\newtheorem{theorem}{Theorem}
\numberwithin{theorem}{section}
\newtheorem*{theorem*}{Theorem}
\newtheorem{definition}[theorem]{Definition}
\newtheorem{cor}[theorem]{Corollary}
\newtheorem{proposition}[theorem]{Proposition}
\newtheorem{lemma}[theorem]{Lemma}
\newtheorem{remark}[theorem]{Remark}
\newtheorem{example}[theorem]{Example}
\newtheorem*{example*}{Example}
\providecommand{\customgenericname}{}
\newcommand{\newcustomtheorem}[2]{%
	\newenvironment{#1}[1]
	{%
		\renewcommand\customgenericname{#2}%
		\renewcommand\theinnercustomgeneric{##1}%
		\innercustomgeneric
	}
	{\endinnercustomgeneric}
}
\tikzset{decision/.style={diamond, draw, text width=4.5em, text badly centered, inner sep=0pt}}
\tikzset{inout/.style={ellipse, draw, text width=7em, text centered, rounded corners,
		minimum width=3.5cm}}
\tikzset{block/.style={rectangle, draw, text width=12em, text centered, rounded corners,
		minimum width=3.5cm}}
\tikzset{block1/.style={rectangle, draw,fill=gray!20, text width=10em, text centered, rounded corners,
		minimum width=3.5cm}}
\tikzset{line/.style={draw, -latex}}
\title{Numerical integration of Schr\"odinger maps\\ via the Hasimoto transform}
\author{Valeria Banica\thanks{valeria.banica@sorbonne-universite.fr} \and Georg Maierhofer\thanks{Corresponding author; georg.maierhofer@sorbonne-universite.fr} \and Katharina Schratz\thanks{katharina.schratz@sorbonne-universite.fr}}
\date{Laboratoire Jacques-Louis Lions (UMR 7598), Sorbonne Universit\'e, UPMC,\\ 4 place Jussieu, 75005\\\ \vspace{0.2cm}\\\today}
\newcommand\corr[1]{\textcolor{black}{#1}}
\newcommand{\dd}{\mathrm{d}}
\renewcommand{\Re}{\mathrm{Re}\,}
\renewcommand{\Im}{\mathrm{Im}\,}
\newcommand{\doublewidetilde}[1]{{%
		\mathpalette\double@widetilde{#1}%
}}
\newcommand{\double@widetilde}[2]{%
	\sbox\z@{$\m@th#1\widetilde{#2}$}%
	\ht\z@=.9\ht\z@
	\widetilde{\box\z@}%
}
\date{}
\begin{document}

\pagenumbering{arabic}
\maketitle
\vspace{-0.75cm}
\begin{abstract}
We introduce a numerical approach to computing the Schr\"odinger map (SM) based on the Hasimoto transform which relates the SM flow to a cubic nonlinear Schr\"odinger (NLS) equation. In exploiting this nonlinear transform we are able to introduce the first fully explicit unconditionally stable symmetric integrators for the SM equation. Our approach consists of two parts: an integration of the NLS equation followed by the numerical evaluation of the Hasimoto transform. Motivated by the desire to study rough solutions to the SM equation, we also introduce a new symmetric low-regularity integrator for the NLS equation. This is combined with our novel fast low-regularity Hasimoto (FLowRH) transform, based on a tailored analysis of the resonance structures in the Magnus expansion and a fast realisation based on block-Toeplitz partitions, to yield an efficient low-regularity integrator for the SM equation. This scheme in particular allows us to obtain approximations to the SM in a more general regime (i.e. under lower regularity assumptions) than previously proposed methods. The favorable properties of our methods are exhibited both in theoretical convergence analysis and in numerical experiments.
\end{abstract}

\tableofcontents

\section{Introduction}
The Sch\"odinger map (SM) equation plays an important role as a model of a wide range of physical phenomena describing  inter alia {the dynamics of the orientation of the magnetization (or spin) in ferromagnetic materials in the isotropic case (as a special case of the Landau--Lifschitz equation \cite{landau1935theory,de2020landau}) and the evolution of the tangent vector field to a vortex filament in an ideal fluid (as the tangent dynamics to the vortex filament equations \cite{buttke1988numerical}).} The 1-D SM equation (for a closed curve) with values in $\mathbb{S}^2$ describes the evolution in time of a vector field $\mathbf{T}(t):\mathbb{T}\rightarrow \mathbb{S}^2$ under the following flow\ \vspace{-0.5cm}\\
\begin{align}\label{eqn:SMEqn}
	\begin{cases}\mathbf{T}_t=\mathbf{T}\wedge \mathbf{T}_{xx},\\
		\mathbf{T}(0,x)=\mathbf{T}_0(x),
	\end{cases}
\end{align}\ \vspace{-0.3cm}\\
{where $\mathbb{T}=\mathbb{R}/(2\pi\mathbb{Z})$ is the torus, i.e. we work with periodic boundary conditions in \eqref{eqn:SMEqn}.} We seek to solve numerically the Cauchy problem for this equation, which has been the subject of a significant amount of prior work. A first approach to this problem was developed by Buttke \cite{buttke1988numerical} introducing a structure preserving implicit finite difference-based scheme which allows the stable integration of the vortex filament equations (VFE) while preserving momentum and energy in the numerical flow. Due to the implicit structure of the scheme a restriction of type $\Delta t\lesssim (\Delta x)^2$ is thereby required to ensure convergence of the fixed point iterates for the solution of the nonlinear time-stepping equations. This was followed by later work by de la Hoz et al. \cite{de2009numerical,de2014vortex,de2022vortex} who used a fully explicit fourth-order Runge--Kutta (RK4) scheme for the SM equation which allows for  convergence at higher order under the stability constraint $\Delta t\lesssim (\Delta x)^2$. 
More recently several papers succeeded in developing unconditionally stable methods{. Amongst} them is work by {E} \& Wang \cite{weinan2001numerical} who developed a semi-implicit Gauss--Seidel projection method which is unconditionally stable for a general Landau--Lifschitz equation (which includes the case of the SM \eqref{eqn:SMEqn}). {Another efficient semi-implicit method} was proposed by \cite{xie2020second} based on the backward differentiation formula. However, as noted by \cite{xie2020second}, such semi-implicit schemes {based on linear multistep methods (in particular the backward differentiation formula) can only be constructed up to second order and for the construction of higher order schemes more sophisticated techniques would be required. This is because there are no linear multistep methods of order greater than two which are A-stable.} Finally, we note that in \cite{de2009numerical} it was also shown that an efficient numerical scheme can be constructed based on coupling a  RK4 method  with the stereographic projection which maps the SM equation to a nonlinear Schr\"odinger equation with rational nonlinearity. The latter approach allows for a weaker CFL condition $\Delta t=\mathcal{O}(\Delta x)$ for stability. 

In this manuscript we take a different route: We  exploit the so-called Hasimoto transform \cite{hasimoto1972soliton} which allows us to relate the SM to a \emph{cubic} NLS equation. {The advantage of the latter lies in the structure of the nonlinearity allowing for improved numerical treatment \cite{besse2002order,lubich2008splitting,thalhammer2008high,jin_markowich_sparber_2011,faou2012geometric,koch2013error} and, in particular, efficient numerical approximations at low regularity \cite{ostermann2018low,bruned2022resonance,maierhoferschratz23,fengmaierhoferschratz23,bronsard2022error}.}  In recent years this Hasimoto transform has been successfully applied to the theoretical study of rough solutions to the vortex filament equations in a sequence of papers by Banica \& Vega \cite{banica2009stability,banica2013,banica2020evolution} and de la Hoz \& Vega \cite{de2014vortex}, but so far it has not been exploited for computations. In this manuscript we aim to close this gap, with a particular interest set on low-regularity approximations to the SM which play an important role for example in the simulation of domain-wall states in spin chains (cf. \cite{gamayun2019domain,misguich2019domain}).
In order to achieve this, we introduce a new symmetric low-regularity integrator for the cubic NLSE as well as a  novel fast low-regularity Hasimoto (FLowRH) transform, based on a tailored analysis of the resonance structures in the Magnus expansion and a fast realisation based on block-Toeplitz partitions. {This new method allows us to obtain approximations to the SM equation under weaker regularity assumptions {than previously proposed methods.}

{Indeed as we will see below this new integrator is the first one to be able to provide guaranteed convergence for solutions with data $T_0\in H^{5/2+}$} and as such is the first available numerical scheme for the SM equation which can treat solutions of such low regularity. It turns out that our method also performs very well for solutions with much lower regularity (cf. the example given in section \ref{sec:application_to_VFE}). This is important because so far there was a large gap between theoretical existence results of solutions and numerical algorithms which can compute these solutions in practice. {The first rigorous studies of solutions to \eqref{eqn:SMEqn} were provided by Zhou and Guo \cite{ZhouGuo1984} and Sulem et al. \cite{sulem1986continuous}. They proved existence of weak solutions in $H^1$ (actually these results were stated for \eqref{eqn:SMEqn} posed on $\mathbb{R}$ but they can be extended to periodic boundary conditions, i.e. \eqref{eqn:SMEqn} posed on $\mathbb{T}$, see section~1.2 in \cite{jerrard_smets_2012}). Amongst further notable contributions on the subject is work by Nishiyama \& Tani \cite{nishiyama1994solvability,nishiyama1996initial,tani1997solvability} who proved existence and uniqueness of solutions for initial conditions in $H^4$. The latest global existence and uniqueness results are valid for solutions in $L^{\infty}(H^k),k\geq 2$ and were proved in a sequence of papers by Ding and Wang \cite{dingwang2001} (see also Chang et al. \cite{chang2000k} and Nahmod et al. \cite{nahmod2007schrodinger}, and Rodnianski et al. \cite{rodnianski2009global} for a more recent self-contained proof). A different approach to uniqueness in $H^2$ was provided by McGahagan \cite{mcgahagan2007approximation,mcgahagan2004some}. Further details about prior theoretical studies of solutions to \eqref{eqn:SMEqn} can be found in to \cite{jerrard_smets_2012} and the references therein.} {Finally, for initial conditions at least in $H^2$ the flow map of the SME is continuous in $H^1$ \cite[p.~642]{jerrard_smets_2012}.}

{The present numerical scheme described in section~\ref{sec:low_regularity_integrator} thus operates right at the edge of current existence results and provides simulations in practice {for regularity assumptions close to the limit of $H^1$ data needed in current existence results} (cf. section~\ref{sec:application_to_VFE}), thus improving significantly on previous {numerical methods}.}

{Additionally,} based on the Hasimoto transform we introduce in the smooth setting a wide class of unconditionally stable explicit symmetric integrators which {can be designed for any desired order of convergence} under sufficient regularity assumptions. {This means that we overcome any CFL-type condition (cf. \cite{buttke1988numerical,de2009numerical}) and semi-implicit nature and order restrictions (cf. \cite{weinan2001numerical,xie2020second}) of prior work} {while at the same time allowing for better preservation of the underlying geometric structure of the equation.}

{Our theoretical results are confirmed and underlined in detailed numerical experiments in section~\ref{sec:numerical_experiments} where we observe that our fully explicit methods provide a significant performance improvement over prior work (in particular the state-of-the-art by Xie et al. \cite{xie2020second}) and that our low-regularity integrator for the SM equation is able to perform very well even in regimes where classical methods are observed to fail (rougher initial conditions). Both new schemes are also observed to lead to good energy preservation properties.}

\vspace{-0.2cm}
\section{The Hasimoto transform}\label{sec:hasimoto_transform}
\vspace{-0.1cm}
Our novel approach for the numerical study of the solution of \eqref{eqn:SMEqn} is to exploit the Hasimoto transform \cite{hasimoto1972soliton}. This transform has been successfully exploited in analytical studies of solutions to \eqref{eqn:SMEqn} (see for example \cite{hasimoto1972soliton,Calini_2011,banica2009stability,de2014vortex,banica2013,banica2020evolution,LAFORTUNE2013766}) but, to the best of our knowledge, this has not been used in numerical analysis literature yet. {Hasimoto \cite{hasimoto1972soliton} showed that the solution $\mathbf{T}(t): \mathbb{T}\rightarrow \mathbb{S}^2$ of \eqref{eqn:SMEqn} can be found via the following process from a given initial condition $\mathbf{T}_0$.} {In the interest of clarity we present the Hasimoto transform here using the Frenet frame, but we note that it is possible to use instead the parallel frame described below from a given $\mathbf{T}_0$ (cf. \cite[Theorem~1.3]{koiso97} and also \cite{koiso1996vortex,tani1997solvability,banica2013stability}) thus avoiding issues related to vanishing curvature arising in the Frenet-based construction. We denote by} $\kappa,\tau$ the `curvature' and `torsion' corresponding to $\mathbf{T}$ (this terminology arises from the interpretation of $\mathbf{T}$ as the tangent vector field to a closed vortex filament, cf. \cite{hasimoto1972soliton} and see also the example in section \ref{sec:application_to_VFE}). These satisfy
\begin{align}\label{eqn:computation_curvature_torsion}
	\kappa(t,x)=\left\|\mathbf{T}_x(t,x)\right\|_{\ell^2},\quad
	\kappa^2\tau(t,x)=-\langle\mathbf{T}_{xx}(t,x)\times \mathbf{T}(t,x), \mathbf{T}_x(t,x)\rangle_{\ell^2},
\end{align}
where $\|\cdot\|_{\ell^2}$ denotes the standard Euclidean norm on $\mathbb{R}^3$. {Suppose for the time being that the curvature does not vanish ($\kappa\neq 0$).} Firstly, we compute $\kappa_0$ and $\tau_0$ {(the initial values for $\kappa$ and $\tau$)} from \eqref{eqn:computation_curvature_torsion} and let $\mathbf{N}_0:=\kappa_0^{-1}\partial_x\mathbf{T}_0$, $\mathbf{B}_0:=\mathbf{T}_0\times \mathbf{N}_0$. Then we define an initial orthonormal frame $\mathbf{T}_0,\mathbf{e}_{1,0},\mathbf{e}_{2,0}:\mathbb{T}\rightarrow \mathbb{S}^2$ by taking
\begin{align*}
    \mathbf{e}_{1,0}(x)&:=\cos(\tau_0(x))\mathbf{N}_0(x)-\sin(\tau_0(x))\mathbf{B}_0(x),\quad   \mathbf{e}_{2,0}(x):=\sin(\tau_0(x))\mathbf{N}_0(x)+\cos(\tau_0(x))\mathbf{B}_0(x).
\end{align*}
Finally, the value of $\mathbf{T}(t,x)$ is found by solving, for $t\in [0,T]$,
\begin{align}\label{eqn:temporal_hasimoto_transform}
	\frac{\partial}{\partial t}\begin{pmatrix}
\mathbf{T}\\
\mathbf{e}_1\\
\mathbf{e}_2
\end{pmatrix}(t,x)=\mathsf{A}(t,x)\begin{pmatrix}
\mathbf{T}\\
\mathbf{e}_1\\
\mathbf{e}_2
\end{pmatrix}(t,x),\,\text{where\ }\mathsf{A}(t,x)=\begin{pmatrix}
	0 & -\Im u_x & \Re u_x \\ \Im u_x & 0 & -\frac{|u |^2}{2} \\ -\Re u_x &  \frac{|u |^2}{2}& 0
\end{pmatrix}(t,x),
\end{align}
and where $u$ is the solution to the cubic nonlinear Schr\"odinger equation
\begin{align}\label{eqn:NLS_in_hasimoto_transform}
\begin{cases}	iu _t+u _{xx}+\frac 1 2|u |^2u =0,& (t,x)\in\mathbb{R}^+\times \mathbb{T},\\
	u(0,x)=\kappa_0(x)\exp(i\int_0^x \tau_0(\tilde{x})\dd \tilde{x}),
	\end{cases}
\end{align}
{where, as above, $\mathbb{T}=\mathbb{R}/(2\pi\mathbb{Z})$ is the torus meaning we consider periodic boundary conditions. Note, if $\mathbf{T}_0$ is not the tangent vector field of a closed curve we have to include an additional factor in the initial condition for $u$ in order to ensure its periodicity, $u(0,x)=\kappa_0(x)\exp(i\int_{0}^x\tau_0(\tilde{x})\dd\tilde{x}-i\int_0^{2\pi}\tau_0(\tilde{x})\dd\tilde{x})$.} If we think of $\mathbf{T},\mathbf{e}_1,\mathbf{e}_2$ as row-vectors then \eqref{eqn:temporal_hasimoto_transform} can also be written as an evolution of a $3\times 3$ matrix:
\begin{align}\label{eqn:3x3_matrix_formulation_temporal_hasimoto}
\mathsf{y}_t(t,x)=\mathsf{A}(t,x)\mathsf{y}(t,x),\quad 
	\mathsf{y}(t,x):=\begin{pmatrix}
		\mathbf{T}\\
		\mathbf{e}_1\\
		\mathbf{e}_2
	\end{pmatrix}(t,x)
\end{align}
{The frame $\{\mathbf{T},\mathbf{e}_{1},\mathbf{e}_2\}$ is called the `parallel frame' because one can compute that the variations $\partial_x\mathbf{e}_{1}, \partial_x\mathbf{e}_{2}$ are only in the $\mathbf{T}$-direction. This frame can be constructed even when the curvature vanishes (cf. \cite[Theorem 1.3]{koiso97}). For instance, in the case when $\mathbf{T}_0$ is flat (i.e. there is a fixed $\mathbf{b}\in\mathbb{S}^2$ such that $\mathbf{T}_0(x)\cdot\mathbf{b}=0$ for all $x\in\mathbb{T}$) we can define $\tau_0:=0$, take $\mathbf{B}_0:=\mathbf{b}$ and $\mathbf{N}_0:=\mathbf{B}_0\times\mathbf{T}_0$ and proceed as above to constructing the initial parallel frame $\mathbf{e}_{1,0},\mathbf{e}_{2,0}$ before integrating the temporal evolution \eqref{eqn:3x3_matrix_formulation_temporal_hasimoto}. }

\subsection{Properties of the SM equation}\label{sec:properties_SM_equation}
We note that the Hasimoto transform provides a useful tool for understanding the theoretical properties of the SM equation. Firstly, it allows us to prove existence results such as the following: if $\mathbf{T}_0$ is smooth then we have existence and uniqueness of smooth global solutions to \eqref{eqn:SMEqn}. This result is a standard consequence of existence of solutions to NLS and Cauchy--Lipschitz applied to \eqref{eqn:temporal_hasimoto_transform}. As recalled in the introduction the Cauchy well-posedness theory for SM was intensively studied over the recent decades.

Secondly, the Hasimoto transform immediately allows us to relate conserved quantities from the NLS flow directly to conservation laws for the SM equation. One example is the conservation of the mass $\int_{\mathbb{T}}|u|^2\dd x$ in the NLS, which translates to conservation of energy $\mathcal{E}$ of the SM. {Two important conserved quantities of the SM equation} (cf. \cite[Lemma~A.1]{jerrard_smets_2012}) are given by
\begin{align}\label{eqn:definition_conserved_quantities_SM}
	\mathcal{E}(t)=\int_{\mathbb{T}}\|\mathbf{T}_x(t,x)\|_{\ell^2}^2\dd x,\quad \mathcal{I}(t)=\int_{\mathbb{T}}\|\mathbf{T}_t(t,x)\|_{\ell^2}^2+\|\mathbf{T}_{xx}(t,x)\|_{\ell^2}^2{-\frac{3}{4}}\|\mathbf{T}_x(t,x)\|_{\ell^2}^4\dd x.
\end{align}
{The preservation of these two constants of motion is particularly important since they control the first two spatial derivatives of the solution $\mathbf{T}$ (cf. \cite[Appendix~A]{jerrard_smets_2012}), meaning that good preservation of $\mathcal{E},\mathcal{I}$ in the numerical flow helps to avoid spurious blow-up arising from numerical instability in simulations.} {We will use these conserved quantities as a benchmark for the structure preservation properties of our symmetric numerical schemes (cf. section~\ref{sec:numerical_experiments}).}

\section{The smooth setting: Explicit unconditionally stable {SM integrators}}\label{sec:CFL_free_high_regularity}

In this section we will introduce a large class of unconditionally stable fully explicit symmetric numerical methods for the SM equation in the \emph{smooth setting}. Through \eqref{eqn:temporal_hasimoto_transform} the bulk of the challenges in the nonlinearity is pushed into the NLS part of the equation. This system is well-studied and a number of efficient techniques exist for its solution {(cf. \cite{hochbruck_ostermann_2010,mclachlan_quispel_2002,jin_markowich_sparber_2011})}. In the following we will particularly focus on the integration of the temporal Hasimoto transform \eqref{eqn:temporal_hasimoto_transform} meaning we are given the complete initial data $\mathbf{T}(0,x),\mathbf{e}_1(0,x),\mathbf{e}_2(0,x), u(0,x), x\in\mathbb{T}.$ Motivated by \cite[Chapter~XI]{hairer2013geometric} \& \cite{faou2004energy} we seek to construct symmetric numerical schemes, which promise not only stability but also good long-time preservation of actions of the SM, including the energy $\mathcal{E}$ and $\mathcal{I}$ as defined in \eqref{eqn:definition_conserved_quantities_SM}. A symmetric method preserves the time-reversibility of the exact flow of the equation. In particular if we denote by $\Phi_{m\to m+1}: \mathbf{T}_{m}\mapsto\mathbf{T}_{m+1}$ the (nonlinear) map corresponding to our time-discretisation, we have the following standard definition:

\begin{definition}[{See for example \cite{hairer2013geometric}}]\label{def:time-symmetric_method} The method $\Phi_{m\to m+1}$ is called \textit{symmetric} if $\Phi_{m\to m+1}=\Phi_{m+1\to m}^{-1}$.
\end{definition}

Throughout the present work we consider a spectral collocation method as the spatial discretisation and our main focus will be to develop the semi-discretisation in time.
\subsection{Symmetric splitting methods for the NLS}\label{sec:splitting_for_nls}
The first step in constructing such a symmetric method is to decide on an integrator for the NLS equation. Splitting methods \cite{mclachlan_quispel_2002} are a good choice because for the cubic NLS we can split the nonlinear part from the dispersive part in the system and efficiently integrate both in turns. In particular letting\\
\begin{minipage}{0.495\textwidth}\centering\begin{align*}
	(P1)\begin{cases}
		u_t(t,x)=iu_{xx}(t,x),\\
		u(0,x)=u_1(x),
	\end{cases}
\end{align*}
\end{minipage}
\begin{minipage}{0.495\textwidth}\centering
\begin{align*}
	(P2)\begin{cases}u_t(t,x)=\frac{i}{2}|u(t,x)|^2u(t,x),\\
	u(0,x)=u_2(x).
	\end{cases}
\end{align*}\end{minipage}
we know that the solution to (P1) is given by $u(t,x):=\varphi^{(P1)}_t(u_1(x))=\exp(it\partial_x^2)u_1(x)$, which can be computed efficiently for a spectral discretisation in $\mathcal{O}(N\log N)$ operations using the FFT. Moreover the exact solution to (P2) is given by $u(t,x):=\varphi^{(P2)}_t(u_2(x))=\exp(it|u_2(x)|^2/2)u_2(x),$ which again is highly efficient because it requires only diagonal operations on the discrete grid of function values i.e.\ only $\mathcal{O}(N)$ operations. If we let $D_\Delta$ and $D_N$ be the Lie derivatives (cf. for instance \cite[Appendix~A.1]{koch2013error}) of $\varphi^{(P1)}_t,\varphi^{(P2)}_t$ respectively, a general splitting method takes the form
\begin{align}\label{eqn:general_form_splitting_method}
	u_{m+1}=e^{a_1h D_\Delta}e^{b_1h D_{N}}\cdots e^{a_Sh D_{\Delta}}e^{b_Sh D_{N}}u_{m},
\end{align}
{where $u_m,u_{m+1}$ correspond to the approximations of $u(t_m),u(t_{m+1})$ at given times $t_m,t_{m+1}\in\mathbb{R}_{\geq 0}$ with $t_{m+1}>t_m$, $h=t_{m+1}-t_m$ is the time step (whose potential dependency on $m$ we suppress in the above formula for ease of notation), and where} $a_s, b_s\in \mathbb{R},\, 1\leq s\leq S, S\in\mathbb{N},$ are constants that do not depend on $m$. We shall see in Thm.~\ref{thm:convergence_thm_splitting_NLS} below that for any given $p\in\mathbb{N}$ we can find algebraic conditions on $a_s,b_s, 1\leq s\leq S,$ which ensure that the method is convergent at order $p$, directly in the full unbounded operator setting. This means that splitting methods are unconditionally stable and convergent. Moreover, the form \eqref{eqn:general_form_splitting_method} facilitates an obvious characterization of symmetric splitting methods (cf. \cite{hairer2013geometric,blanes2013optimized}):
\begin{lemma}
	Suppose one of $a_1,b_1$ and one of $a_S,b_S$ is nonzero. Then the method defined by \eqref{eqn:general_form_splitting_method} is symmetric if and only if one of the following two cases holds
	\begin{itemize}
		\item $a_1\neq 0$, $b_S=0$ and $a_{S-s}=a_s, b_{S-1-s}=b_{s}$ for all $1\leq s\leq S-1$;
		\item $b_1\neq 0$, $a_S=0$ and $b_{S-s}=b_s, a_{S-1-s}=a_{s}$ for all $1\leq s\leq S-1$.
	\end{itemize}
\end{lemma}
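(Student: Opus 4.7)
The plan is to reformulate the symmetry condition as an identity between two products of exponentials and then extract coefficient relations from it. First I would compute $\Phi_{m+1\to m}^{-1}$ by reversing the order of the $2S$ exponentials in \eqref{eqn:general_form_splitting_method} and negating each exponent; at the same time $\Phi_{m+1\to m}$, viewed as the map with negative step, is obtained by replacing $h$ with $-h$ in \eqref{eqn:general_form_splitting_method}. Setting these equal (and for convenience flipping the sign of $h$ on both sides), the symmetry condition becomes
\begin{equation*}
e^{-b_Sh D_N}e^{-a_Sh D_\Delta}\cdots e^{-b_1h D_N}e^{-a_1h D_\Delta}=e^{-a_1h D_\Delta}e^{-b_1h D_N}\cdots e^{-a_Sh D_\Delta}e^{-b_Sh D_N}.
\end{equation*}

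The next step is to analyse the alternating structure. The left-hand side reads $N,\Delta,N,\Delta,\ldots,N,\Delta$ in its exponents, while the right-hand side reads the opposite word $\Delta,N,\Delta,N,\ldots,\Delta,N$. Since $D_\Delta$ and $D_N$ do not commute, two such products can only agree for all small $h$ when, after discarding any identity factors, the underlying words and their coefficients match. The only way to reconcile the differing starting/ending operators is therefore to kill one of the endpoint factors: either $b_S=0$, in which case the leftmost factor of the left-hand side and the rightmost factor of the right-hand side both degenerate to the identity, or $a_1=0$, in which case the rightmost factor of the left-hand side and the leftmost factor of the right-hand side both become the identity. The hypothesis that one of $a_1,b_1$ and one of $a_S,b_S$ is nonzero prevents both endpoints from collapsing simultaneously and, combined with $b_S=0$, forces $a_1\neq 0$ (and dually, $a_1=0$ forces $b_1\neq 0$); this produces the two stated cases.

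Once the endpoint factor vanishes, the remaining $2S-1$ alternating exponentials on each side must match term by term. Reading positions from the centre outwards then yields exactly the palindromic identities on the $a_s$ and $b_s$ listed in the lemma in each of the two cases. The converse implication is immediate: under those relations the two sides of the displayed equation are literally the same word of exponentials, hence equal as operators, so the method is symmetric.

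The only point that genuinely requires argument is the rigidity claim used above --- that two products of exponentials in the two generators $D_\Delta,D_N$ can agree for all small $h$ only if their underlying words and coefficients coincide. I would justify this by expanding both sides in $h$ and comparing order by order via the Baker--Campbell--Hausdorff formula: the order-$h$ terms match automatically, since each side reduces to $-h\sum_s(a_sD_\Delta+b_sD_N)$, but at order $h^2$ and beyond the cross-terms produce nested brackets in $D_\Delta,D_N$ whose coefficients differ between the two sides unless the alignment described above already holds. Linear independence of iterated commutators in the free Lie algebra on two generators, applied to the splitting operators in question (whose non-commutativity is immediate from \eqref{eqn:NLS_in_hasimoto_transform}), completes the argument. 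This rigidity is the main obstacle and is the delicate ingredient that turns the otherwise purely combinatorial word-matching into a rigorous proof.
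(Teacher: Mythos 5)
The paper offers no proof of this lemma at all---it is introduced as an ``obvious characterization'' with a pointer to \cite{hairer2013geometric,blanes2013optimized}---so there is no in-paper argument to compare against. Your proof follows the standard route of those references: compute the adjoint $(\Phi_{-h})^{-1}$ by reversing the word of exponentials in \eqref{eqn:general_form_splitting_method} and flipping the sign of $h$, then match the two alternating words. In essence this is correct; the ``if'' direction is indeed immediate, and the endpoint analysis correctly isolates the two cases.

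Three caveats, all of which you should address explicitly. First, if you actually carry out the centre-outward matching in the case $b_S=0$ you obtain $a_s=a_{S+1-s}$ for $1\le s\le S$ and $b_s=b_{S-s}$ for $1\le s\le S-1$, which is shifted by one relative to the indices printed in the lemma (as printed, $b_{S-1-s}=b_s$ at $s=S-1$ involves the undefined $b_0$, and Strang splitting with $S=2$ fails the printed condition); this is evidence of a typo in the statement, but you assert without verification that your matching ``yields exactly the identities listed'', which it does not literally. Second, your necessity argument tacitly assumes the \emph{interior} coefficients are nonzero: if some $a_s$ or $b_s$ with $1<s<S$ vanishes, adjacent exponentials of the same generator merge, the reduced word shortens, and a symmetric method need not satisfy the palindrome at the level of the original labelling --- e.g.\ $(a_1,b_1,a_2,b_2,a_3,b_3)=(\tfrac12,0,0,1,\tfrac12,0)$ is Strang splitting in disguise and is symmetric, yet violates the stated relations. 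Third, the rigidity you rightly flag as the crux (equality of the two operator products forces equality of the reduced words) is clean only in the free Lie algebra, i.e.\ when the lemma is read as a statement about splitting \emph{schemes} valid for arbitrary pairs of non-commuting flows; for the specific operators $D_\Delta,D_N$ of the cubic NLS, linear independence of the relevant iterated commutators is more than ``immediate from non-commutativity'' and you do not actually establish it. Since all three issues equally afflict the lemma as stated, and the intended reading is clearly the formal one, your argument is acceptable in substance, but the index discrepancy and the interior-zero degeneracy should be fixed rather than glossed over.
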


\begin{example}[Strang splitting, cf. \cite{strang1968construction}] An obvious example is the Strang splitting for the nonlinear Schr\"odinger equation \eqref{eqn:NLS_in_hasimoto_transform} which takes one of two forms
	\begin{align*}
		u_{m+1}=\varphi^{(P1)}_{\frac{h}{2}}\circ\varphi^{(P2)}_{h}\circ\varphi^{(P1)}_{\frac{h}{2}}(u_m),\quad\text{or\quad}u_{m+1}=\varphi^{(P2)}_{\frac{h}{2}}\circ\varphi^{(P1)}_{h}\circ\varphi^{(P2)}_{\frac{h}{2}}(u_m).
	\end{align*}
\end{example}

\subsubsection*{Recall of previous convergence results}
We quickly recap a central result which will be useful for our analysis in section~\ref{sec:convergence_analysis_cfl_high_regularity_integrators}. The result was first proved for the cubic nonlinear Schr\"odinger equation by \cite{lubich2008splitting} for Strang splitting, and then stated by \cite{koch2013error} for splitting methods of arbitrary order. To begin with we note that replacing $D_{\Delta},D_{N}$ with bounded operators one can, using Taylor series expansions, easily find algebraic conditions which ensure that the splitting method \eqref{eqn:general_form_splitting_method} is convergent of order $p$. We call this the \textit{non-stiff order} of the splitting method. Then it turns out that for the nonlinear Schr\"odinger equation this order is preserved also in the stiff case:

\begin{theorem}[{Thm.~3.5 in \cite{koch2013error} extending on Thm.~7.1 from \cite{lubich2008splitting}}]\label{thm:convergence_thm_splitting_NLS} If $u:[0,T]\times \mathbb{T}\rightarrow \mathbb{C}$ is the solution to \eqref{eqn:NLS_in_hasimoto_transform}, {with $u\in\mathcal{C}^{0}(0,T;H^{2p+1}(\mathbb{T}))$,} then for any exponential operator splitting method \eqref{eqn:general_form_splitting_method} with \textit{non-stiff order} $p$ {and uniform time steps, $t_{m+1}-t_{m}=h\,\, \forall 0\leq m\leq \lfloor T/h\rfloor,$} we have that 
\begin{align*}
	\left\|u_m-u(t_m)\right\|_{H^1}\leq Ch^{p}, \quad \forall\, t_m\leq T,
\end{align*}
where the constant $C>0$ depends on $T$ and $\sup_{t\in[0,T]}\|u(t)\|_{H^{2p+1}}$.
\end{theorem}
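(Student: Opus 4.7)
The plan is to follow the classical Lady Windermere's fan strategy underlying the Lubich/Koch--Thalhammer--Lubich analysis: bound the local error of one splitting step in $H^1$ by $O(h^{p+1})$ (using regularity up to $H^{2p+1}$), show stability of the numerical flow in $H^1$ on bounded sets, and conclude by telescoping. Since the statement merely cites the existing result, my aim is to recall the essential steps rather than reinvent them.

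For the local error, I would expand the product $e^{a_1 h D_\Delta}e^{b_1 h D_N}\cdots e^{a_S h D_\Delta}e^{b_S h D_N}$ using the formal calculus of Lie derivatives (equivalently, iterated Baker--Campbell--Hausdorff / Magnus-type expansions) and compare it with the exact flow $e^{h(D_\Delta + D_N)}$. The hypothesis that the scheme has non-stiff order $p$ means exactly that all terms up to and including $h^p$ cancel formally, so that the leading discrepancy is a linear combination of $(p+1)$-fold iterated commutators of $D_\Delta$ and $D_N$. The crucial observation is that each $D_\Delta$ contributes at most two derivatives when commuted with a multiplication operator of the form $D_N \sim |u|^2 u$, and that the leading iterated commutators contain $p$ copies of $D_\Delta$. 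Combined with a Sobolev product estimate in one space dimension (using $H^1(\mathbb{T})\hookrightarrow L^\infty(\mathbb{T})$), this yields the bound $\|\text{local error}\|_{H^1}\lesssim h^{p+1}\|u\|_{H^{2p+1}}^{\,q}$ for some $q$ depending on $p$. The rigorous justification that the formal BCH remainder is genuinely $O(h^{p+2})$ in $H^1$ is handled by repeated variation-of-constants applied to the difference of the splitting and the exact flow, exactly as in \cite{lubich2008splitting,koch2013error}.

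For stability, I would use that $\varphi^{(P1)}_t=e^{it\partial_x^2}$ is a unitary group on every $H^s(\mathbb{T})$ and that $\varphi^{(P2)}_t(v)=e^{it|v|^2/2}v$ is locally Lipschitz on $H^1(\mathbb{T})$ with Lipschitz constant $1+Ct(\|v\|_{H^1}+\|w\|_{H^1})^2$ (using again $H^1\hookrightarrow L^\infty$ in one dimension). Composing these Lipschitz bounds over the $2S$ substeps of \eqref{eqn:general_form_splitting_method} yields a one-step Lipschitz estimate $\|\Phi_h(v)-\Phi_h(w)\|_{H^1}\le (1+Ch)\|v-w\|_{H^1}$ on an $H^1$-ball of fixed radius. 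Combining this with the local error bound in a standard telescoping argument converts $O(h^{p+1})$ per step into $O(h^p)$ on $[0,T]$, which is the stated result.

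The main obstacle is the sharp derivative accounting in the commutator expansion: one must show that no term in the order-$(p+1)$ remainder demands more than $2p+1$ derivatives of $u$, otherwise the regularity assumption would not suffice. This is an algebraic/combinatorial check on nested commutators of $\partial_x^2$ with the nonlinear multiplication operator and is the core technical content of \cite{lubich2008splitting} (for $p=2$) and its extension \cite{koch2013error} (for arbitrary $p$). A secondary but necessary ingredient is an a priori bound of the numerical iterates $u_m$ in a ball of $H^{2p+1}$ for $0\le t_m\le T$, obtained by running the same stability argument in the higher Sobolev norm and invoking a bootstrap on $h$ small enough. Since all these steps are already established in the cited literature, the theorem follows by direct specialisation to \eqref{eqn:NLS_in_hasimoto_transform}.
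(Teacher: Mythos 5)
The paper does not prove this theorem at all: it is quoted directly from the cited references (Thm.~3.5 of Koch et al.\ extending Thm.~7.1 of Lubich), so the paper's only ``proof'' is the citation itself. Your sketch faithfully reproduces the strategy of those sources --- an $O(h^{p+1})$ local error in $H^1$ obtained by comparing the splitting with the exact flow and counting derivatives in the nested commutators of $\partial_x^2$ with the cubic multiplication operator (which is exactly why $H^{2p+1}$ regularity is needed), $H^1$-stability from unitarity of $e^{it\partial_x^2}$ plus local Lipschitz bounds on the nonlinear substep together with a priori bounds on the iterates, and a Lady Windermere's fan --- including the correct caveat that the formal BCH cancellation must be justified by iterated variation-of-constants rather than applied literally to unbounded operators, so it is consistent with the argument the paper relies on.
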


In an analogous manner a similar convergence result can be shown for quasi-uniform time steps. In the following statement we will use the standard expression $\rho$-quasiuniform, $0<\rho<1$, for a sequence $(a_m)_{1\leq m\leq M}$ to mean that $\min_{1\leq m\leq M} |a_m|>\rho \max_{1\leq m\leq M} |a_m|$.

\begin{cor}\label{cor:convergence_thm_splitting_NLS_variable_time_step} Let $u$ be as above{, i.e. it is the unique solution $u\in\mathcal{C}^0(0,T;H^{2p+1}(\mathbb{T}))$ to \eqref{eqn:NLS_in_hasimoto_transform},} and consider the exponential operator splitting method \eqref{eqn:general_form_splitting_method} with \textit{non-stiff order} $p$ but applied with variable {step sizes} $h_m=t_{m+1}-t_m>0$ such that $T=\sum_{m=1}^M h_m$. Let $\rho\in(0,1)$, then there is a constant $C$ depending on $T, \sup_{t\in\mathbb{T}}\|u\|_{H^{{2p+1}}}$ and $\rho$, such that for every $\rho$-quasiuniform sequence of time steps we have
	\begin{align*}
		\left\|u_m-u(t_m)\right\|_{{H^1}}\leq C\left(\max_{1\leq m\leq M}h_m\right)^p, \quad \forall\, 1\leq m\leq M.
	\end{align*}
\end{cor}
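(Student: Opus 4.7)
The plan is to reduce the variable step-size statement to the local error / stability framework that already underlies Theorem~\ref{thm:convergence_thm_splitting_NLS} in the uniform case, and then use $\rho$-quasiuniformity to convert a sum of local errors into a bound in the maximum step size.

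First I would revisit the proof of Theorem~\ref{thm:convergence_thm_splitting_NLS} (as given in Koch--Lubich / Lubich--Thalhammer) and extract two ingredients that are actually proved step by step, not globally:
\begin{itemize}
\item[(i)] a \emph{local error} bound stating that, along the exact trajectory, one step of the splitting of size $h_m>0$ satisfies
\begin{equation*}
\bigl\|\Phi_{h_m}(u(t_m))-\varphi^{\mathrm{NLS}}_{h_m}(u(t_m))\bigr\|_{H^1}\le C_{\mathrm{loc}}\,h_m^{p+1},
\end{equation*}
where $C_{\mathrm{loc}}$ depends only on $\sup_{t\in[0,T]}\|u(t)\|_{H^{2p+1}}$ (and on the coefficients $a_s,b_s$), but \emph{not} on the step size being uniform;
\item[(ii)] a \emph{stability} estimate for the numerical flow in $H^1$ on a neighbourhood of the exact trajectory, of the form $\|\Phi_{h_m}(v)-\Phi_{h_m}(w)\|_{H^1}\le (1+C_{\mathrm{st}}h_m)\|v-w\|_{H^1}$, again with $C_{\mathrm{st}}$ depending only on $\sup_{t\in[0,T]}\|u(t)\|_{H^{2p+1}}$.
\end{itemize}
Both (i) and (ii) are precisely what is used in the proof of Theorem~\ref{thm:convergence_thm_splitting_NLS}; they make no use of the $h_m$ being equal. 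The main technical verification here is to reread the Magnus/Taylor expansion estimates of \cite{lubich2008splitting,koch2013error} and check that the constants factor out step sizes cleanly. This is the part that requires the most care, but it is essentially a bookkeeping exercise on the already existing proof.

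Next I would perform the standard Lady Windermere's fan decomposition
\begin{equation*}
u_m-u(t_m)=\sum_{j=0}^{m-1}\Phi_{h_{m-1}}\circ\cdots\circ\Phi_{h_{j+1}}\Bigl[\Phi_{h_j}(u(t_j))-\varphi^{\mathrm{NLS}}_{h_j}(u(t_j))\Bigr].
\end{equation*}
Applying (ii) repeatedly to propagate each local error to the final time, and using $\prod_{k=j+1}^{m-1}(1+C_{\mathrm{st}}h_k)\le \exp(C_{\mathrm{st}} T)$, together with (i), yields
\begin{equation*}
\|u_m-u(t_m)\|_{H^1}\le C_{\mathrm{loc}}\,e^{C_{\mathrm{st}}T}\sum_{j=0}^{m-1}h_j^{p+1}.
\end{equation*}

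Finally I would invoke $\rho$-quasiuniformity: writing $h_{\max}:=\max_j h_j$ and $h_{\min}:=\min_j h_j$, we have $h_{\min}\ge \rho\,h_{\max}$ and $M h_{\min}\le T$, hence $M\le T/(\rho h_{\max})$. Therefore
\begin{equation*}
\sum_{j=0}^{m-1}h_j^{p+1}\le M\,h_{\max}^{p+1}\le \frac{T}{\rho}\,h_{\max}^{p},
\end{equation*}
which gives the claimed bound with constant $C=C_{\mathrm{loc}}\,e^{C_{\mathrm{st}}T}\,T/\rho$. The only genuine obstacle in this plan is confirming the step-size-agnostic form of the local and stability estimates in (i) and (ii); once those are in hand, the remainder is a direct telescoping argument combined with the elementary counting estimate above.
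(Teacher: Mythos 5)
Your proposal is correct and follows exactly the route the paper intends: the paper gives no detailed proof of this corollary, stating only that it follows ``in an analogous manner'' to Theorem~\ref{thm:convergence_thm_splitting_NLS}, and your argument --- step-size-agnostic local error plus stability, Lady Windermere's fan, and the counting bound $M\le T/(\rho\,h_{\max})$ from $\rho$-quasiuniformity --- is precisely the standard fleshing-out of that remark. The one point to keep in mind when executing it is that the $H^1$-stability of the nonlinear substep holds only on bounded sets, so the telescoping must be accompanied by the usual bootstrap keeping the iterates near the exact trajectory, which you implicitly acknowledge.
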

\subsection{Time integration of the Hasimoto transform}\label{sec:time_integration_hasimoto_transform}
Having solved the NLS \eqref{eqn:NLS_in_hasimoto_transform} numerically we turn to the time integration of the linear matrix ODE \eqref{eqn:3x3_matrix_formulation_temporal_hasimoto}. If $u\in\mathcal{C}([0,T];H^{1})$ then, for almost every value of $x\in\mathbb{T}$, the matrix function $t\mapsto \mathsf{A}(t,x)$ is uniformly bounded on compacta thus we can express (see \cite[\S~4.1]{iserles_munthe-kaas_norsett_zanna_2000} and \cite{magnus1954exponential}) the solution to this system as $\mathsf{y}(t,x)=\exp\left(\mathsf{B}(t,x)\right)\mathsf{y}(0,x)$ where $\mathsf{B}: \mathbb{R}_+\times \mathbb{T}\rightarrow \mathbb{R}^{3\times 3}$ is given by its Magnus expansion $\mathsf{B}(t,x)=\sum_{k=1}^\infty \mathsf{H}_k(t,x)$ (which holds true for almost every $x$ pointwise), with
\begin{align*}
\mathsf{H}_1(t,x)&=\int_0^t\mathsf{A}(\xi_1)\dd \xi_1,\quad	\mathsf{H}_2(t,x)=\frac{1}{2}\int_0^t\int_0^{\xi_1}\left[\mathsf{A}(\xi_1),\mathsf{A}(\xi_2)\right]\dd\xi_2\dd \xi_1,\\
	\mathsf{H}_3(t,x)&=\frac{1}{6}\int_0^h\int_{0}^{\xi_1}\int_0^{\xi_2}\left(\left[\mathsf{A}(\xi_1),\left[\mathsf{A}(\xi_2),\mathsf{A}(\xi_3)\right]\right]+\left[\mathsf{A}(\xi_3),\left[\mathsf{A}(\xi_2),\mathsf{A}(\xi_1)\right]\right]\right)\dd\xi_3\dd\xi_2\dd \xi_1,
\end{align*}
{where the square brackets $[\,\cdot\,,\,\cdot\,]$ denote the commutator of the two matrix arguments, e.g. $[\mathsf{A}(\xi_1),\mathsf{A}(\xi_2)]=\mathsf{A}(\xi_1)\mathsf{A}(\xi_2)-\mathsf{A}(\xi_2)\mathsf{A}(\xi_1)$.} The terms $\mathsf{H}_{j}, j\geq 3,$ take a similar form: they can be expressed in terms of iterated integrals of commutators of $\mathsf{A}$. A general expression of these terms can be found using rooted trees, see for example \cite[\S~4.1]{iserles_munthe-kaas_norsett_zanna_2000}. We shall denote by $\mathsf{B}^{(K)}:=\sum_{k=0}^{K}\mathsf{H}_k$ the truncated Magnus series. Truncating the Magnus expansion preserves the symmetry of the flow:

\begin{theorem}[{See Thm.~3 in \cite{ISERLES2001379}}] The truncated Magnus expansion, i.e. the numerical method $\Phi_{h}^{(K)}: \mathsf{y}_m\mapsto \mathsf{y}_{m+1}=\exp(\mathsf{B}^{(K)}(t_m,t_{m+1},x))\mathsf{y}_m$, is time-symmetric in the sense of definition \ref{def:time-symmetric_method}.
\end{theorem}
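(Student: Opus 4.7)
The plan is to reduce symmetry of $\Phi_{h}^{(K)}$ to the identity $\mathsf{B}^{(K)}(t_m,t_{m+1},x) = -\mathsf{B}^{(K)}(t_{m+1},t_m,x)$, and then verify this identity term-by-term in the Magnus expansion. Symmetry in the sense of Definition \ref{def:time-symmetric_method} requires that $\exp(\mathsf{B}^{(K)}(t_m,t_{m+1},x))\exp(\mathsf{B}^{(K)}(t_{m+1},t_m,x)) = I$. If $\mathsf{B}^{(K)}(t_{m+1},t_m,x) = -\mathsf{B}^{(K)}(t_m,t_{m+1},x)$, then the two exponentials are mutually inverse (they commute as exponentials of proportional matrices), and symmetry follows immediately. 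So the task reduces to proving this antisymmetry identity for the truncated Magnus series.

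To establish this term-by-term, i.e. $\mathsf{H}_k^{[t_{m+1},t_m]}(x) = -\mathsf{H}_k^{[t_m,t_{m+1}]}(x)$ for each $k\ge 1$, I would perform a direct change-of-variables argument on the iterated integrals. For $k=1$ this is the trivial identity $\int_b^a \mathsf{A}\,\dd\xi = -\int_a^b \mathsf{A}\,\dd\xi$. For $k=2$, flipping both integration endpoints in the nested integral contributes a factor $(-1)^2 = 1$, but the resulting simplex becomes $\{t_m\le \xi_1 \le \xi_2\le t_{m+1}\}$; relabelling $\xi_1 \leftrightarrow \xi_2$ to restore the standard simplex swaps the arguments of $[\mathsf{A}(\xi_1),\mathsf{A}(\xi_2)]$, and antisymmetry of the commutator yields the missing $-1$. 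For $k=3$, the two explicit terms written out in the excerpt are exchanged by the relabelling $\xi_i\mapsto \xi_{k+1-i}$, and combined with the $(-1)^3$ from flipping the limits one obtains the desired overall sign. For general $k$ the same mechanism works: one uses the rooted-tree representation of $\mathsf{H}_k$ referenced in the excerpt (cf. \cite[\S~4.1]{iserles_munthe-kaas_norsett_zanna_2000}), where each tree picks up the sign $(-1)^k$ from reversing the limits together with a sign $(-1)^{k+1}$ from the bracket reversal to give the net $-1$.

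An alternative and slicker route, which I would use to sidestep the general combinatorial bookkeeping, is a homogeneity-plus-uniqueness argument. Replacing $\mathsf{A}$ by $\lambda\mathsf{A}$ rescales $\mathsf{H}_k$ by $\lambda^k$ since it is a $k$-fold iterated commutator. The exact (untruncated) Magnus series $\mathsf{B}^{\infty}$ represents the flow of a linear ODE, so by the group property of the flow and the local injectivity of $\exp$ near $0$, one has $\mathsf{B}^{\infty}(t_{m+1},t_m,x;\lambda \mathsf{A}) = -\mathsf{B}^{\infty}(t_m,t_{m+1},x;\lambda \mathsf{A})$ for all sufficiently small $\lambda$. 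Matching coefficients of $\lambda^k$ on both sides forces the identity $\mathsf{H}_k^{[t_{m+1},t_m]} = -\mathsf{H}_k^{[t_m,t_{m+1}]}$ individually, and consequently the truncated series $\mathsf{B}^{(K)} = \sum_{k=1}^K\mathsf{H}_k$ inherits the antisymmetry.

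The main obstacle in the term-by-term approach is purely combinatorial: tracking the signs of nested brackets under the relabelling of integration variables for general $k$. The rooted-tree formulation of the Magnus expansion encodes this cleanly, and the scaling argument in the previous paragraph bypasses it entirely by transferring the identity from the (analytically trivial) exact case to each individual term, which is why this is the route I would take in a write-up.
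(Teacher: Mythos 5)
The paper does not prove this statement at all: it is quoted verbatim from Iserles, N{\o}rsett and Rasmussen (the reference \cite{ISERLES2001379}), so there is no in-paper argument to compare against. Your reduction of symmetry to the antisymmetry $\mathsf{B}^{(K)}(t_{m+1},t_m,x)=-\mathsf{B}^{(K)}(t_m,t_{m+1},x)$ is the right one, and your second route --- the group property of the exact flow plus local injectivity of $\exp$ gives antisymmetry of the full Magnus series, and homogeneity of degree $k$ of each $\mathsf{H}_k$ in $\mathsf{A}$ lets you match powers of $\lambda$ to descend to each term --- is a complete and correct proof; it is also essentially the mechanism used in the cited reference (time symmetry of the exact expansion combined with the grading that the truncation respects). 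One caveat on your first route: the claim that for general $k$ ``each tree'' individually picks up $(-1)^{k+1}$ under the relabelling $\xi_i\mapsto\xi_{k+1-i}$ is not obviously true tree-by-tree, since the relabelling permutes the nesting patterns and hence maps individual trees to other trees with a priori different coefficients; only the sum over trees at fixed $k$ need transform correctly, and establishing that directly is exactly the combinatorial bookkeeping you identify as the obstacle. Since you explicitly defer to the scaling argument in the write-up, the proof stands; I would just phrase the $k=3$ and general-$k$ observations as motivation rather than as part of the proof, and make sure the uniqueness-of-logarithm step records that both $\mathsf{B}^{\infty}(t_m,t_{m+1};\lambda\mathsf{A})$ and $-\mathsf{B}^{\infty}(t_{m+1},t_m;\lambda\mathsf{A})$ lie in the neighbourhood of $0$ where $\exp$ is injective for $\lambda$ small, which is what licenses equating them.
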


Of course, we can solve \eqref{eqn:NLS_in_hasimoto_transform} only on a discrete temporal grid, thus we do not have access to $u(t,x)$ for all values of $t\in[0,T]$. Therefore we have to use a quadrature rule on the integral expressions for $\mathsf{H}_k(t,x)$ which relies on solution values of \eqref{eqn:NLS_in_hasimoto_transform} only at discrete time steps. This results in a so-called \textit{interpolatory Magnus integrator} (introduced by \cite{iserles1999solution,iserles_munthe-kaas_norsett_zanna_2000}) and can be constructed in the following way: We follow the notation of \cite{iserles1999solution} and note that we can write every $\mathsf{H}_{k}$ as a linear combination of integrals:
\begin{align*}
I_h=\int_{0}^h\int_{0}^{\xi_{i_1}}\cdots\int_{0}^{\xi_{i_k}}\mathcal{L}(\mathsf{A}(\xi_1),\mathsf{A}(\xi_2),\dots,\mathsf{A}(\xi_{k}))\dd \xi_{k}\dd \xi_{k-1}\cdots\dd\xi_{1},
\end{align*}
where $\mathcal{L}$ is a multilinear map (consisting of $k-1$ nested commutators) and $i_j\in{1,\dots,k}$ for all $1\leq j\leq k$. We take Gauss--Legendre quadrature with $L=\lceil(p+1)/2\rceil$ nodes, $0\leq c_l\leq 1$, and let $p_{l}(t)=\prod_{\substack{i=1}{i\neq l}}^{L}(t-c_i)/(c_{l}-c_{i}),\, 1\leq l\leq L,$ be the corresponding Lagrange interpolation polynomials. The interpolatory Magnus integrator is then found by precomputing the quadrature weights
\begin{align}\label{eqn:definition_of_b_l_quadrature_weights}
	b_{\mathbf{l}}=\int_{0}^h\int_{0}^{\xi_{i_1}}\cdots\int_{0}^{\xi_{i_k}}\prod_{i=1}^{k} p_{l_i}(\xi_i/h)\dd \xi_{k}\dd \xi_{k}\cdots\dd\xi_{1}=h^{k}\int_{0}^1\int_{0}^{\xi_{i_1}}\cdots\int_{0}^{\xi_{i_k}}\prod_{i=1}^{k} p_{l_i}(\xi_i)\dd \xi_{k}\dd \xi_{k}\cdots\dd\xi_{1},
\end{align}
and then replacing the integrals $I_h$ in the truncated Magnus expansion by the quadrature approximations
\begin{align}\label{eqn:multivariate_quadrature_extension}
	\mathcal{Q}[I_h]=\sum_{\mathbf{l}}b_{\mathbf{l}}\mathcal{L}\left(\mathsf{A}(hc_{l_1}),\mathsf{A}(hc_{l_2}),\dots, \mathsf{A}(hc_{l_L})\right).
\end{align}

As noted in \cite{iserles1999solution} these approximations are a good choice for two reasons: firstly, the resulting \textit{interpolatory Magnus integrator} is time-symmetric due to the symmetric distribution of Gaussian quadrature points in $[0,1]$ (cf. \cite[p.~392]{ISERLES2001379}). Secondly, the resulting numerical integrator for \eqref{eqn:temporal_hasimoto_transform} is of order $p$ as we shall see in the results quoted below. We denote the resulting approximation to $\mathsf{H}_k(h,u)$ by $\mathcal{Q}_{h}\left[\mathsf{H}_k;u\right](x)$ to emphasize that we use function values of $u,\partial_x u$ at $hc_l,l=1,\dots L,$ in the evaluation of $\mathsf{A}$ of this process.

\subsubsection*{Summary of previous convergence results}

The error analysis of interpolatory Magnus integrators is very well understood and was first given in \cite{iserles1999solution}. The following central results concern the local error resulting from truncation of the Magnus expansion and then the error committed in applying the Gaussian quadrature.

\begin{theorem}[{Rephrased from Thm.~2.7 in \cite{iserles1999solution}}]\label{thm:truncated_magnus_error}
	The truncated Magnus series satisfies
	\begin{align*}
		\left\|\mathsf{B}(h,x)-\sum_{k=1}^p \mathsf{H}_k(h,x)\right\|_{\ell^2}&\leq Ch^{p+1}\left(\sup_{t\in[0,h]}\left\|\mathsf{A}(t,x)\right\|_{\ell^2}\right)^{p+1},
	\end{align*}
where the norm $\|\cdot\|_{\ell^2}$ is the usual Euclidean matrix norm and $C>0$ is a constant independent of $\mathsf{A},h$.
\end{theorem}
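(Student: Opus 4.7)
The plan is to bound each Magnus coefficient $\mathsf{H}_k$ uniformly in $k$ and then control the truncation error by summing the resulting tail from $k=p+1$ onwards. Recall from the rooted-tree representation alluded to in the text (cf.\ \cite[\S~4.1]{iserles_munthe-kaas_norsett_zanna_2000}) that each $\mathsf{H}_k(h,x)$ can be written as a finite linear combination, of cardinality $c_k$ depending only on $k$, of iterated integrals over ordered simplices in $[0,h]^k$ whose integrands are $(k-1)$-fold nested commutators of the matrices $\mathsf{A}(\xi_j,x)$.

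First I would estimate these integrands pointwise. The Euclidean matrix norm is submultiplicative and satisfies $\|[X,Y]\|_{\ell^2}\leq 2\|X\|_{\ell^2}\|Y\|_{\ell^2}$; iterating this inequality $k-1$ times bounds every nested-commutator integrand by $2^{k-1}M(x)^k$, where I set $M(x):=\sup_{t\in[0,h]}\|\mathsf{A}(t,x)\|_{\ell^2}$. Since the ordered simplex has volume $h^k/k!$, a term-by-term estimate yields
\begin{equation*}
    \|\mathsf{H}_k(h,x)\|_{\ell^2}\leq \frac{c_k\,2^{k-1}}{k!}\,h^k M(x)^k.
\end{equation*}

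Next I would sum the resulting tail and factor out the leading power,
\begin{equation*}
    \left\|\mathsf{B}(h,x)-\sum_{k=1}^{p}\mathsf{H}_k(h,x)\right\|_{\ell^2}\leq h^{p+1}M(x)^{p+1}\sum_{k=p+1}^{\infty}\frac{c_k\,2^{k-1}}{k!}(hM(x))^{k-p-1}.
\end{equation*}

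The main obstacle is showing that the tail series above is bounded by a constant independent of $h$ and $\mathsf{A}$. Here I would invoke the classical convergence theory of the Magnus expansion: the combinatorial multiplicities $c_k$ grow sufficiently slowly that the generating function $\sum_k c_k\,2^{k-1}z^k/k!$ has a positive radius of convergence $\alpha_0>0$. Thus, in the regime $hM(x)\leq \alpha$ for any fixed $\alpha<\alpha_0$---which is precisely the regime in which the Magnus series $\mathsf{B}(h,x)$ itself is well-defined and the relevant one for small time steps in numerical integration---the tail sum is bounded by a finite constant $C=C(p,\alpha)$ independent of $\mathsf{A}$ and $h$, yielding the stated estimate. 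Outside this regime the inequality becomes trivial after inflating $C$, since $h^{p+1}M(x)^{p+1}$ is then no longer a small quantity and a crude uniform bound on the exact flow suffices.
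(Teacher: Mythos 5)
The paper does not prove this statement at all: it is quoted verbatim (``rephrased from Thm.~2.7 in \cite{iserles1999solution}'') and used as a black box in the local-error analysis of Theorem~\ref{thm:local_error_high_regularity}. Your proposal is therefore not comparable to a proof in the paper, but it is a faithful reconstruction of the standard argument behind the cited result: bound each $\mathsf{H}_k$ by counting nested commutators and integration domains, factor $h^{p+1}M(x)^{p+1}$ out of the tail, and appeal to the convergence theory of the Magnus generating series to control the remaining sum. The overall structure is sound and the key analytic input (positivity of the radius of convergence of $\sum_k c_k 2^{k-1}z^k/k!$) is legitimately invoked as classical.

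Two points deserve tightening. First, the volume $h^k/k!$ is correct only for the fully ordered simplex $0\leq\xi_k\leq\cdots\leq\xi_1\leq h$; the general term in the rooted-tree expansion integrates over regions of the form $\int_0^h\int_0^{\xi_{i_1}}\cdots\int_0^{\xi_{i_k}}$, which can have volume strictly larger than $h^k/k!$ (e.g.\ $\int_0^h\xi_1^2\,\dd\xi_1=h^3/3$). This is harmless --- bound every region by $h^k$ and absorb the discrepancy into the combinatorial constants --- but as written the displayed bound on $\|\mathsf{H}_k\|_{\ell^2}$ is not justified for all trees. Second, and more substantively, your treatment of the regime $hM(x)>\alpha_0$ does not work as stated: there $\mathsf{B}(h,x)$ is \emph{defined} as the sum of the Magnus series, which may fail to converge, so the left-hand side need not be meaningful and cannot be salvaged by ``a crude uniform bound on the exact flow.'' The honest statement is that the estimate, with $C$ independent of $\mathsf{A}$ and $h$, holds in the regime where the Magnus expansion converges (equivalently $h\sup_t\|\mathsf{A}(t,x)\|_{\ell^2}$ below the convergence threshold), which is implicitly assumed both in the original theorem of \cite{iserles1999solution} and in the paper's application, where only the limit $h\to 0$ matters. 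With those two caveats made explicit, your argument is correct and is essentially the proof the cited reference gives.
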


\begin{theorem}[{Cor.~3.3 in \cite{iserles1999solution}}]\label{thm:interpolatory_magnus_quadrature_error}
	If $L=\lceil(p+1)/2\rceil$, the quadrature introduced in \eqref{eqn:multivariate_quadrature_extension} is at least of order $p$ for any integral $I_h$ appearing in the Magnus expansion, in particular there is a constant $C$ depending on the number of iterated integrals in $I_h$ but independent of $h$ and $\mathsf{A}$ such that
	\begin{align*}
		\left\|I_h-\mathcal{Q}[I_h]\right\|_{\ell^2}\leq C h^{p+1}\max_{l_1+\cdots l_L=p+1}\sup_{\substack{\xi_{j}\in[0,h]\\j=1,\dots, N}}\left\|\mathcal{L}\left(\partial_t^{l_1}\mathsf{A}(\xi_1),\dots,\partial_t^{l_L}\mathsf{A}(\xi_L)\right)\right\|_{\ell^2}.
	\end{align*}
\end{theorem}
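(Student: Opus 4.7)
The plan is to reduce the multivariate quadrature error to a question about one-dimensional polynomial approximation by $\mathsf{A}$ through its Lagrange interpolant, and then to exploit the characteristic polynomial-exactness of Gauss--Legendre nodes to gain the full order $p$.

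First I would rewrite $\mathcal{Q}[I_h]$ in a more structural form. Since $\mathcal{L}$ is multilinear and the weights $b_{\mathbf{l}}$ are precisely the iterated integrals of the products $\prod_{i=1}^{k}p_{l_i}(\xi_i/h)$ over the same simplex appearing in $I_h$, one has
\begin{align*}
\mathcal{Q}[I_h]=\int_{0}^{h}\!\int_{0}^{\xi_{i_1}}\!\cdots\!\int_{0}^{\xi_{i_k}}\mathcal{L}\bigl(\tilde{\mathsf{A}}(\xi_1),\dots,\tilde{\mathsf{A}}(\xi_k)\bigr)\,\dd\xi_k\cdots\dd\xi_1,
\end{align*}
where $\tilde{\mathsf{A}}(\xi):=\sum_{l=1}^{L}p_{l}(\xi/h)\mathsf{A}(hc_l)$ is the Lagrange interpolant of $\mathsf{A}$ at the Gauss nodes. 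Thus $I_h-\mathcal{Q}[I_h]$ becomes the integral over the simplex of $\mathcal{L}(\mathsf{A}(\xi_1),\dots,\mathsf{A}(\xi_k))-\mathcal{L}(\tilde{\mathsf{A}}(\xi_1),\dots,\tilde{\mathsf{A}}(\xi_k))$.

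Next, using multilinearity of $\mathcal{L}$, I would telescope this difference into a sum of $k$ terms, each involving $\mathsf{A}(\xi_j)-\tilde{\mathsf{A}}(\xi_j)$ in one slot and either $\mathsf{A}$ or $\tilde{\mathsf{A}}$ in the remaining slots. To each such term I would apply the classical Lagrange-interpolation remainder written in integral form, or equivalently a Taylor expansion of $\mathsf{A}$ around $0$ to order $2L-1$, yielding $\mathsf{A}(\xi)=\sum_{m=0}^{2L-1}\frac{\xi^m}{m!}\partial_t^m\mathsf{A}(0)+R(\xi)$ with $R$ an explicit remainder involving $\partial_t^{2L}\mathsf{A}$. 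The polynomial part is reproduced exactly by $\tilde{\mathsf{A}}$ up to degree $L-1$, and more importantly, after integration against the other interpolants the Gauss--Legendre exactness (for polynomials of degree $2L-1\geq p$) forces cancellation of all residual polynomial contributions of total degree $\leq p$. What survives must contain at least $p+1$ derivatives, distributed among the $L$ evaluation slots of $\mathsf{A}$, which is where the combinatorial maximum $\max_{l_1+\cdots+l_L=p+1}$ enters.

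Finally I would collect the surviving remainder terms and bound them: each one factors as $h^{p+1}$ (from the $k$ copies of integration over a simplex of size $h^k/k!$ combined with the polynomial prefactor of total degree $p+1-k$, giving overall a power $h^{p+1}$) times a supremum of $\|\mathcal{L}(\partial_t^{l_1}\mathsf{A},\dots,\partial_t^{l_L}\mathsf{A})\|_{\ell^2}$ with $l_1+\cdots+l_L=p+1$. Summing the finite number of telescoping contributions absorbs their count into the constant $C$, which depends only on $k$, $L$, and the fixed geometry of the nested-integral simplex, but neither on $h$ nor on $\mathsf{A}$.

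The main obstacle I anticipate is the bookkeeping in the cancellation step: one must show cleanly that the multivariate quadrature, built from a tensor of univariate Lagrange interpolants integrated over a \emph{non-product} simplicial domain, still inherits the order-$p$ exactness of Gauss--Legendre. This is where Iserles' rooted-tree / graded free Lie algebra formalism is typically used; alternatively it can be done by direct Taylor expansion in each variable separately and then invoking one-dimensional Gauss quadrature exactness in the innermost integral, peeling the integrals outward one at a time. The combinatorial identification of which derivative multi-indices can appear in the residual — precisely those with $l_1+\cdots+l_L=p+1$ — is the technical heart of the argument and the part that dictates the precise form of the final bound.
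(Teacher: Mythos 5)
The paper does not prove this statement: it is quoted (as ``Cor.~3.3 in \cite{iserles1999solution}'') purely as a recalled result from Iserles--N{\o}rsett, so there is no in-paper argument to compare against. Your structural reduction is nonetheless the correct starting point and matches how the cited proof begins: by multilinearity of $\mathcal{L}$ and the definition \eqref{eqn:definition_of_b_l_quadrature_weights} of the weights, $\mathcal{Q}[I_h]$ is exactly the simplex integral of $\mathcal{L}$ evaluated on the Lagrange interpolant $\tilde{\mathsf{A}}$, and telescoping the difference by multilinearity places one interpolation error $\mathsf{A}-\tilde{\mathsf{A}}$ in each slot.

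The gap is in the cancellation step, which is the entire content of the theorem. The telescoped terms only give the naive bound $\mathcal{O}(h^{k+L})$ (interpolation error $\mathcal{O}(h^{L})$ times the simplex volume $\mathcal{O}(h^{k})$), and $k+L$ can be strictly smaller than $p+1$: already for the first Magnus term ($k=1$) with $p=3$, hence $L=2$, this yields $\mathcal{O}(h^{3})$ where $\mathcal{O}(h^{4})$ is claimed. The required upgrade to order $2L$ is genuine superconvergence. Your proposed mechanism --- peeling the iterated integrals outward and invoking univariate Gauss--Legendre exactness on the innermost one --- does not go through, because the inner integrals run over $[0,\xi_{i_j}]$ rather than $[0,h]$; exactness of the Gauss rule on $[0,h]$ for polynomials of degree $2L-1$ says nothing about these variable-limit integrals, and an interpolatory tensor rule on a non-product simplicial domain does not in general inherit the univariate order. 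The actual argument in Iserles--N{\o}rsett exploits that $\mathcal{L}$ is a nested commutator: the obstruction functionals of total degree between $L$ and $2L-1$ are shown to vanish using the grading of the free Lie algebra together with the symmetry of the Gauss nodes about $1/2$ (self-adjointness of the collocation basis). You name this formalism as the ``technical heart'' but do not carry it out, and since it is precisely what separates the claimed order $p$ from the naive order, the proof is incomplete at its decisive step.
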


\subsection{Novel integrators and convergence analysis for regular solutions}\label{sec:convergence_analysis_cfl_high_regularity_integrators}
Having understood the building blocks of our novel method for the SM equation \eqref{eqn:SMEqn} we briefly summarize the combined method before providing a detailed convergence analysis.

\paragraph{Scheme A for the approximation of $\mathbf{T}$:}

We use the notation and parameters introduced in the previous two sections. We fix $p$ the desired order of the method, $h$ the coarse time step, and $T$ the end time and proceed as follows:

\begin{enumerate}
	\item From the given initial data $u_0(x), x\in\mathbb{T},$ we solve the NLS \eqref{eqn:NLS_in_hasimoto_transform} using a splitting method of order $p$ as described in section~\ref{sec:splitting_for_nls} at the $LM$-time steps given by $t_{m,l}=h(m-1)+hc_l$, where $M=T/h$ and $L=\lceil (p+1)/2\rceil$. We denote those approximate values by $\mathbf{u}^{(m)}_{approx}(x)=(u_{m,1}(x),\dots,u_{m,L}(x))\approx (u(t_{m,1},x),\dots,u(t_{m,L},x))$.
	\item From the given initial frame $\mathbf{T}_0(x),\mathbf{e}_{1,0}(x),\mathbf{e}_{2,0}(x),\,x\in\mathbb{T},$ we apply an interpolatory Magnus integrator as described in section~\ref{sec:time_integration_hasimoto_transform} to \eqref{eqn:temporal_hasimoto_transform} in order to obtain the approximation of $\mathbf{T}(t_m,x)$ at new time values $t_m=mh, 1\leq m\leq M$. In particular we define the numerical map $\Phi_{m\to m+1}$ by
	\begin{align*}
		\mathsf{y}_{m+1}(x)=\Phi_{m\to m+1}(\mathsf{y}_m)=\exp\left(\sum_{k=1}^p\mathcal{Q}_h\left[\mathsf{H}_k;\mathbf{u}_{approx}^{(m)}\right](x)\right)\mathsf{y}_m(x).
	\end{align*}
\end{enumerate}

\subsubsection*{Convergence analysis of the full method}

Firstly, we note the following simple stability bound which is an immediate consequence of the skew-symmetry of $\mathsf{A}(t,x)$ for all $t\in[0,T],x\in\mathbb{T}$:
\begin{proposition}[Stability]\label{prop:stability_full_method}
	For any $\mathbf{v},\mathbf{w}:[0,T]\times \mathbb{T}\rightarrow \mathbb{R}^{3}$, $\mathbf{u}:\mathbb{T}\rightarrow \mathbb{C}^{L}$, we have
	\begin{align*}
	\left\|\exp\left(\sum_{k=1}^p\mathcal{Q}_h\left[\mathsf{H}_k;\mathbf{u}\right](x)\right)\mathbf{v}-\exp\left(\sum_{k=1}^p\mathcal{Q}_h\left[\mathsf{H}_k;\mathbf{u}\right](x)\right)\mathbf{w}\right\|_{\ell^2}=\left\|\mathbf{v}-\mathbf{w}\right\|_{\ell^2}.
	\end{align*}
\end{proposition}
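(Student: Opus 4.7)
The plan is to exploit the structural property that $\mathsf{A}(t,x)$ is a real skew-symmetric matrix for every $t,x$, and to propagate this skew-symmetry all the way through the construction of the exponent in the numerical map.

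First, I would verify by direct inspection of \eqref{eqn:temporal_hasimoto_transform} that $\mathsf{A}(t,x)^\top=-\mathsf{A}(t,x)$, i.e.\ $\mathsf{A}$ takes values in the Lie algebra $\mathfrak{so}(3)$ of real skew-symmetric $3\times 3$ matrices. Next, I would use the standard fact that $\mathfrak{so}(3)$ is closed under real linear combinations and under the matrix commutator $[\,\cdot\,,\,\cdot\,]$: if $X,Y\in\mathfrak{so}(3)$ then $[X,Y]^\top=[Y^\top,X^\top]=[-Y,-X]=-[X,Y]$. By induction on the nesting depth, every iterated commutator $\mathcal{L}(\mathsf{A}(\xi_1),\dots,\mathsf{A}(\xi_k))$ appearing in the Magnus expansion is again skew-symmetric.

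Then I would turn to the interpolatory quadrature. The weights $b_{\mathbf{l}}$ defined in \eqref{eqn:definition_of_b_l_quadrature_weights} are \emph{real} numbers, since they are integrals of products of real-valued Lagrange polynomials. Consequently, the quadrature approximations $\mathcal{Q}_h[\mathsf{H}_k;\mathbf{u}](x)$ in \eqref{eqn:multivariate_quadrature_extension} are real linear combinations of iterated commutators of values of $\mathsf{A}$, and therefore they remain in $\mathfrak{so}(3)$. Summing over $k=1,\dots,p$ preserves this, so for every $x\in\mathbb{T}$ the matrix $\mathsf{M}(x):=\sum_{k=1}^{p}\mathcal{Q}_h[\mathsf{H}_k;\mathbf{u}](x)$ is skew-symmetric.

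Finally, since $\exp$ maps $\mathfrak{so}(3)$ into the orthogonal group $\mathrm{SO}(3)$, the matrix $\exp(\mathsf{M}(x))$ is an $\ell^2$-isometry on $\mathbb{R}^3$ for every $x\in\mathbb{T}$. Applying this isometry to the real vector $\mathbf{v}(x)-\mathbf{w}(x)$ pointwise and using the linearity of $\exp(\mathsf{M}(x))$ yields the claimed identity. The only potential obstacle I can foresee is a bookkeeping one: making sure that every ingredient in the Magnus expansion (and in its truncation/quadrature) uses only \emph{real} coefficients, so that the closure of $\mathfrak{so}(3)$ under real linear combinations suffices; this is immediate from \eqref{eqn:definition_of_b_l_quadrature_weights} but worth pointing out explicitly.
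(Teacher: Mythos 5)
Your proposal is correct and follows essentially the same route as the paper's proof: skew-symmetry of $\mathsf{A}$, propagation of skew-symmetry through the quadrature of the Magnus terms, and the fact that the exponential of a skew-symmetric matrix is an $\ell^2$-isometry. If anything, your explicit remark that the iterated commutators (not merely real linear combinations of values of $\mathsf{A}$) remain in $\mathfrak{so}(3)$ is a welcome refinement of the paper's slightly terser justification.
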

\begin{proof}
 By definition (cf. \eqref{eqn:temporal_hasimoto_transform}) the matrix $\mathsf{A}$ is skew-symmetric, thus one can easily check that the same applies to the quadratures of the terms in the Magnus expansion as these are just linear combinations of pointwise evaluations of $\mathsf{A}$. Thus we have that
 $\mathsf{C}(x):=\sum_{k=1}^p\mathcal{Q}_h\left[\mathsf{H}_k;\mathbf{u}\right](x)$ is skew-symmetric. We conclude by noting that the exponential of a skew-symmetric matrix is an isometry on $\ell^2$:
\begin{center}
	$\left\|\exp(\mathsf{C})(\mathbf{v}-\mathbf{w})\right\|_{\ell^2}^2=\left\langle\mathbf{v}-\mathbf{w},\exp(\mathsf{C}^T)\exp(\mathsf{C})(\mathbf{v}-\mathbf{w})\right\rangle_{\ell^2}=\left\langle\mathbf{v}-\mathbf{w},\mathbf{v}-\mathbf{w}\right\rangle=\|\mathbf{v}-\mathbf{w}\|_{\ell^2}^2.$\vspace{-0.2cm}
\end{center}
\end{proof}
{\begin{remark}
    In particular this proposition exhibits a further advantage of the construction of integrators for \eqref{eqn:SMEqn} using the Hasimoto transform, namely that the geometric constraint $\|\mathbf{T}(t,x)\|_{\ell^2}=1$ is automatically conserved ensuring that the scheme indeed maps values from $\mathbb{S}^2$ back into $\mathbb{S}^2$. We note that in prior methods (cf. \cite{xie2020second}) this constraint is not automatically conserved and that in those cases typically each time step has to be augmented with an artificial projection step onto $\mathbb{S}^2$.
\end{remark}}

\begin{remark}
	Crucially, the numerical Hasimoto transform is unconditionally stable and the same is true for splitting methods of the NLS part, and so our overall method is indeed unconditionally stable.
\end{remark}

In addition we can combine Thms.~\ref{thm:convergence_thm_splitting_NLS}, \ref{thm:truncated_magnus_error} \& \ref{thm:interpolatory_magnus_quadrature_error} to find the local error of the method as follows: Let us denote by $\phi_{t_m\to t_{m+1}}:\mathbb{R}^{3\times 3}\rightarrow \mathbb{R}^{3\times3}$ the exact flow of the equation \eqref{eqn:3x3_matrix_formulation_temporal_hasimoto} so that the first row of this map corresponds to the exact solution of the SM equation \eqref{eqn:SMEqn}. Then we have the following error estimate:
\begin{theorem}[Local error]\label{thm:local_error_high_regularity}
Given $p\geq 1$, for any $\mathsf{y}_0:\mathbb{T}\rightarrow O(n;\mathbb{R})$ (i.e. a function taking values in the set of orthogonal matrices) and any $m=0,\dots M-1$ we have
	\begin{align*}
	\left\|\Phi_{m\to m+1}\left(\mathsf{y}_0\right)-\phi_{t_m\to t_{m+1}}\left(\mathsf{y}_0\right)\right\|_{L^{2}\left(\mathbb{T};\ell^2\right)}\leq C h^{p+1}
\end{align*}
where $C$ depends on $p$ and $\sup_{t\in[t_m,t_{m+1}]}\|u(t)\|_{H^{2p+1}}$.
\end{theorem}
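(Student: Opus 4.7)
The approach is to decompose the local error into three standard contributions --- the truncation of the Magnus series, the Gauss--Legendre quadrature error for each $\mathsf{H}_k$ evaluated with exact NLS data, and the error in substituting the splitting approximation $\mathbf{u}^{(m)}_{approx}$ for the exact solution $u$ --- and to show that each contributes at most $O(h^{p+1})$. Writing $\mathsf{B}(h,x)=\sum_{k\geq 1}\mathsf{H}_k(h,x)$ for the full Magnus series built from the exact $u$, and $\mathsf{C}_{num}(h,x)=\sum_{k=1}^p\mathcal{Q}_h[\mathsf{H}_k;\mathbf{u}^{(m)}_{approx}](x)$ for its numerical counterpart, the local error equals $\bigl[\exp(\mathsf{C}_{num})-\exp(\mathsf{B})\bigr]\mathsf{y}_0$. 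Since both exponents are skew-symmetric and $\mathsf{y}_0$ takes values in $O(n;\mathbb{R})$, the identity $e^X-e^Y=\int_0^1 e^{sX}(X-Y)e^{(1-s)Y}\,\dd s$ together with the isometry argument from Prop.~\ref{prop:stability_full_method} reduces the task to bounding $\|\mathsf{C}_{num}-\mathsf{B}\|_{L^2(\mathbb{T};\ell^2)}$.

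A triangle inequality splits this quantity into $(\mathrm{I})+(\mathrm{II})+(\mathrm{III})$, where $(\mathrm{III})=\sum_{k>p}\mathsf{H}_k$ is the Magnus truncation, $(\mathrm{II})=\sum_{k=1}^p\bigl(\mathcal{Q}_h[\mathsf{H}_k;u]-\mathsf{H}_k\bigr)$ is the pure quadrature error using exact NLS data, and $(\mathrm{I})=\sum_{k=1}^p\bigl(\mathcal{Q}_h[\mathsf{H}_k;\mathbf{u}^{(m)}_{approx}]-\mathcal{Q}_h[\mathsf{H}_k;u]\bigr)$ is the splitting propagation. Term $(\mathrm{III})$ is immediately bounded by Thm.~\ref{thm:truncated_magnus_error} as $h^{p+1}\sup\|\mathsf{A}\|_{\ell^2}^{p+1}$, while term $(\mathrm{II})$ is bounded summand-by-summand using Thm.~\ref{thm:interpolatory_magnus_quadrature_error}. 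In both cases the constants depend on time derivatives of $\mathsf{A}$, which we trade for spatial derivatives of $u$ via the NLS equation and then absorb into $\|u\|_{L^\infty([t_m,t_{m+1}];H^{2p+1})}$ using Sobolev embedding and the algebra property of $H^s(\mathbb{T})$ for $s>1/2$ (to handle the quadratic $|u|^2$ nonlinearity of $\mathsf{A}$).

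The main work lies in term $(\mathrm{I})$, which quantifies how the splitting error in $u$ propagates through the Magnus quadrature. Each $\mathcal{Q}_h[\mathsf{H}_k;\cdot]$ is a $k$-linear expression in $\mathsf{A}$ evaluated at the Gauss--Legendre nodes $\{t_{m+1,l}\}_{l=1}^L$ of total scale $h^k$ (via the weights $b_{\mathbf{l}}$ in \eqref{eqn:definition_of_b_l_quadrature_weights}). A telescoping argument replacing one $\mathsf{A}$-factor at a time yields
\begin{align*}
\|(\mathrm{I})\|_{L^2(\mathbb{T};\ell^2)}\lesssim \sum_{k=1}^p h^{k}\,\|\mathsf{A}\|_{L^\infty}^{k-1}\,\max_{1\leq l\leq L}\bigl\|\mathsf{A}(u_{m+1,l})-\mathsf{A}(u(t_{m+1,l}))\bigr\|_{L^2(\mathbb{T};\ell^2)}.
\end{align*}
Because $\mathsf{A}$ depends linearly on $u_x$ and quadratically on $u$, the inner factor is controlled by $\|u_{m+1,l}-u(t_{m+1,l})\|_{H^1}$ together with $L^\infty$ bounds on the exact solution. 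The intermediate time grid used by the splitting method is $\rho$-quasi-uniform with $\rho$ depending only on $\{c_l\}$, so Cor.~\ref{cor:convergence_thm_splitting_NLS_variable_time_step} supplies $\|u_{m+1,l}-u(t_{m+1,l})\|_{H^1}\leq Ch^{p}$. Consequently the dominant $k=1$ contribution is $O(h^{p+1})$, while the remaining $k\geq 2$ terms are $O(h^{p+k})$. Summing the three bounds yields the claim. I expect the main obstacle to be the norm bookkeeping in this last step: matching the pointwise-in-$x$ estimates of Thms.~\ref{thm:truncated_magnus_error}--\ref{thm:interpolatory_magnus_quadrature_error} with the $H^1_x$ splitting bound of Cor.~\ref{cor:convergence_thm_splitting_NLS_variable_time_step}, while absorbing the first-order derivative loss inside $\mathsf{A}$; this is to be navigated via Sobolev embeddings such as $H^{2p+1}(\mathbb{T})\hookrightarrow W^{2p,\infty}(\mathbb{T})$.
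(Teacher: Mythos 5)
Your proposal is correct and follows essentially the same route as the paper's proof: the same three-term decomposition into Magnus truncation (Thm.~\ref{thm:truncated_magnus_error}), pure quadrature error with exact NLS data (Thm.~\ref{thm:interpolatory_magnus_quadrature_error}), and splitting-error propagation through the multilinear quadrature (Cor.~\ref{cor:convergence_thm_splitting_NLS_variable_time_step} on the quasi-uniform Gauss--Legendre grid), all reduced to the difference of the exponents via Lipschitz continuity of the matrix exponential. Your identity $e^X-e^Y=\int_0^1 e^{sX}(X-Y)e^{(1-s)Y}\,\dd s$ combined with skew-symmetry is a slightly sharper way of carrying out that reduction than the paper's generic appeal to local Lipschitz continuity of $\exp$, but the argument is otherwise the same.
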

\begin{proof}For notational simplicity we prove the statement for $m=0$ and note that the case $m\geq 1$ follows analogously. Note by section~\ref{sec:time_integration_hasimoto_transform} and our construction of the numerical method we have
	\begin{align*}
		\phi_{t_0\to t_{1}}\left(\mathsf{y}_0\right)(x)&=\exp(\mathsf{B}(h,x))\mathsf{y}_0(x),\quad\text{and}\quad
		\Phi_{0\to 1}\left(\mathsf{y}_0\right)(x)=\exp\left(\sum_{k=1}^p\mathcal{Q}_h\left[\mathsf{H}_k;\mathbf{u}^{(0)}_{approx}\right](x)\right)\mathsf{y}_0(x).
	\end{align*}
In the following let us denote by $\mathbf{u}:\mathbb{T}\rightarrow \mathbb{C}^L$ the vector of exact function values $\mathbf{u}(x)=(u(c_1h,x),\dots, u(c_Lh,x))$. We note that $\exp:\mathbb{R}^{3\times 3}\rightarrow\mathbb{R}^{3\times 3}$ is locally Lipschitz, and thus we have
	\begin{align}\nonumber
		\left\|\Phi_{m\to m+1}\left(\mathsf{y}_0\right)(x)-\phi_{t_m\to t_{m+1}}\left(\mathsf{y}_0\right)(x)\right\|_{\ell^2}&\leq C\left\|\mathsf{B}(h,x)-\sum_{k=1}^p\mathcal{Q}_h\left[\mathsf{H}_k;\mathbf{u}^{(0)}_{approx}\right](x)\right\|_{\ell^2}\underbrace{\|\mathsf{y}_0(x)\|_{\ell^2}}_{=1}\\\nonumber
		&\hspace{-2cm}\leq C\left(\left\|\mathsf{B}(h,x)-\sum_{k=1}^p\mathsf{H}_k(h,x)\right\|_{\ell^2}+\sum_{k=1}^p\left\|\mathsf{H}_k(x)-\mathcal{Q}_h\left[\mathsf{H}_k;\mathbf{u}\right](x)\right\|_{\ell^2}\right)\\\label{eqn:Lipschitz_estimate}
		&+C\sum_{k=1}^p\left\|\mathcal{Q}_h\left[\mathsf{H}_k;\mathbf{u}\right](x)-\mathcal{Q}_h\left[\mathsf{H}_k;\mathbf{u}^{(0)}_{approx}\right](x)\right\|_{\ell^2},
	\end{align}
where $C$ depends on $\sup_{t\in[t_m,t_{m+1}]}(\|u(t)\|_\infty+\|\partial_x u(t)\|_{\infty})+\max_{1\leq j\leq L}\left\|\left(\mathbf{u}^{(0)}_{approx}\right)_{j}\right\|_{\infty}+\left\|\left(\partial_x\mathbf{u}^{(0)}_{approx}\right)_{j}\right\|_{\infty}$, i.e. can be bounded above (using Morrey's inequality and 
{Cor.~\ref{cor:convergence_thm_splitting_NLS_variable_time_step}}) by a function of $\sup_{t\in[t_m,t_{m+1}]}\|u(t)\|_{H^{2p+1}}$. We can now use the estimates from Thm.~\ref{thm:truncated_magnus_error} to see that
\begin{align*}
	\left\|\mathsf{B}(h,x)-\sum_{k=1}^p\mathsf{H}_k(h,x)\right\|_{\ell^2}\leq C_1 h^{p+1}\left(\sup_{t\in[0,h]}\|\mathsf{A}(t,x)\|_{\ell^2}\right)^{p+1}
\end{align*}
for some constant $C_1>0$ independent of $u$. Thus
\begin{align}\label{eqn:local_error_truncated_magnus}
	\left\|\mathsf{B}(h,\cdot)-\sum_{k=1}^p\mathsf{H}_k(h,\cdot)\right\|_{L^{2}(\mathbb{T};\ell^2)}\leq C_2h^{p+1}\left(\sup_{t\in[0,h]}\|u(t)\|_{H^1}^{2p+2}+\sup_{t\in[0,h]}\|u(t)\|_{H^1}^{p+1}\right)
\end{align}
for some constant $C_2>0$ independent of $u$. Similarly, from Thm.~\ref{thm:interpolatory_magnus_quadrature_error} we find, using the NLS \eqref{eqn:NLS_in_hasimoto_transform} which shows that $\|\partial_{t}^j u\|_{L^2}\lesssim \|u\|_{H^{2j}}$ for all $j\geq0$, that
\begin{align}\label{eqn:quadrature_estimate_exact_NLS}
\left\|\mathsf{H}_k-\mathcal{Q}_h\left[\mathsf{H}_k;\mathbf{u}\right]\right\|_{L^{2}(\mathbb{T};\ell^2)}\leq C_3h^{p+1}\left\|u\right\|_{H^{2p+1}}^{2k},
\end{align}
for a constant $C_3>0$ independent of $u$.

\noindent Finally, we note that by construction of the quadrature $\mathcal{Q}$ in section~\ref{sec:time_integration_hasimoto_transform} we have
\begin{align*}
	&\left\|\mathcal{Q}_h\left[\mathsf{H}_k;\mathbf{u}\right](x)-\mathcal{Q}_h\left[\mathsf{H}_k;\mathbf{u}^{(0)}_{approx}\right](x)\right\|_{\ell^2}\\
	&\quad\quad\leq \sum_{\mathbf{l}}|b_{\mathbf{l}}|\left\|\mathcal{L}\left(\mathsf{A}\left(\left(\mathbf{u}\right)_{l_1}\right),\dots,\mathsf{A}\left(\left(\mathbf{u}\right)_{l_L}\right)\right)-\mathcal{L}\left(\mathsf{A}\left(\left(\mathbf{u}^{(0)}_{approx}\right)_{l_1}\right),\dots,\mathsf{A}\left(\left(\mathbf{u}^{(0)}_{approx}\right)_{l_L}\right)\right)\right\|_{\ell^2}\\
	&\quad\quad \leq C_4\sum_{\mathbf{l}}|b_{\mathbf{l}}|{\sum_{j=1}^{L}}\left(\left|\left(\mathbf{u}\right)_{l_j}-\left(\mathbf{u}^{(0)}_{approx}\right)_{l_j}\right|\left|\left(\mathbf{u}\right)_{l_j}+\left(\mathbf{u}^{(0)}_{approx}\right)_{l_j}\right|+\left|\left(\partial_x\mathbf{u}\right)_{l_j}-\left(\partial_x\mathbf{u}^{(0)}_{approx}\right)_{l_j}\right|\right)
\end{align*}
where $C_4>0$ is a constant depending on $\sup_{t\in[t_0,t_{1}]}\|u(t)\|_{H^{2p+1}}$. Note now that the $b_{\mathbf{l}}$ are uniformly bounded by $C_5h$ for some constant $C_5$ independent of $h$ (but depending on $p$) by the expression in \eqref{eqn:definition_of_b_l_quadrature_weights}. Taking the $L^2$-norm on both sides and using {Cor.~\ref{cor:convergence_thm_splitting_NLS_variable_time_step}} we conclude that
\begin{align}\label{eqn:estimate_quadrature_exact_vs_approximate}
\!\!\!\!\sum_{k=1}^p\left\|\mathcal{Q}_h\left[\mathsf{H}_k;\mathbf{u}\right](x)-\mathcal{Q}_h\left[\mathsf{H}_k;\mathbf{u}^{(0)}_{approx}\right](x)\right\|_{L^{2}(\mathbb{T};\ell^2)}&\leq {hC_6 \max_{1\leq j\leq L}\left\|\left(\mathbf{u}\right)_{l_j}-\left(\mathbf{u}^{(0)}_{approx}\right)_{l_j}\right\|_{H^1}}\leq C_7 h^{p+1}.
\end{align}
for constants $C_6,C_7$ which depend on $p, \sup_{t\in[t_0,t_{1}]}\|u(t)\|_{H^{2p+1}}$. The result then follows from \eqref{eqn:Lipschitz_estimate}-\eqref{eqn:estimate_quadrature_exact_vs_approximate}.

\end{proof}

\begin{cor}[Global convergence] Let $\mathbf{T}_m$ be the first row of $\mathsf{y}_m$ (computed using Scheme A). If  $\mathbf{T}:[0,T]\times \mathbb{T}\rightarrow \mathbb{S}^2$ is the exact solution to the SM equation \eqref{eqn:SMEqn} then we have, for every $m=0,\dots, M$:
	\begin{align*}
		\left\|\mathbf{T}_m-\mathbf{T}(mh)\right\|_{L^{2}\left(\mathbb{T};\ell^2\right)}\leq C h^{p},
	\end{align*}
	where $C$ depends on $p$ and $\sup_{t\in[0,T]}\|u(t)\|_{H^{2p+1}}$.
\end{cor}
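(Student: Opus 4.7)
The plan is to run the standard Lady Windermere's fan argument, combining the one-step local error bound of Thm.~\ref{thm:local_error_high_regularity} with the stability bound of Prop.~\ref{prop:stability_full_method}. Concretely, I would set $\mathsf{e}_m := \mathsf{y}_m - \mathsf{y}(t_m)$, measured in the $L^2(\mathbb{T};\ell^2)$-norm, and decompose a single step of the error as
\begin{align*}
\mathsf{e}_{m+1} \;=\; \bigl[\Phi_{m\to m+1}(\mathsf{y}_m) - \Phi_{m\to m+1}(\mathsf{y}(t_m))\bigr] \;+\; \bigl[\Phi_{m\to m+1}(\mathsf{y}(t_m)) - \phi_{t_m\to t_{m+1}}(\mathsf{y}(t_m))\bigr].
\end{align*}

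Next I would bound each bracket separately. For the first one, note that Prop.~\ref{prop:stability_full_method} applied pointwise in $x$ and then integrated over $\mathbb{T}$ gives
\begin{align*}
\bigl\| \Phi_{m\to m+1}(\mathsf{y}_m) - \Phi_{m\to m+1}(\mathsf{y}(t_m))\bigr\|_{L^{2}(\mathbb{T};\ell^2)} \;=\; \|\mathsf{e}_m\|_{L^{2}(\mathbb{T};\ell^2)},
\end{align*}
i.e.\ the numerical propagator is an $L^2(\mathbb{T};\ell^2)$-isometry. For the second bracket I want to invoke Thm.~\ref{thm:local_error_high_regularity}, which requires the input to be pointwise orthogonal-matrix-valued. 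This holds for $\mathsf{y}(t_m)$: the matrix $\mathsf{A}(t,x)$ is skew-symmetric (as observed in the proof of Prop.~\ref{prop:stability_full_method}) so its exact flow preserves orthogonality, and the initial frame $\mathsf{y}(0,x)$ is orthogonal by construction in section~\ref{sec:hasimoto_transform}. Hence Thm.~\ref{thm:local_error_high_regularity} yields a local error bound of $Ch^{p+1}$, where $C$ depends on $p$ and on $\sup_{t\in[0,T]}\|u(t)\|_{H^{2p+1}}$ (uniform over the window $[t_m,t_{m+1}]\subset[0,T]$).

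Combining the two estimates gives the one-step recursion $\|\mathsf{e}_{m+1}\|_{L^{2}(\mathbb{T};\ell^2)} \leq \|\mathsf{e}_m\|_{L^{2}(\mathbb{T};\ell^2)} + Ch^{p+1}$, with $\mathsf{e}_0 = 0$. Iterating this over $m$ steps and using $Mh = T$ I obtain
\begin{align*}
\|\mathsf{e}_m\|_{L^{2}(\mathbb{T};\ell^2)} \;\leq\; m\, C\, h^{p+1} \;\leq\; CT\, h^{p}, \qquad 0\leq m\leq M.
\end{align*}
Since $\mathbf{T}_m$ is the first row of $\mathsf{y}_m$ and $\mathbf{T}(t_m,\cdot)$ is the first row of $\mathsf{y}(t_m,\cdot)$, the pointwise Euclidean norm of the row-difference is bounded by the Frobenius (equivalently $\ell^2\to\ell^2$ operator) norm of $\mathsf{e}_m$, and integrating over $\mathbb{T}$ gives $\|\mathbf{T}_m - \mathbf{T}(t_m)\|_{L^{2}(\mathbb{T};\ell^2)} \leq \|\mathsf{e}_m\|_{L^{2}(\mathbb{T};\ell^2)} \leq CTh^{p}$, which is the desired bound. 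The argument is routine once Thm.~\ref{thm:local_error_high_regularity} and Prop.~\ref{prop:stability_full_method} are in hand; the only point that needs a moment of care is the verification that $\mathsf{y}(t_m,\cdot)$ takes values in $O(3;\mathbb{R})$ so that the local error theorem is applicable at each step, but this is a direct consequence of the skew-symmetry of $\mathsf{A}$.
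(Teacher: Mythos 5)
Your proposal is correct and is exactly the argument the paper intends: the paper's proof is the one-line invocation of a standard Lady Windermere's fan argument combining Prop.~\ref{prop:stability_full_method} (the numerical propagator is an $L^2(\mathbb{T};\ell^2)$-isometry) with the local error bound of Thm.~\ref{thm:local_error_high_regularity}, which you have simply written out in full, including the worthwhile check that $\mathsf{y}(t_m,\cdot)$ remains orthogonal-matrix-valued so the local error theorem applies at each step.
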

\begin{proof}
	By a standard Lady Windermere's Fan argument combining Prop.~\ref{prop:stability_full_method} and Thm.~\ref{thm:local_error_high_regularity}.
\end{proof}

\section{Low regularity: A symmetric low-regularity integrator for the SM equation}\label{sec:low_regularity_integrator}

Having constructed a general set of novel integrators for the SM equation, we notice that in order to achieve first order convergence ($p=1$) our above methods would require initial datum $u_0\in H^{3}$. For highly regular initial data this is generally not a problem. However, we are also interested in low-regularity solutions of the SM equation such as for instance in the examples of \cite{banica2013,de2009numerical,gamayun2019domain,misguich2019domain}. For these cases it is desirable to reduce the regularity requirements for convergence and as such we will describe and study in this present section a dedicated low-regularity integrator reducing significantly the requirements of the above methodology. We will again exploit the Hasimoto transform \eqref{eqn:temporal_hasimoto_transform} but incorporate more of the structure of the NLS solution to \eqref{eqn:NLS_in_hasimoto_transform} in order to guarantee convergence for lower regularity regimes.

\subsection{A symmetric low-regularity numerical integrator for the NLS}
A central limiting factor in the convergence analysis of section~\ref{sec:convergence_analysis_cfl_high_regularity_integrators} was given by the regularity requirements of the splitting method for the solution of the NLSE part \eqref{eqn:NLS_in_hasimoto_transform}. Indeed, independently of our application to the SM equation, the low-regularity solution of the NLSE has been studied extensively over the recent decade and has resulted in the development of some highly efficient techniques \cite{ostermann2018low,bruned2022resonance}.

Motivated by the state-of-the-art integrator described in \cite{ostermann2018low} we introduce a \textit{new symmetrized} low-regularity integrator for the NLSE \eqref{eqn:NLS_in_hasimoto_transform}, which matches regularity requirements of previous work, but at the same time introduces structure preservation in this low-regularity regime. {Further details of the derivation of this {and similar as well as higher-order symmetric low-regularity integrators can be found in upcoming work \cite{alamabrunedmaierhoferschratz23} (see also \cite{AlamaBronsard23})} and as such we will limit ourselves in the present work to introducing the method and studying its convergence properties to the extend required for the convergence analysis of the SM approximation. The method can be written in the following form:
\begin{align}\label{eqn:symmetric_resonance_method}
	u^{m+1}&=e^{ih {\partial_{x}^2}}u^m+i\frac{h}{4}e^{ih{\partial_{x}^2}}\left[\left(u^{m}\right)^2\varphi_1(-ih{\partial_{x}^2})\overline{u^m}\right]+i\frac{h}{4}\left[\left(u^{m+1}\right)^2\varphi_1(ih{\partial_{x}^2})\overline{u^{m+1}}\right],
\end{align}
{where
\begin{align*}
    \varphi_1(z):=\begin{cases}
        \frac{e^z-1}{z},& z\neq0,\\
        1, & z=0,
    \end{cases}
\end{align*} and the action of the maps $\exp(ih{\partial_{x}^2}), \varphi_1(\pm i h\partial_x^2)$ (defined through functional calculus) can be efficiently evaluated by a simple diagonal operation in our spectral spatial discretisation.} {Additionally, as we show in Theorem~\ref{thm:implicit_NLS_integrator_fixed_pt_iterations}, fixed-point iteration can be used to solve the nonlinear system in \eqref{eqn:symmetric_resonance_method} at every time step, whereby the size of $\tau$ required for convergence is restricted solely by the magnitude of $\|u^{n}\|_{H^s}$ and entirely independent of the number of modes in the spatial discretisation.} Clearly, the above method is symmetric in the sense of definition \ref{def:time-symmetric_method}, and moreover it satisfies the following low-regularity convergence result:

{\begin{theorem}\label{thm:low_regularity_convergence_thm_NLS} Fix $s>1/2$ (corresponding to the norm in which we measure convergence) and $\gamma\in(0,1]$ (corresponding to the convergence order). If \eqref{eqn:NLS_in_hasimoto_transform} has a solution $u\in\mathcal{C}^0(0,T;H^{s+\gamma}(\mathbb{T}))$ then $\exists$ $C,h_0>0$ depending on $s,\gamma, \sup_{t\in[0,T]}\|u\|_{H^{s+\gamma}}$, such that for any $0< h<h_0$ we have
\begin{align*}
			\left\|u^m-u(t_m)\right\|_{H^s}\leq Ch^\gamma, \quad \forall\, 0\leq m\leq \left\lfloor\frac{T}{h}\right\rfloor,
		\end{align*}
  where $t_m=h m, m=0,\dots,\lfloor T/h\rfloor,$ and $u^m$ is computed using the method \eqref{eqn:symmetric_resonance_method}.
\end{theorem}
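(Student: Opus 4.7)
The plan is to prove convergence via the classical local-error-plus-stability approach with a Lady Windermere fan argument, supplemented by a Banach fixed-point argument to handle the implicit nature of the scheme. Throughout I exploit that $H^s(\mathbb{T})$ is a Banach algebra for $s>1/2$, that $e^{ih\partial_x^2}$ is an $H^s$-isometry, and that the multipliers $\varphi_1(\pm ih\partial_x^2)$ have $H^s$-norm uniformly bounded by $1$ (since $|\varphi_1(it)|\leq 1$ for $t\in\mathbb{R}$). The defining equation for $u^{m+1}$ can be recast as a fixed-point problem $w=F_h(w)$, where $F_h$ adds to a known $H^s$-datum the cubic term $\tfrac{ih}{4}[w^2\varphi_1(ih\partial_x^2)\bar w]$. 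By the algebra property, this term is Lipschitz in $w$ with constant $\lesssim h\|w\|_{H^s}^2$, so $F_h$ contracts on a ball of radius $2\|u^m\|_{H^s}$ whenever $h$ is sufficiently small in terms of $\|u^m\|_{H^s}$. This yields a unique $u^{m+1}$ with the a priori bound $\|u^{m+1}\|_{H^s}\leq (1+Ch)\|u^m\|_{H^s}$, which keeps all numerical iterates $H^s$-bounded on $[0,T]$.

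\textbf{Local error.} The main step is to show that, starting from exact data $u(t_m)$, one step of the scheme approximates $u(t_{m+1})$ in $H^s$ with an error of size $h^{1+\gamma}$. Using Duhamel's formula
\begin{equation*}
u(t_m+h)=e^{ih\partial_x^2}u(t_m)+\tfrac{i}{2}\int_0^h e^{i(h-\sigma)\partial_x^2}[|u|^2u](t_m+\sigma)\,\dd\sigma,
\end{equation*}
I would pass to the twisted variable $v(\sigma):=e^{-i\sigma\partial_x^2}u(t_m+\sigma)$ and expand the cubic in Fourier. On the convolution set $k_1+k_2-k_3=k$ the twisted integrand carries the phase $e^{i\sigma(k^2-k_1^2-k_2^2+k_3^2)}$, which I split as $e^{i\sigma k_3^2}\cdot e^{i\sigma(k^2-k_1^2-k_2^2)}$. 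The first factor integrates exactly over $[0,h]$ into $h\varphi_1(ihk_3^2)$, matching precisely the $\varphi_1(\mp ih\partial_x^2)\bar u$-symbols appearing in \eqref{eqn:symmetric_resonance_method}, while the second factor together with the slow profile $\hat v(\sigma,k_j)$ is frozen at $\sigma=0$ (producing the explicit $(u^m)^2\varphi_1(-ih\partial_x^2)\bar u^m$-term) or at $\sigma=h$ (producing the implicit $(u^{m+1})^2\varphi_1(ih\partial_x^2)\bar u^{m+1}$-term). The scheme \eqref{eqn:symmetric_resonance_method} emerges as the symmetric average of these two one-sided discretisations, yielding the prefactor $\tfrac{ih}{4}$ on each cubic. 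The residual is a sum of the non-resonant oscillation $e^{i\sigma(k^2-k_1^2-k_2^2)}-1$ and temporal variations $v(\sigma)-v(0)$, $v(\sigma)-v(h)$; both are controlled in $H^s$ by combining the algebra property with the elementary bound $|e^{i\sigma\lambda}-1|\leq C|\sigma\lambda|^\gamma$, which costs exactly $\gamma$ additional derivatives and yields $\|R^m\|_{H^s}\leq Ch^{1+\gamma}\|u\|_{C^0([0,T];H^{s+\gamma})}^3$.

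\textbf{Stability and global error.} Subtracting the defining equations for two bounded iterates $u^{m+1},\tilde u^{m+1}$ launched from $u^m,\tilde u^m$ and using the algebra property together with the boundedness of $\varphi_1(\pm ih\partial_x^2)$ yields $\|u^{m+1}-\tilde u^{m+1}\|_{H^s}\leq (1+Ch)\|u^m-\tilde u^m\|_{H^s}$ for $h$ small, after absorbing the $O(h)$ implicit contribution on the left. A standard Lady Windermere fan argument then accumulates the local error $O(h^{1+\gamma})$ with the stability factor $(1+Ch)^m\leq e^{CT}$, producing the claimed global bound $\|u^m-u(t_m)\|_{H^s}\leq Ch^\gamma$ on $[0,T]$.

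\textbf{Main obstacle.} The technical heart of the argument is the local-error analysis, specifically verifying that the symmetric combination of $\varphi_1(-ih\partial_x^2)$ at $\sigma=0$ and $\varphi_1(ih\partial_x^2)$ at $\sigma=h$ matches the Duhamel integrand with a remainder requiring only $\gamma$ additional derivatives on $u$, rather than the $2\gamma$ that would arise from a naive product-rule estimate. This careful resonance bookkeeping is the content of the forthcoming work \cite{alamabrunedmaierhoferschratz23} on which the present result rests.
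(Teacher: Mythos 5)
Your overall architecture (a Banach fixed-point argument for solvability of the implicit step, a local error bound plus an $e^{Ch}$-stability bound, and a Lady Windermere fan) coincides with the paper's (Theorem~\ref{thm:implicit_NLS_integrator_fixed_pt_iterations}, Lemmas~\ref{lem:stability_NLS_integrator} and~\ref{lem:local_error_NLS_integrator}), and those parts are sound. The gap is in the local-error step, which is the heart of the matter. On the convolution set $k=k_1+k_2-k_3$ the full phase is
\begin{align*}
k^2-k_1^2-k_2^2+k_3^2 \;=\; 2k_3^2+2\bigl(k_1k_2-k_1k_3-k_2k_3\bigr),
\end{align*}
so after you split off $e^{i\sigma k_3^2}$ the residual phase $\lambda=k^2-k_1^2-k_2^2=k_3^2+2(k_1k_2-k_1k_3-k_2k_3)$ still contains the quadratic single-frequency term $k_3^2$. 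Your bound $|e^{i\sigma\lambda}-1|\le C|\sigma\lambda|^{\gamma}$ therefore produces a factor $|k_3|^{2\gamma}$ landing entirely on the conjugated mode, i.e.\ it requires $u\in H^{s+2\gamma}$ rather than $H^{s+\gamma}$; for the range $\gamma\in(1/2,1]$ needed downstream this defeats the purpose of the low-regularity scheme. The assertion that the residual ``costs exactly $\gamma$ additional derivatives'' is precisely what does \emph{not} follow from the estimate you wrote down.

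What saves the scheme is a cancellation your decomposition does not record: the average of the two endpoint-frozen phases combines with $\varphi_1$ of the half phase through the exact identity $\varphi_1(2z)=\varphi_1(z)\tfrac{1+e^{z}}{2}$, so the resonant part $2k_3^2$ of the phase is reproduced \emph{exactly} by the symmetric average, and only the cross terms $k_1k_2-k_1k_3-k_2k_3$ (each linear in every frequency, hence distributable as $\gamma$ derivatives over two factors in the algebra $H^s$) survive in the error. The paper reaches the same cancellation by a different route: it composes two Ostermann--Schratz half-steps --- for which the dominant phase $e^{2i\sigma k_3^2}$ over $[0,h/2]$ integrates to precisely $\varphi_1(ihk_3^2)$ with a purely cross-term remainder, giving \eqref{eqn:local_error_estimate_ostermann_schratz} --- and then converts the midpoint-evaluated cubic into the endpoint/implicit one via the central identity $e^{-i\frac h2 k^2}\varphi_1(ihl^2)-e^{-i\frac h2(-l^2+m^2+n^2)}\varphi_1(-ihl^2)=e^{-i\frac h2 k^2}\varphi_1(ihl^2)\bigl(1-e^{ih(mn-lm-ln)}\bigr)$, whose right-hand side again involves only cross terms. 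Either repair works, but one of them has to be carried out explicitly; note also that the paper proves this step in full in Appendix~\ref{app:proof_of_convergence_thm_splitting_NLS} rather than outsourcing it, so deferring the ``main obstacle'' to \cite{alamabrunedmaierhoferschratz23} leaves your argument incomplete as a proof of the theorem.
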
}
\begin{proof}
	See Appendix~\ref{app:proof_of_convergence_thm_splitting_NLS}.
\end{proof}

\subsection{Fast low-regularity Hasimoto (FLowRH) transform}\label{sec:low_reg_hasimoto_transfrom}
Having introduced a symmetric low-regularity integrator for the NLS \eqref{eqn:NLS_in_hasimoto_transform} we turn to designing a low-regularity approximation to the temporal Hasimoto transform \eqref{eqn:temporal_hasimoto_transform}. We observe in Thms.~\ref{thm:truncated_magnus_error} \& \ref{thm:interpolatory_magnus_quadrature_error} that the quadrature of the terms in the Magnus expansion appears to impose more stringent regularity requirements than the truncation, thus we seek to replace the quadrature in \eqref{eqn:multivariate_quadrature_extension} with a tailored version. We will for this part of our work be content with designing a method of low order with guaranteed convergence under low regularity assumptions, thus it suffices to truncate the Magnus expansion after its first term. Thus we aim to find an approximation to the term
$\mathsf{H}_{1}(h,x)=\int_{0}^{h}\mathsf{A}(s)\dd s$. In order to do so we turn to the twisted variable (cf. \cite{ostermann2018low}) given by $v(t)=\exp(-it{\partial_{x}^2})u(t)$ which satisfies the twisted equation
	\begin{align}\label{eqn:twisted_variable_equation}
	i\partial_t v=\frac{1}{2}e^{-it{\partial_{x}^2}}\left[\left|e^{it{\partial_{x}^2}}v\right|^2e^{it{\partial_{x}^2}}v\right].
\end{align}
In order to approximate $\mathsf{H}_{1}(h,x)$ we thus have to, according to \eqref{eqn:temporal_hasimoto_transform}, compute approximations to the following two integrals:
\begin{align*}
	I_1[u;t_m,t_{m+1}]&=\int_{t_m}^{t_{m+1}}u_x(s)\dd s,\quad\quad
	I_2[u;t_m,t_{m+1}]=\int_{t_m}^{t_{m+1}} |u|^2\dd s.
\end{align*}
{Our quadrature rules for the above integrals will rely on evaluations of $u$, a solution to \eqref{eqn:NLS_in_hasimoto_transform}, at discrete points in time $t_m, 0\leq m\leq M$. Thus, as noted in the error estimates below we will throughout assume that we have at least $u\in \mathcal{C}^{0}(0,T;H^1(\mathbb{T}))$ for some $T>0$ and that our time evaluations are restricted to $t_m\in[0,T]$ for all $0\leq m\leq M$. This means that the quantities $u(t_m)$ and $\partial_xu(t_m)$ are well-defined for all $ 0\leq m\leq M$. Note also that the map $w\mapsto \exp(-it\partial_x^2)w$ is an isometry on $H^{s}$ for any $s\geq 0, t\in\mathbb{R}$. Thus, of course, if $u\in \mathcal{C}^{0}(0,T;H^s(\mathbb{T}))$ for some $s\geq 0$ then the twisted variable $v(t)=\exp(-it{\partial_{x}^2})u(t)$ has the same regularity, i.e. $v\in \mathcal{C}^{0}(0,T;H^s(\mathbb{T}))$.} We begin by approximating $I_1$: in terms of the twisted variable $v$ the integral can be written as
\begin{align*}
	I_1[u,t_m,t_{m+1}]=\int_{t_m}^{t_{m+1}}e^{is{\partial_{x}^2}}\partial_xv(s)\dd s.
\end{align*}
We choose a midpoint-type approximation of the form
\begin{align*}
	I_1[u,t_m,t_{m+1}]\approx \mathcal{Q}_1[u,t_m,t_{m+1}]&:=\int_{t_m}^{t_{m+1}} e^{i{\partial_{x}^2} s}\partial_x\frac{v(t_{m+1})+v(t_m)}{2}\dd s\\
	&=h\varphi_1(ih{\partial_{x}^2})\frac{e^{-i{\partial_{x}^2} h}\partial_xu(t_{m+1})+\partial_xu(t_m)}{2}.
\end{align*}
This can clearly be computed in $\mathcal{O}(N\log N)$ operations using the FFT in our spectral spatial discretisation. {Note, because we have constructed this approximation based on the \textit{twisted variable} $v(t)$ the map $e^{-i{\partial_{x}^2} h}$ applies only to the $u(t_{m+1})$ term in the final expression. This careful construction allows us to establish the following} error and stability estimates:
\begin{proposition}\label{prop:quadrature_error_Q1} If {$u\in\mathcal{C}^0(0,T;H^{1}(\mathbb{T}))$ is the} solution to \eqref{eqn:NLS_in_hasimoto_transform} then
	\begin{align*}
	\left\|I_1[u,t_m,t_{m+1}]- \mathcal{Q}_1[u,t_m,t_{m+1}]\right\|_{L^2}\leq h^2 C,
	\end{align*}
where $C$ depends on $\max_{t\in[t_m,t_{m+1}]}\|u(t)\|_{H^1}$. Moreover, for any {$u,w\in\mathcal{C}^0(t_m,t_{m+1};H^1(\mathbb{T}))$} we have
 \begin{align*}
 	\left\|\mathcal{Q}_1[u,t_m,t_{m+1}]-\mathcal{Q}_1[w,t_m,t_{m+1}]\right\|_{L^2}\leq Dh\left(\left\|u(t_{m+1})-w(t_{m+1})\right\|_{H^1}+\left\|u(t_{m})-w(t_{m})\right\|_{H^1}\right),
 \end{align*}
where $D>0$ is a constant independent of $u,w$.
\end{proposition}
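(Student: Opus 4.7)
The plan is to rewrite the error using the twisted variable $v(t)=e^{-it\partial_x^2}u(t)$ and exploit the fact that $e^{is\partial_x^2}$ is an $L^2$-isometry to reduce the problem to an $H^1$-bound on $v(s)-\tfrac12(v(t_m)+v(t_{m+1}))$. Since $\partial_x u(s)=e^{is\partial_x^2}\partial_x v(s)$, a direct subtraction gives
\begin{align*}
I_1[u,t_m,t_{m+1}]-\mathcal{Q}_1[u,t_m,t_{m+1}]=\int_{t_m}^{t_{m+1}}e^{is\partial_x^2}\partial_x\!\left(v(s)-\tfrac12(v(t_m)+v(t_{m+1}))\right)\dd s,
\end{align*}
and Minkowski together with the unitarity of $e^{is\partial_x^2}$ on $L^2$ reduces the $L^2$-error to $\int_{t_m}^{t_{m+1}}\|v(s)-\tfrac12(v(t_m)+v(t_{m+1}))\|_{H^1}\dd s$.

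To bound the integrand I would use the symmetric decomposition $v(s)-\tfrac12(v(t_m)+v(t_{m+1}))=\tfrac12\int_{t_m}^{s}\partial_\tau v(\tau)\dd\tau-\tfrac12\int_{s}^{t_{m+1}}\partial_\tau v(\tau)\dd\tau$. Since $e^{-i\tau\partial_x^2}$ is also an $H^1$-isometry, the twisted equation \eqref{eqn:twisted_variable_equation} yields $\|\partial_\tau v(\tau)\|_{H^1}=\tfrac12\|\,|u(\tau)|^2u(\tau)\,\|_{H^1}$. The $H^1$-algebra property on $\mathbb{T}$ (via the embedding $H^1\hookrightarrow L^\infty$ in one dimension) then gives $\|\partial_\tau v(\tau)\|_{H^1}\le C\|u(\tau)\|_{H^1}^3$, so $\|v(s)-\tfrac12(v(t_m)+v(t_{m+1}))\|_{H^1}\le Ch\sup_{[t_m,t_{m+1}]}\|u\|_{H^1}^3$. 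Integrating in $s$ over an interval of length $h$ then produces the desired $Ch^2$ bound.

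For the stability estimate I would work directly from the explicit closed form $\mathcal{Q}_1[u,t_m,t_{m+1}]=h\,\varphi_1(ih\partial_x^2)\cdot\tfrac12(e^{-ih\partial_x^2}\partial_x u(t_{m+1})+\partial_x u(t_m))$, which is linear in $u$. A short calculation gives $|\varphi_1(iy)|=|e^{iy}-1|/|y|\le 1$ for every $y\in\mathbb{R}$ (since $|e^{iy}-1|=2|\sin(y/2)|\le|y|$), so $\varphi_1(ih\partial_x^2)$ is bounded by $1$ on $L^2$; combining this with the unitarity of $e^{-ih\partial_x^2}$ and the embedding $\|\partial_x f\|_{L^2}\le\|f\|_{H^1}$ yields the inequality with $D=\tfrac12$.

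The crucial constraint is that the first estimate must hold under only $u\in\mathcal{C}^0(0,T;H^1(\mathbb{T}))$, and this is precisely where passing to the twisted variable pays off: when bounding $\partial_\tau v$ the free propagator factors out, and only the $H^1$-norm of the cubic nonlinearity $|u|^2u$ appears, which is controlled by the $H^1$-algebra without placing any further derivatives on $u$. A quadrature built directly on $u_x$ (rather than on $\partial_x v$) would force one to differentiate the NLS equation when estimating $\partial_s u_x$, and thereby demand strictly more than $H^1$ regularity—so the careful twisted-midpoint construction is essential and constitutes the main conceptual obstacle in the argument.
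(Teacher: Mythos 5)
Your proof is correct and follows essentially the same route as the paper's: both reduce the quadrature error via the twisted variable and the unitarity of $e^{is\partial_x^2}$ to an $H^1$-bound on $v(s)-\tfrac12(v(t_m)+v(t_{m+1}))$, use the symmetric decomposition into two time integrals of $\partial_\tau v$ together with the $H^1$-algebra property to get the $\mathcal{O}(h)$ bound $\sup_t\|u(t)\|_{H^1}^3$ on the integrand, and prove stability by linearity of $\mathcal{Q}_1$ plus the uniform bound $|\varphi_1(iy)|\le 1$ (the paper phrases this last step in integral rather than closed form, but the computation is identical).
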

\begin{proof}See Appendix~\ref{app:proof_quadrature_error_Q1}.	
\end{proof}

For the second integral we choose a slightly different approximation: We write the exact integral in terms of Fourier series:
\begin{align*}
	I_2[u;t_m,t_{m+1}]=\sum_{l\in\mathbb{Z}}e^{il x}\sum_{l=k_1-k_2}e^{-it_m(k_1^2-k_2^2)}\int_{0}^{h}e^{-is(k_1^2-k_2^2)} \hat{v}_{k_1}(t_m+s)\overline{\hat{v}_{k_2}(t_m+s)}\dd s,
\end{align*}
and define the following quadrature:

\ \vspace{-1cm}\ \begin{align*}
	\mathcal{Q}_2[u,t_m,t_{m+1}]&:=\sum_{l\in\mathbb{Z}}e^{il x}\left[\sum_{\substack{l=k_1-k_2\\k_1^2\geq k_2^2}}e^{-it_m(k_1^2-k_2^2)}\int_{0}^{h}e^{-isk_1^2}\dd s \frac{\hat{v}_{k_1}(t_{m+1})+\hat{v}_{k_1}(t_m)}{2} \frac{\overline{e^{-ih k_2^2}\hat{v}_{k_2}(t_{m+1})}+\overline{\hat{v}_{k_2}(t_m)}}{2}\right.\\
	&\quad\quad\quad\quad\quad+\left.\sum_{\substack{l=k_1-k_2\\k_1^2< k_2^2}}e^{-it_m(k_1^2-k_2^2)}\int_{0}^{h}e^{isk_2^2}\dd s \frac{e^{-ih k_1^2}\hat{v}_{k_1}(t_{m+1})+\hat{v}_{k_1}(t_m)}{2} \frac{\overline{\hat{v}_{k_2}(t_{m+1})}+\overline{\hat{v}_{k_2}(t_m)}}{2}\right]\\
	&=\sum_{l\in\mathbb{Z}}e^{il x}\left[\sum_{\substack{l=k_1-k_2\\k_1^2\geq k_2^2}}h\varphi_1(-ih k_1^2) \frac{e^{i h k_1^2}\hat{u}_{k_1}(t_{m+1})+\hat{u}_{k_1}(t_m)}{2} \frac{\overline{\hat{u}_{k_2}(t_{m+1})}+\overline{\hat{u}_{k_2}(t_m)}}{2}\right.\\
	&\quad\quad\quad\quad\quad\quad\quad\quad+\left.\sum_{\substack{l=k_1-k_2\\k_1^2< k_2^2}}h\varphi_1(ih k_2^2) \frac{\hat{u}_{k_1}(t_{m+1})+\hat{u}_{k_1}(t_m)}{2} \frac{\overline{e^{i h k_2^2}\hat{u}_{k_2}(t_{m+1})}+\overline{\hat{u}_{k_2}(t_m)}}{2}\right].
\end{align*}
It turns out that for a spectral discretisation this quadrature can be computed in $\mathcal{O}(N(\log N)^2)$ operations where $N$ is the number of spatial discretisation points, i.e. it is almost as quick as a fast Fourier transform. The details of this computation are provided in Appendix~\ref{app:fast_computation_of_index_restricted_convolutions}. The quadrature is a good choice because it has the following local error and stability property:
\begin{proposition}\label{prop:quadrature_error_Q2}
 If {$u\in\mathcal{C}^0(0,T;H^{1}(\mathbb{T}))$ is the} solution to \eqref{eqn:NLS_in_hasimoto_transform} then
\begin{align*}
	\left\|I_2[u,t_m,t_{m+1}]- \mathcal{Q}_2[u,t_m,t_{m+1}]\right\|_{L^2}\leq h^2 C,
\end{align*}
where $C>0$ depends on $\max_{t\in[t_m,t_{m+1}]}\|u(t)\|_{H^1}$. Moreover, for any {$u,w\in\mathcal{C}^0(t_m,t_{m+1};H^1(\mathbb{T}))$} we have:
\begin{align*}
\left\|\mathcal{Q}_2[u,t_m,t_{m+1}]- \mathcal{Q}_2[w,t_m,t_{m+1}]\right\|_{L^2}\leq D h \left(\left\|u(t_{m+1})-w(t_{m+1})\right\|_{H^1}+\left\|u(t_{m})-w(t_{m})\right\|_{H^1}\right),
\end{align*}
where $D>0$ depends on $\max\left\{\|u(t_m)\|_{H^1}+\|w(t_m)\|_{H^1},\|u(t_{m+1})\|_{H^1}+\|w(t_m)\|_{H^1}\right\}$.
\end{proposition}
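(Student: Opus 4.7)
The plan is to establish the local error via a Fourier-mode analysis in the spirit of Proposition~\ref{prop:quadrature_error_Q1}, but with additional care because of the bilinear, case-split structure of $\mathcal{Q}_2$. First, I would express both $I_2$ and $\mathcal{Q}_2$ mode by mode in the twisted variable $v(t)=e^{-it\partial_x^2}u(t)$, focusing on the regime $k_1^2\geq k_2^2$ (the other case is symmetric). In this regime, the per-mode contribution of the exact integral reads $e^{-it_m(k_1^2-k_2^2)}\int_0^h e^{-isk_1^2}\tilde\alpha(s)\tilde\beta(s)\,ds$ with $\tilde\alpha(s):=\hat{v}_{k_1}(t_m+s)$ and $\tilde\beta(s):=e^{isk_2^2}\overline{\hat{v}_{k_2}(t_m+s)}$, while the quadrature replaces $\tilde\alpha\tilde\beta$ by the product of endpoint averages $\bar{\tilde\alpha}\bar{\tilde\beta}$ and integrates $e^{-isk_1^2}$ exactly.

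The main task is to bound the per-mode error $E_{k_1,k_2}=\int_0^h e^{-isk_1^2}[\tilde\alpha(s)\tilde\beta(s)-\bar{\tilde\alpha}\bar{\tilde\beta}]\,ds$ and then sum via Parseval. I would decompose $E_{k_1,k_2}$ into a frozen-time contribution (where $\tilde\alpha,\tilde\beta$ are replaced by their values at $t_m$) and a time-variation remainder driven by the time derivatives of $\hat{v}_{k_j}$. The remainder is straightforward: by \eqref{eqn:twisted_variable_equation}, $|\partial_t\hat{v}_{k_j}|=\tfrac{1}{2}|\widehat{|u|^2u}_{k_j}|$ carries no Laplacian factor, so Parseval together with the algebra property $\|fg\|_{L^2}\lesssim\|f\|_{H^{1/2+}}\|g\|_{L^2}$ on $\mathbb{T}$ yields an $h^2\|u\|_{H^1}^4$ contribution. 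The hard part is the frozen-time piece. A direct computation with $\varphi_1$ shows that the per-mode discrepancy behaves like $h\,\min\big((hk_1^2)^2,1\big)\cdot|\hat{u}_{k_1}||\hat{u}_{k_2}|$, exploiting the case-split bound $k_2^2\leq k_1^2$ and the algebraic structure of the quadrature weights. To close this estimate at the $H^1$ level I would split the summation into a low-frequency part $hk_1^2\lesssim 1$, where the prefactor $h^3k_1^4$ effectively contributes two spatial derivatives (which are controlled because in this regime $k_1^2\lesssim h^{-1}$), and a high-frequency part $hk_1^2\gtrsim 1$, where the prefactor is only $\mathcal{O}(h)$ but one can trade $|\hat{u}_{k_j}|^2$ for $hk_j^2|\hat{u}_{k_j}|^2$ via $1\leq hk_1^2$. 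Combining the two regimes via Young's convolution inequality delivers $\|I_2-\mathcal{Q}_2\|_{L^2}\leq Ch^2$ with $C$ polynomial in $\max_{[t_m,t_{m+1}]}\|u\|_{H^1}$.

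For the stability bound the plan is considerably lighter: $\mathcal{Q}_2[\,\cdot\,,t_m,t_{m+1}]$ is bilinear in the endpoint values, so $\mathcal{Q}_2[u]-\mathcal{Q}_2[w]$ expands by bilinearity into a finite sum of terms in which one factor involves the increment $u-w$ at an endpoint and the other involves $u$ or $w$. Since $|h\varphi_1(\pm ihk^2)|\leq h$ uniformly in $k$, the same Parseval and Sobolev algebra arguments used above bound each such term in $L^2$ by $Dh\|u-w\|_{H^1}\big(\|u\|_{H^1}+\|w\|_{H^1}\big)$ at the relevant endpoint, giving the claimed Lipschitz estimate with $D$ controlled by the $H^1$ norms appearing in the statement. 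Only routine bookkeeping of the bilinear expansion is required here.
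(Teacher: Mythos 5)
Your overall strategy --- Fourier-mode analysis in the twisted variable, a splitting into an endpoint-average discrepancy plus a time-variation remainder controlled through \eqref{eqn:twisted_variable_equation}, the case split $k_1^2\geq k_2^2$, and bilinearity for the stability bound --- is the same as the paper's, and your treatment of the stability estimate is fine. However, the quantitative claims in your local-error argument do not line up with where the derivative losses actually occur, and as written the argument does not close at the $H^1$ level. In the branch $k_1^2\geq k_2^2$ the phase $e^{-isk_1^2}$ is integrated \emph{exactly} by both $I_2$ and $\mathcal{Q}_2$ (indeed $\int_0^h e^{-isk_1^2}\,\dd s=h\varphi_1(-ihk_1^2)$), so no factor $\min\bigl((hk_1^2)^2,1\bigr)$ ever appears; the only dangerous oscillation is the phase $e^{isk_2^2}$ hidden in $\tilde\beta(s)=e^{isk_2^2}\overline{\hat v_{k_2}(t_m+s)}$. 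This also contradicts your claim that the time-variation remainder ``carries no Laplacian factor'': $\partial_s\tilde\beta$ contains the term $ik_2^2\tilde\beta$ (only $\partial_t\hat v_{k_2}$ itself is derivative-free), and it is precisely here, and in $\tilde\beta(h)-\tilde\beta(0)$ inside your frozen piece, that the case split must be used, via $|e^{isk_2^2}-1|\leq sk_2^2\leq s|k_1|\,|k_2|$, which places exactly one derivative on each factor and yields the paper's $h^2$ bound with an $H^1\times H^1$ constant.

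Your proposed low/high frequency dichotomy in $k_1$ is consequently both unnecessary and insufficient: in the regime $hk_1^2\gtrsim 1$ the trade $1\leq hk_1^2$ deposits two derivatives on the $k_1$ factor alone (take $k_2=0$ to see that $h^2k_1^2|\hat u_{k_1}|\,|\hat u_0|$ cannot be summed against $\|u\|_{H^1}$), so that step would require $H^2$ data, defeating the purpose of the proposition. The genuinely derivative-free contributions are $\hat v_{k_1}(t_{m+1})-\hat v_{k_1}(t_m)$ and $\overline{\hat v_{k_2}(t_{m+1})-\hat v_{k_2}(t_m)}$, which are $O(h)$ by \eqref{eqn:twisted_variable_equation} with constants cubic in $\|u\|_{H^1}$ --- this is the midpoint estimate already used in Proposition~\ref{prop:quadrature_error_Q1}, and it is how the paper handles that piece. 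Repairing your write-up amounts to redoing the frequency bookkeeping along these lines; once the $k_2^2\leq|k_1|\,|k_2|$ distribution is in place, the low/high splitting can simply be deleted.
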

\begin{proof}
See Appendix~\ref{app:proof_of_error_Q2}.
\end{proof}

Thus our overall approximation of $\mathsf{H}_1$ can be summarized in the quadrature rule
\begin{align}\label{eqn:overall_low-reg_quadrature_magnus}
	\mathcal{Q}^{(\text{low-reg})}_h[\mathsf{H}_1;u](x)=\begin{pmatrix}
		0&-\Im \mathcal{Q}_1[u,t_m,t_{m+1}]&\Re\mathcal{Q}_1[u,t_m,t_{m+1}]\\
		\Im\mathcal{Q}_1[u,t_m,t_{m+1}]&0&-\frac{1}{2}\mathcal{Q}_2[u,t_m,t_{m+1}]\\
		-\Re\mathcal{Q}_1[u,t_m,t_{m+1}]&\frac{1}{2}\mathcal{Q}_2[u,t_m,t_{m+1}]&0
	\end{pmatrix}
\end{align}
which can be computed using $\mathcal{O}(N(\log N)^2)$ operations (where $N$ is the number of Fourier modes in our spatial discretisation). We call the resulting discrete map $\mathsf{y}\mapsto \exp\left(\mathcal{Q}^{(\text{low-reg})}_h[\mathsf{H}_1;\mathbf{u}](x)\right)\mathsf{y}$ the fast low-regularity Hasimoto (FLowRH) transform. From Props.~\ref{prop:quadrature_error_Q1} \& \ref{prop:quadrature_error_Q2} we deduce
\begin{cor}\label{cor:error_full_quadrature_low_regularity_regime} If {$u\in\mathcal{C}^0(0,T;H^{1}(\mathbb{T}))$ is the} solution to \eqref{eqn:NLS_in_hasimoto_transform} then
	\begin{align*}
		\left\|\mathsf{B}(h)-\mathcal{Q}^{(\text{low-reg})}_h[\mathsf{H}_1;u]\right\|_{L^2(\mathbb{T};\ell^2)}\leq Ch^2,
	\end{align*}
for some constant $C$ depending on $\sup_{t\in[t_m,t_{m+1}]}\|u(t)\|_{H^1}$. Moreover, for any {$u,w\in\mathcal{C}^0(t_m,t_{m+1};H^1(\mathbb{T}))$} we have
\begin{align*}
	\left\|\mathcal{Q}^{(\text{low-reg})}_h[\mathsf{H}_1;u]-\mathcal{Q}^{(\text{low-reg})}_h[\mathsf{H}_1;w]\right\|_{L^2(\mathbb{T};\ell^2)}\leq D h\left( \left\|u(t_m)-w(t_m)\right\|_{H^1}+\left\|u(t_{m+1})-w(t_{m+1})\right\|_{H^1}\right),
\end{align*}
for some constant $D>0$ depending on $\max\left\{\|u(t_m)\|_{H^1}+\|w(t_m)\|_{H^1},\|u(t_{m+1})\|_{H^1}+\|w(t_m)\|_{H^1}\right\}$.
\end{cor}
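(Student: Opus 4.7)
The corollary combines two ingredients: control of the Magnus truncation error (from replacing $\mathsf{B}(h)$ by $\mathsf{H}_1(h)$) and control of the quadrature error (from replacing the scalar integrals $I_1,I_2$ inside $\mathsf{H}_1$ by $\mathcal{Q}_1,\mathcal{Q}_2$). I would use the triangle inequality to split
\begin{align*}
\|\mathsf{B}(h)-\mathcal{Q}^{(\text{low-reg})}_h[\mathsf{H}_1;u]\|_{L^2(\mathbb{T};\ell^2)}\le\|\mathsf{B}(h)-\mathsf{H}_1(h)\|_{L^2(\mathbb{T};\ell^2)}+\|\mathsf{H}_1(h)-\mathcal{Q}^{(\text{low-reg})}_h[\mathsf{H}_1;u]\|_{L^2(\mathbb{T};\ell^2)}
\end{align*}
and treat the two terms separately.

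The quadrature term is immediate from the explicit form \eqref{eqn:overall_low-reg_quadrature_magnus}: $\mathsf{H}_1(h)-\mathcal{Q}^{(\text{low-reg})}_h[\mathsf{H}_1;u]$ is a skew-symmetric $3\times 3$ matrix whose nontrivial entries are, up to signs and a factor $1/2$, precisely $\Im(I_1-\mathcal{Q}_1),\ \Re(I_1-\mathcal{Q}_1),\ I_2-\mathcal{Q}_2$. Estimating the Frobenius $\ell^2$-norm entrywise and invoking the error bounds of Propositions \ref{prop:quadrature_error_Q1} and \ref{prop:quadrature_error_Q2} yields the claimed $O(h^2)$ estimate, with constant depending only on $\sup_{t\in[t_m,t_{m+1}]}\|u(t)\|_{H^1}$. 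The Lipschitz statement follows by exactly the same entrywise decomposition applied to $\mathcal{Q}^{(\text{low-reg})}_h[\mathsf{H}_1;u]-\mathcal{Q}^{(\text{low-reg})}_h[\mathsf{H}_1;w]$ together with the stability parts of those two propositions.

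For the truncation term, I would apply Theorem \ref{thm:truncated_magnus_error} pointwise in $x$ with $p=1$ to obtain $\|\mathsf{B}(h,x)-\mathsf{H}_1(h,x)\|_{\ell^2}\le Ch^2(\sup_{t\in[0,h]}\|\mathsf{A}(t,x)\|_{\ell^2})^2$ and then integrate in $x$. Since the entries of $\mathsf{A}(t,x)$ are $O(|u_x(t,x)|+|u(t,x)|^2)$ pointwise, $\|\mathsf{A}(t,x)\|_{\ell^2}^2$ is controlled pointwise by $|u_x|^2+|u|^4$; the Sobolev embedding $H^1(\mathbb{T})\hookrightarrow L^\infty(\mathbb{T})$ takes care of the $|u|^4$-contribution, leaving only the $|u_x|^2$-contribution to be dealt with in $L^2(\mathbb{T})$.

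The main obstacle is precisely this $|u_x|^2$-contribution, since naively $\||u_x|^2\|_{L^2(\mathbb{T})}=\|u_x\|_{L^4}^2$ is not controlled by $\|u\|_{H^1}$ alone. The way around it is to exploit the structure of the Magnus correction $\mathsf{B}(h)-\mathsf{H}_1(h)=\mathsf{H}_2(h)+O(h^3)$: the integrand of $\mathsf{H}_2$ is $\tfrac12[\mathsf{A}(\xi_1),\mathsf{A}(\xi_2)]$, whose diagonal entries vanish by skew-symmetry and whose off-diagonal entries separate into (i) clean $u_x\cdot|u|^2$ terms, bounded in $L^2(\mathbb{T})$ by Hölder as $\|u_x\|_{L^2}\|u\|_{L^\infty}^2\lesssim\|u\|_{H^1}^3$, and (ii) an antisymmetric bilinear term $\Im(\overline{u_x(\xi_1)}u_x(\xi_2))$. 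For the latter one integrates by parts in time and uses the NLS identity to trade one spatial derivative off $u_x$ onto lower-order quantities, so that a factor is always paired with a Sobolev-embedded $L^\infty$-factor; alternatively, one may strengthen the hypothesis slightly to $u\in\mathcal{C}^0(0,T;H^{1+\varepsilon}(\mathbb{T}))$ to directly control $u_x\cdot u_x$-products in $L^2$. In either case, combining with the two quadrature estimates closes the proof.
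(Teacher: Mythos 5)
Your decomposition is exactly the one the paper intends: the corollary is stated as an immediate consequence of Propositions~\ref{prop:quadrature_error_Q1} and \ref{prop:quadrature_error_Q2}, and your entrywise treatment of the skew-symmetric matrix \eqref{eqn:overall_low-reg_quadrature_magnus} for both the quadrature error and the Lipschitz/stability bound is correct and complete. The added value of your write-up is that you make explicit the truncation term $\|\mathsf{B}(h)-\mathsf{H}_1(h)\|_{L^2(\mathbb{T};\ell^2)}$, which the paper handles only implicitly (via Theorem~\ref{thm:truncated_magnus_error}, as in \eqref{eqn:local_error_truncated_magnus}), and you correctly identify the genuine subtlety there: the commutator in $\mathsf{H}_2$ contains the entry $\Im\bigl(\overline{u_x(\xi_1)}\,u_x(\xi_2)\bigr)$, whose $L^2_x$-norm is of size $\|u_x\|_{L^4}^2$ and is \emph{not} controlled by $\|u\|_{H^1}$ alone. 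However, your first proposed repair does not close: integrating by parts in time replaces $u_x$ by $\partial_t u_x = iu_{xxx}+\dots$ via the NLS, which \emph{raises} the spatial regularity required rather than lowering it, and the antisymmetry of the integrand only lets you write $\Im\bigl(\overline{u_x(\xi_1)}(u_x(\xi_2)-u_x(\xi_1))\bigr)$, whose estimation again needs temporal Hölder continuity of $u_x$, i.e.\ more spatial derivatives. The honest fix is your fallback: assume slightly more regularity, concretely $u\in\mathcal{C}^0(0,T;H^{5/4+})$ so that $u_x\in H^{1/4+}\hookrightarrow L^4(\mathbb{T})$ (note $H^{1+\varepsilon}$ with $\varepsilon<1/4$ is not enough). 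This is harmless for the paper's purposes, since the corollary is only invoked in Theorem~\ref{thm:local_error_low_reg_overall_method} under the hypothesis $u\in H^{1+\gamma}$ with $\gamma>1/2$, but it does mean the constant in the truncation part of the displayed bound should depend on $\sup_t\|u(t)\|_{H^{5/4}}$ (or the norm available in context) rather than on $\sup_t\|u(t)\|_{H^1}$ as literally stated.
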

{\begin{remark}
    In principle, higher order low-regularity Hasimoto transforms can be designed using similar ideas by including more terms in the Magnus series of $\mathsf{B}(t,x)$ and resolving the resulting integrals with bespoke quadrature rules similar to $\mathcal{Q}_1,\mathcal{Q}_2$ as designed above. For approximations up to second order (i.e. including terms up to $\mathsf{H}_2(t,x)$) this can be done using similar fast transforms as introduced in Appendix~\ref{app:fast_computation_of_index_restricted_convolutions} and the use of symmetric polynomial interpolants (cf. \cite[Section 3.2.1]{maierhoferschratz23}). However for terms involving at least three nested commutators such fast computations may no longer be possible, and new fast transform tools may have to be devised. In the interest of brevity this construction is not included here, but the development of a structured approach for the construction of higher order low-regularity Hasimoto transforms will form part of future research.
\end{remark}}
\subsection{Low-regularity integrator for SM equation and convergence analysis}
We can now write down our algorithm for the computation of $\mathbf{T}$ in the low-regularity regime, which will, as we show in Corollary~\ref{cor:global_convergence_low_regularity_regime} below, require only $u\in H^2$ for first order global convergence - much less than the $u\in H^3$ requirement of the algorithms described in section \ref{sec:CFL_free_high_regularity}.

\paragraph{Scheme B for the low-regularity approximation of $\mathbf{T}$:}

We use the notation introduced in the previous two sections. We fix $h$ the time step, and $T$ the end time and proceed as follows:

\begin{enumerate}
	\item From the given initial data $u_0(x), x\in\mathbb{T}$ we solve the NLS \eqref{eqn:NLS_in_hasimoto_transform} using the symmetric low-regularity integrator introduced in \eqref{eqn:symmetric_resonance_method} at $M$-time steps given by $t_{m}=hm$, where $M=T/h$. We denote those approximate values by $u^{(approx)}_m, m=0,\dots M$.
	\item From the given initial frame $\mathbf{T}_0(x),\mathbf{e}_{1,0}(x),\mathbf{e}_{2,0}(x),\,x\in\mathbb{T},$ we apply the low-regularity Magnus integrator arising from the approximation given in \eqref{eqn:overall_low-reg_quadrature_magnus}, the FLowRH transform, i.e.
	\begin{align*}
		\mathsf{y}_{m+1}=\Phi_{m\to m+1}(\mathsf{y}_m):=\exp\left(\mathcal{Q}^{(\text{low-reg})}_h[\mathsf{H}_1;(u^{(approx)}_{m},u^{(approx)}_{m+1})]\right)\mathsf{y}_m,
	\end{align*}
where $\mathsf{y}_m\approx\mathsf{y}(t_m)$ is the approximation of the full frame at time $t_m$ to obtain an approximation of $\mathbf{T}(t_m,x)$ at all time values $t_m=mh, 1\leq m\leq M$.
\end{enumerate}

\subsubsection*{Convergence analysis of the full method}\label{sec:convergence_results_low_reg_full_method}
The stability estimate Prop.~\ref{prop:stability_full_method} applies also in this case as no regularity assumptions were made. For the local error we have the following important result. Here we let again $\phi_{t_0\to t_1}:\mathbb{R}^3\rightarrow \mathbb{R}^3$ be the exact flow of the SM equation \eqref{eqn:SMEqn}, and let $\Phi^{(\text{low-reg})}_{m\to m+1}:\mathbb{R}^3\rightarrow \mathbb{R}^3$ be the map corresponding to our method as described in Scheme B above.

\begin{theorem}[Local error]\label{thm:local_error_low_reg_overall_method}
	Let $\gamma\in(1/2,1]$, $0\leq m\leq M-1$, and $R=\sup_{t\in[t_m,t_{m+1}]}\|u\|_{H^{1+\gamma}}$. Then there exists $h_0>0$ and $C>0$ depending on $\gamma, R$ such that for any $\mathsf{y}_0:\mathbb{T}\rightarrow O(n;\mathbb{R})$ (i.e. a function taking values in the set of orthogonal matrices) and any $0<h<h_0$ we have
	\begin{align*}
	\left\|\Phi_{m\to m+1}\left(\mathsf{y}_0\right)-\phi_{t_m\to t_{m+1}}\left(\mathsf{y}_0\right)\right\|_{L^{2}\left(\mathbb{T};\ell^2\right)}\leq C h^{1+\gamma}.
	\end{align*}
\end{theorem}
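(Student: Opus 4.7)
The plan is to mirror the argument of Thm.~\ref{thm:local_error_high_regularity} while substituting in the specialised low-regularity building blocks developed in section~\ref{sec:low_reg_hasimoto_transfrom}. Without loss of generality I would take $m=0$. Writing the exact flow as $\phi_{t_0\to t_1}(\mathsf{y}_0)(x)=\exp(\mathsf{B}(h,x))\mathsf{y}_0(x)$ and the numerical flow as $\Phi^{(\text{low-reg})}_{0\to 1}(\mathsf{y}_0)(x)=\exp(\mathcal{Q}^{(\text{low-reg})}_h[\mathsf{H}_1;(u^{(approx)}_0,u^{(approx)}_1)](x))\mathsf{y}_0(x)$, I would use local Lipschitz continuity of the matrix exponential together with $\|\mathsf{y}_0(x)\|_{\ell^2}=1$ (orthogonality) to reduce matters to bounding the difference of the exponents in $L^2(\mathbb{T};\ell^2)$.

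The triangle inequality then decomposes this difference into three pieces: (i) the Magnus truncation error $\mathsf{B}(h,\cdot)-\mathsf{H}_1(h,\cdot)$, (ii) the quadrature error $\mathsf{H}_1(h,\cdot)-\mathcal{Q}^{(\text{low-reg})}_h[\mathsf{H}_1;u]$ evaluated on the \emph{exact} NLS solution, and (iii) the propagated NLS discretisation error $\mathcal{Q}^{(\text{low-reg})}_h[\mathsf{H}_1;u]-\mathcal{Q}^{(\text{low-reg})}_h[\mathsf{H}_1;(u^{(approx)}_0,u^{(approx)}_1)]$. For (i) I would invoke Thm.~\ref{thm:truncated_magnus_error} with $p=1$, using that $\|\mathsf{A}(t,\cdot)\|_{L^2(\mathbb{T};\ell^2)}\lesssim \|u(t)\|_{H^1}$, to obtain a bound of order $h^{2}$. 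For (ii) I would apply the error estimate of Cor.~\ref{cor:error_full_quadrature_low_regularity_regime}, which again yields order $h^{2}$ from the $H^{1}$ regularity of $u$ guaranteed by $R<\infty$.

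The decisive piece is (iii). Here I would combine the stability estimate in Cor.~\ref{cor:error_full_quadrature_low_regularity_regime} (which produces a factor of $h$ times the $H^1$ distance of the two input NLS profiles) with Thm.~\ref{thm:low_regularity_convergence_thm_NLS} applied with the choice $s=1>1/2$ and the given $\gamma\in(1/2,1]$. Since by assumption $u\in \mathcal{C}^0(t_m,t_{m+1};H^{1+\gamma})$, this yields $\|u^{(approx)}_j-u(t_j)\|_{H^1}\le C h^{\gamma}$ for $j=m,m+1$, provided $h<h_0$ small enough (with $h_0$ furnished by Thm.~\ref{thm:low_regularity_convergence_thm_NLS}). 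Multiplying by the $h$ from the stability bound gives a contribution of order $h^{1+\gamma}$, which dominates the $O(h^{2})$ bounds from (i)--(ii) since $\gamma\le 1$.

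The main technical obstacle is regularity bookkeeping: the stability constant $D$ in Cor.~\ref{cor:error_full_quadrature_low_regularity_regime} depends on $\|u^{(approx)}_j\|_{H^1}$, so one has to first use Thm.~\ref{thm:low_regularity_convergence_thm_NLS} to guarantee that, for $h<h_0$, the approximate NLS values remain uniformly bounded in $H^1$ by $R+1$ (say), which feeds back into a uniform choice of $D$ and hence of $C$. Once this bootstrap is in place, collecting the three contributions yields the claimed $C h^{1+\gamma}$ bound with $C$ and $h_0$ depending only on $\gamma$ and $R$.
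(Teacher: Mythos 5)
Your proposal is correct and follows essentially the same route as the paper, which proves this result by repeating the argument of Thm.~\ref{thm:local_error_high_regularity} verbatim with Thm.~\ref{thm:convergence_thm_splitting_NLS} replaced by Thm.~\ref{thm:low_regularity_convergence_thm_NLS} and Thm.~\ref{thm:interpolatory_magnus_quadrature_error} replaced by Cor.~\ref{cor:error_full_quadrature_low_regularity_regime}; your three-term decomposition, the $h\cdot h^{\gamma}$ balance in the propagated NLS error, and the remark that the stability constant must be controlled via uniform $H^1$ bounds on the numerical NLS iterates all match the intended argument.
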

\begin{proof}This result follows analogously to Thm.~\ref{thm:local_error_high_regularity} by replacing the estimates from Thm.~\ref{thm:convergence_thm_splitting_NLS} with Thm.~\ref{thm:low_regularity_convergence_thm_NLS} and the ones from Thm.~\ref{thm:interpolatory_magnus_quadrature_error} by Corollary~\ref{cor:error_full_quadrature_low_regularity_regime}.
\end{proof}
From Thm.~\ref{thm:local_error_low_reg_overall_method} and Prop.~\ref{prop:stability_full_method} we obtain the following global convergence result of Scheme B:
\begin{cor}[Global convergence]\label{cor:global_convergence_low_regularity_regime} Let $\gamma\in(1/2,1]$, {suppose $u\in\mathcal{C}^0(0,T;H^{1+\gamma}(\mathbb{T}))$ is the solution to \eqref{eqn:NLS_in_hasimoto_transform} and $\mathbf{T},\mathbf{e}_1,\mathbf{e}_2\in\mathcal{C}^0(0,T;H^{1}(\mathbb{T}))$ is the solution to \eqref{eqn:3x3_matrix_formulation_temporal_hasimoto}. Let $R=\sup_{t\in[0,t_{M}]}\|u\|_{H^{1+\gamma}}$ and denote} by $\mathbf{T}_m$ the first row of $\mathsf{y}_m$ (computed using Scheme B). Then there exists $h_0>0, C>0$ depending on $\gamma, R$ such that for any $0<h<h_0$ and every $0\leq m\leq M$ we have
	\begin{align*}
		\left\|\mathbf{T}_m-\mathbf{T}(mh)\right\|_{L^{2}\left(\mathbb{T};\ell^2\right)}\leq C h^{\gamma}.
	\end{align*}
\end{cor}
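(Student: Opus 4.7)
The plan is to run a standard Lady Windermere's Fan argument, propagating the local error of Theorem~\ref{thm:local_error_low_reg_overall_method} forward in time using the isometry stability bound of Proposition~\ref{prop:stability_full_method}. Since $\mathbf{T}_m$ is the first row of $\mathsf{y}_m$, it suffices to bound $\|\mathsf{y}_m-\mathsf{y}(t_m)\|_{L^2(\mathbb{T};\ell^2)}$, where $\mathsf{y}$ denotes the exact flow \eqref{eqn:3x3_matrix_formulation_temporal_hasimoto} and $\mathsf{y}_m=\Phi^{(\text{low-reg})}_{m-1\to m}\circ\cdots\circ\Phi^{(\text{low-reg})}_{0\to 1}(\mathsf{y}_0)$.

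First, I would verify the hypothesis $\mathsf{y}(t_m)\in O(n;\mathbb{R})$ of Theorem~\ref{thm:local_error_low_reg_overall_method}: this is immediate from the skew-symmetry of $\mathsf{A}(t,x)$, which ensures the exact flow preserves orthogonality of the frame $\{\mathbf{T},\mathbf{e}_1,\mathbf{e}_2\}$ given that the initial frame is orthonormal. Setting $R=\sup_{t\in[0,T]}\|u(t)\|_{H^{1+\gamma}}$ (which is finite by the assumed regularity), Theorem~\ref{thm:local_error_low_reg_overall_method} then yields $h_0>0$ and $C_{\mathrm{loc}}>0$ depending only on $\gamma,R$ such that for every $0\leq m\leq M-1$ and every $0<h<h_0$,
\begin{align*}
\left\|\Phi^{(\text{low-reg})}_{m\to m+1}(\mathsf{y}(t_m))-\phi_{t_m\to t_{m+1}}(\mathsf{y}(t_m))\right\|_{L^2(\mathbb{T};\ell^2)}\leq C_{\mathrm{loc}}h^{1+\gamma}.
\end{align*}

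Next I would perform the telescoping step. Writing $e_m:=\mathsf{y}_m-\mathsf{y}(t_m)$, the identity
\begin{align*}
e_{m+1}=\bigl[\Phi^{(\text{low-reg})}_{m\to m+1}(\mathsf{y}_m)-\Phi^{(\text{low-reg})}_{m\to m+1}(\mathsf{y}(t_m))\bigr]+\bigl[\Phi^{(\text{low-reg})}_{m\to m+1}(\mathsf{y}(t_m))-\phi_{t_m\to t_{m+1}}(\mathsf{y}(t_m))\bigr]
\end{align*}
combined with Proposition~\ref{prop:stability_full_method} (applied pointwise in $x$ and then integrated, noting that the map in question is an isometry on $\ell^2$ for each $x$) gives
\begin{align*}
\|e_{m+1}\|_{L^2(\mathbb{T};\ell^2)}\leq \|e_m\|_{L^2(\mathbb{T};\ell^2)}+C_{\mathrm{loc}}h^{1+\gamma}.
\end{align*}
Since $e_0=0$, iterating this bound gives $\|e_m\|_{L^2(\mathbb{T};\ell^2)}\leq m\,C_{\mathrm{loc}}h^{1+\gamma}\leq T\,C_{\mathrm{loc}}h^{\gamma}$ for $0\leq m\leq M$. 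The conclusion follows with $C:=TC_{\mathrm{loc}}$ since $\|\mathbf{T}_m-\mathbf{T}(mh)\|_{L^2(\mathbb{T};\ell^2)}\leq \|e_m\|_{L^2(\mathbb{T};\ell^2)}$.

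The argument is essentially mechanical once the two main ingredients are in place, so there is no real obstacle; the only subtlety to emphasise is that the stability estimate is an \emph{exact} isometry rather than a Lipschitz bound with constant $1+Ch$, so no Gronwall-type exponential amplification appears and the local errors simply accumulate additively. The constraint $h<h_0$ is inherited directly from Theorem~\ref{thm:local_error_low_reg_overall_method} (itself inherited from the fixed-point solvability of the symmetric low-regularity NLS integrator in Theorem~\ref{thm:low_regularity_convergence_thm_NLS}), and all constants depend only on $\gamma$ and $R$ as claimed.
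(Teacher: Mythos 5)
Your proposal is correct and follows exactly the route the paper takes: the paper states that the corollary follows from Theorem~\ref{thm:local_error_low_reg_overall_method} and Proposition~\ref{prop:stability_full_method} via the standard Lady Windermere's fan argument, which is precisely the telescoping-plus-isometry-stability computation you spell out. Your added observations (orthogonality of the exact frame so the local error theorem applies along the exact trajectory, and the absence of Gronwall amplification because the stability bound is an exact isometry) are accurate fillings-in of details the paper leaves implicit.
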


\section{Numerical experiments}\label{sec:numerical_experiments}
Having understood the theoretical convergence properties we can now consider the performance of our proposed methods in practice. As reference for the state-of-the-art in the literature we use \cite{xie2020second} (semi-implicit unconditionally stable integrators for the SM equation) although important prior work is also given by \cite{weinan2001numerical}, \cite{de2009numerical} and \cite{buttke1988numerical}. Notably \cite{de2009numerical,buttke1988numerical} require stringend CFL-conditions for convergence (in the case of \cite{de2009numerical} to ensure stability and in the case of \cite{buttke1988numerical} to ensure solubility of the implicit equations).

\subsection{Smooth solutions to the SM equation}
In our first example we consider smooth initial conditions for the problem \eqref{eqn:SMEqn}. In particular we consider, motivated by the computational examples in prior work \cite{weinan2001numerical}, the initial condition
\begin{align}\label{eqn:smooth_initial_data}
    \mathbf{T}_0(x)=(\cos (2x)\sin(x),\sin(2x)\sin(x),\cos(x)).
\end{align}
In our numerical simulations we compared two versions of our Scheme A (corresponding to notation $p=2,3$ in terms of section \ref{sec:convergence_analysis_cfl_high_regularity_integrators}), and our scheme B with the first and second order algorithms introduced by Xie et al. \cite{xie2020second}. We note that the algorithm from \cite{xie2020second} is semi-implicit and as such requires, at each time step, the solution of a linear system which we did in our implementation using GMRES {with an analytical preconditioner}. All of our methods were implemented on an Intel(R) Core(TM) i7-10700 CPU @ 2.90GHz using 8 CPU cores where many calculations were parallelised (for instance the solution of the implicit equations in the method by \cite{xie2020second} is done in parallel as is step 2 in our schemes A and B). Finally, we note that in our implementation of \cite{xie2020second} we exploited the fact that we work on a periodic domain and compute all required derivatives using fast Fourier transform methods in exactly the same way as for our novel methods.

In the first instance we look at the convergence properties of the numerical schemes for a moderate spatial discretisation of size $N=256$. The reference solution was computed with $h=2^{-15}$ and $N=4096$ using our scheme A (order 4) and the results are shown in Fig.~\ref{fig:convergence_plots_smooth_N_128}. Clearly we see in Fig.~\ref{fig:convergence_plots_smooth_N_128_timestep} that all methods achieve the predicted theoretical convergence rates (note our scheme B is symmetric and thus at least of second order for smooth initial data). Furthermore, in Fig.~\ref{fig:convergence_plots_smooth_N_128_cputime} the computational advantage of the fully explicit nature of our Hasimoto transforms becomes apparent as all of our methods outperform the previous state-of-the-art by at least an order of magnitude in terms of computational time.

\begin{figure}[h!]
    \centering
    \begin{subfigure}{0.495\textwidth}
\includegraphics[width=0.95\textwidth]{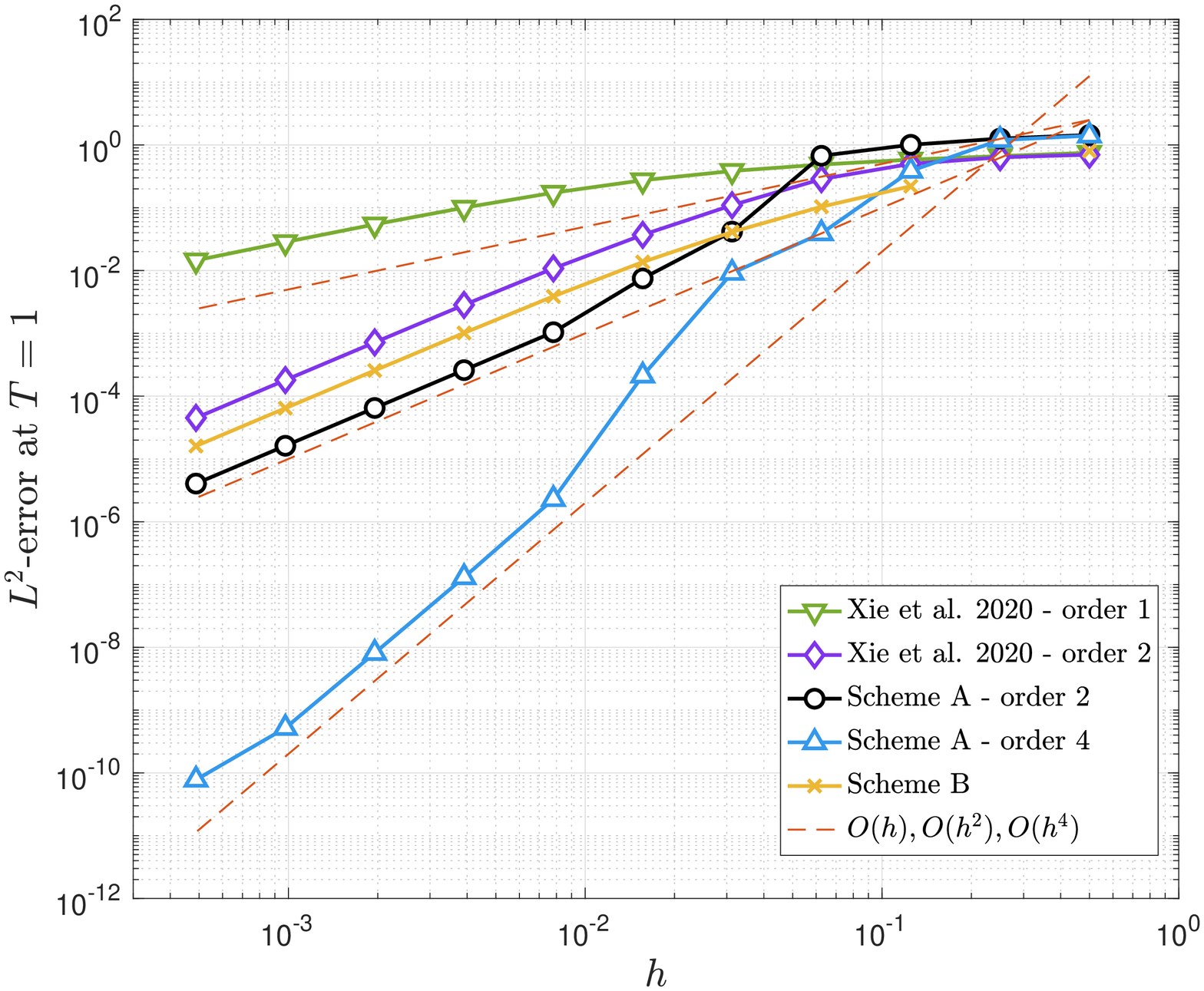}
		\caption{$L^2$-error in $\mathbf{T}$ at time $T=1$ versus the {step size}.}
  \label{fig:convergence_plots_smooth_N_128_timestep}
	\end{subfigure}
\begin{subfigure}{0.495\textwidth}
\includegraphics[width=0.95\textwidth]{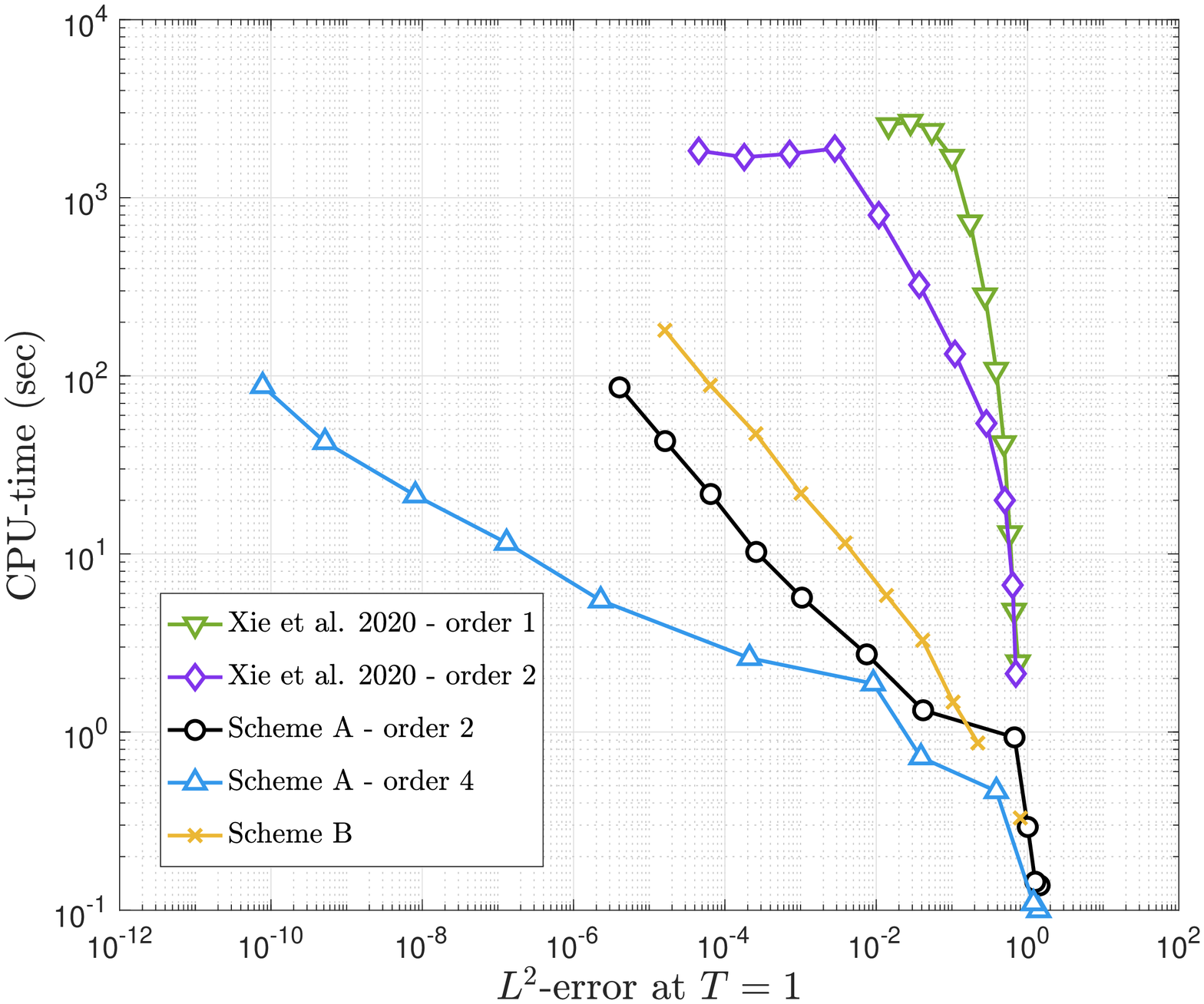}
		\caption{CPU time versus the $L^2$-error in $\mathbf{T}$ at time $T=1$.}
  \label{fig:convergence_plots_smooth_N_128_cputime}
	\end{subfigure}
 \caption{Convergence plots for smooth initial data \eqref{eqn:smooth_initial_data} and $N=256$.}
 \label{fig:convergence_plots_smooth_N_128}
\end{figure}

\begin{figure}[h!]
    \centering
    \begin{subfigure}{0.495\textwidth}
\includegraphics[width=0.95\textwidth]{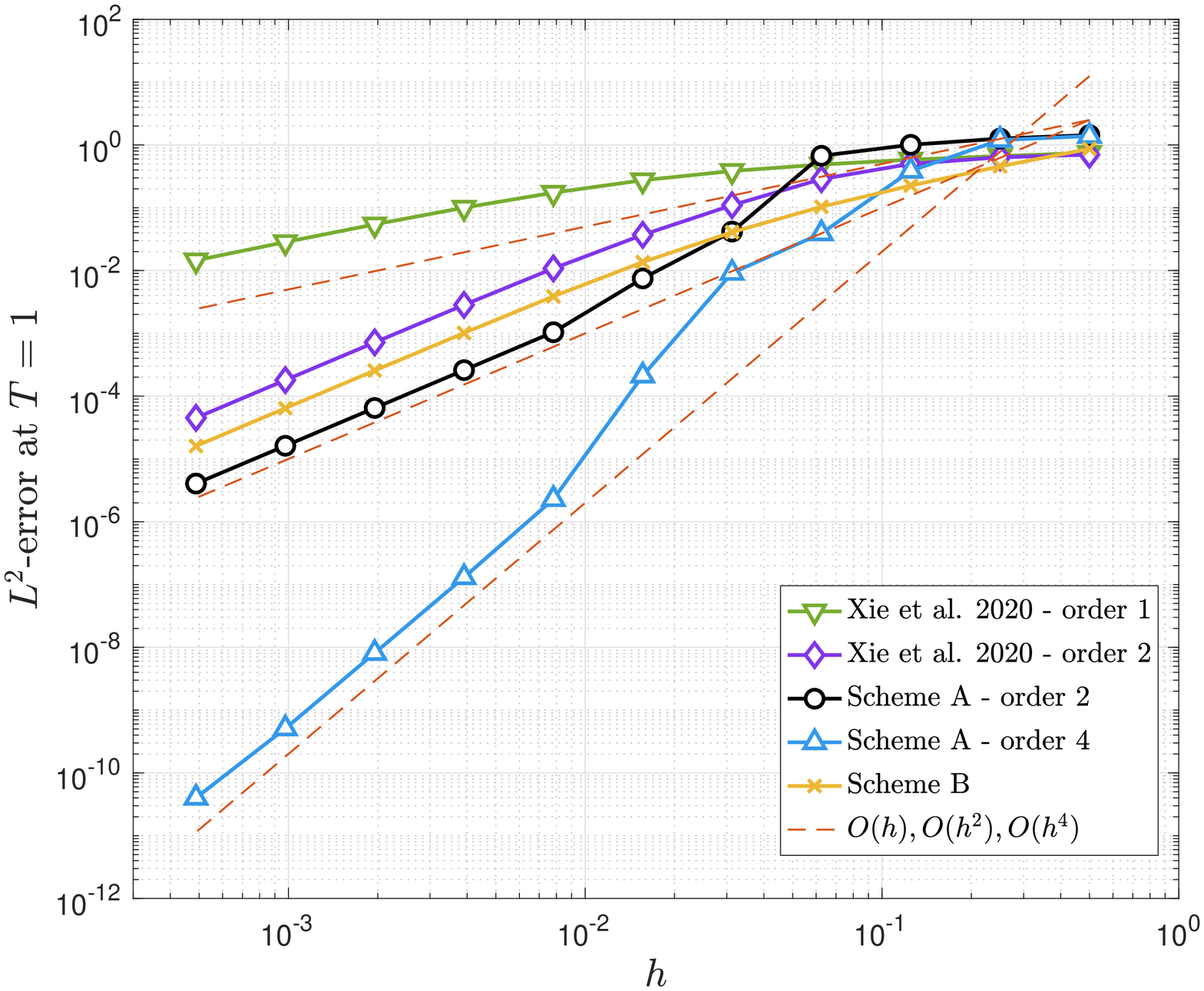}
		\caption{$L^2$-error in $\mathbf{T}$ at time $T=1$ versus the {step size}.}
  \label{fig:convergence_plots_smooth_N_1024_timestep}
	\end{subfigure}
\begin{subfigure}{0.495\textwidth}
\includegraphics[width=0.95\textwidth]{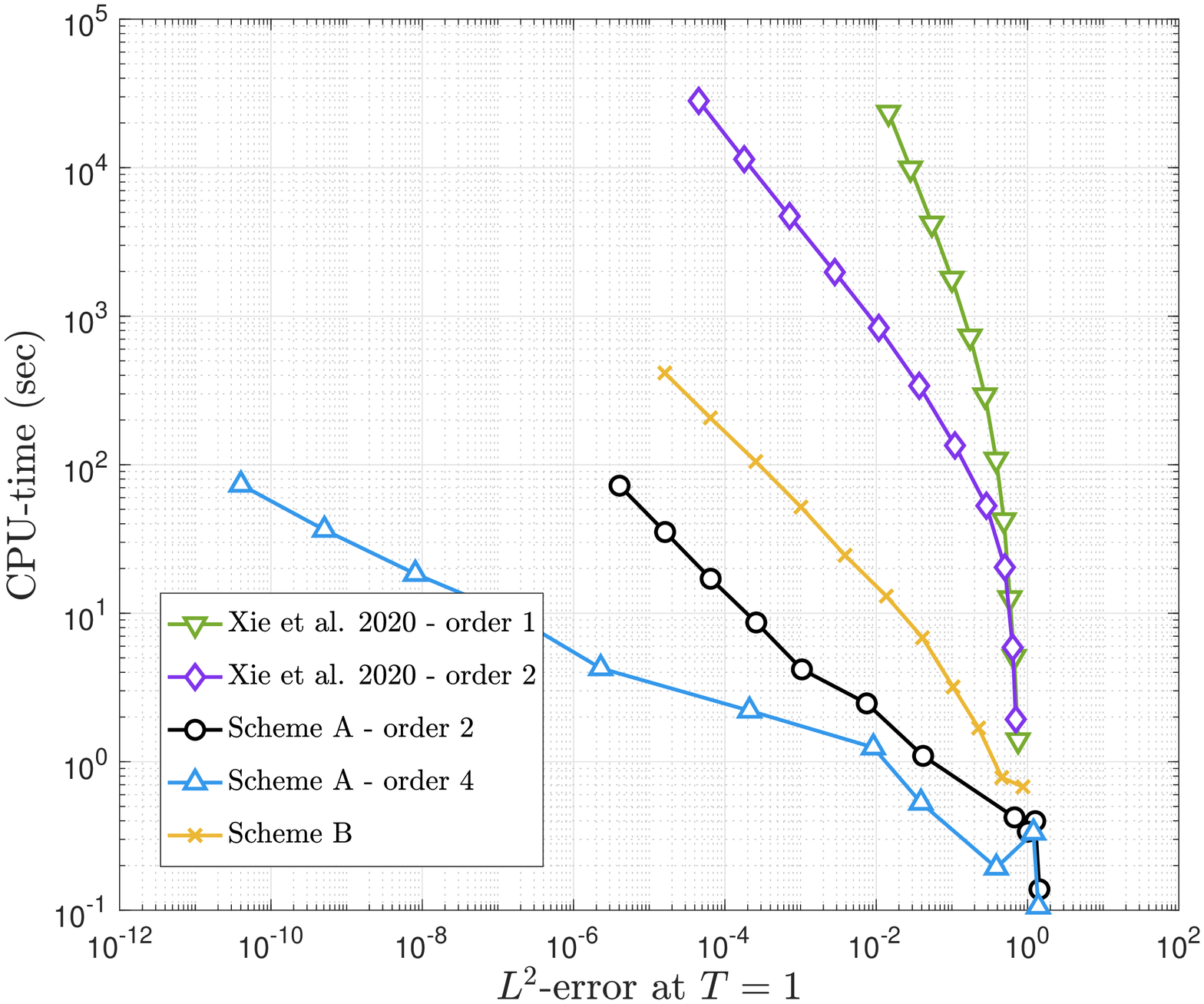}
		\caption{CPU time versus the $L^2$-error in $\mathbf{T}$ at time $T=1$.}
  \label{fig:convergence_plots_smooth_N_1024_cputime}
	\end{subfigure}
 \caption{Convergence plots for smooth initial data \eqref{eqn:smooth_initial_data} and $N=1024$.}
 \label{fig:convergence_plots_smooth_N_1024} 
\end{figure}

{Increasing the number of spatial discretisation points to $N=1024$ we see in Fig.~\ref{fig:convergence_plots_smooth_N_1024} that our new integrators are indeed} truly unconditionally stable and convergent and incur no significant increase in cost as $N$ increases (indeed as shown in earlier sections the cost of our scheme A scales like $\mathcal{O}(N\log N)$ and that of our scheme B scales like $\mathcal{O}(N(\log N)^2)$). {In particular, out of our new schemes, only scheme B involves an implicit aspect in \eqref{eqn:symmetric_resonance_method} which we show can be solved with fixed-point iteration at rates independent of $N$ (cf. Theorem~\ref{thm:implicit_NLS_integrator_fixed_pt_iterations}).} The reference solution in this experiment was again computed with $h=2^{-15}$ and $N=4096$ using our scheme A (order 4).

In the final Fig.~\ref{fig:structure_preservation_plots_smooth_N_128} we consider the absolute error in the $\mathcal{E},\mathcal{I}$ introduced in \eqref{eqn:definition_conserved_quantities_SM} for a fixed time step $h=1/200$ with spatial discretisation $N=128$. We observe that our symmetric methods indeed exhibit good preservation of these actions.
\begin{figure}[h!]
    \centering
    \begin{subfigure}{0.495\textwidth}
\includegraphics[width=0.95\textwidth]{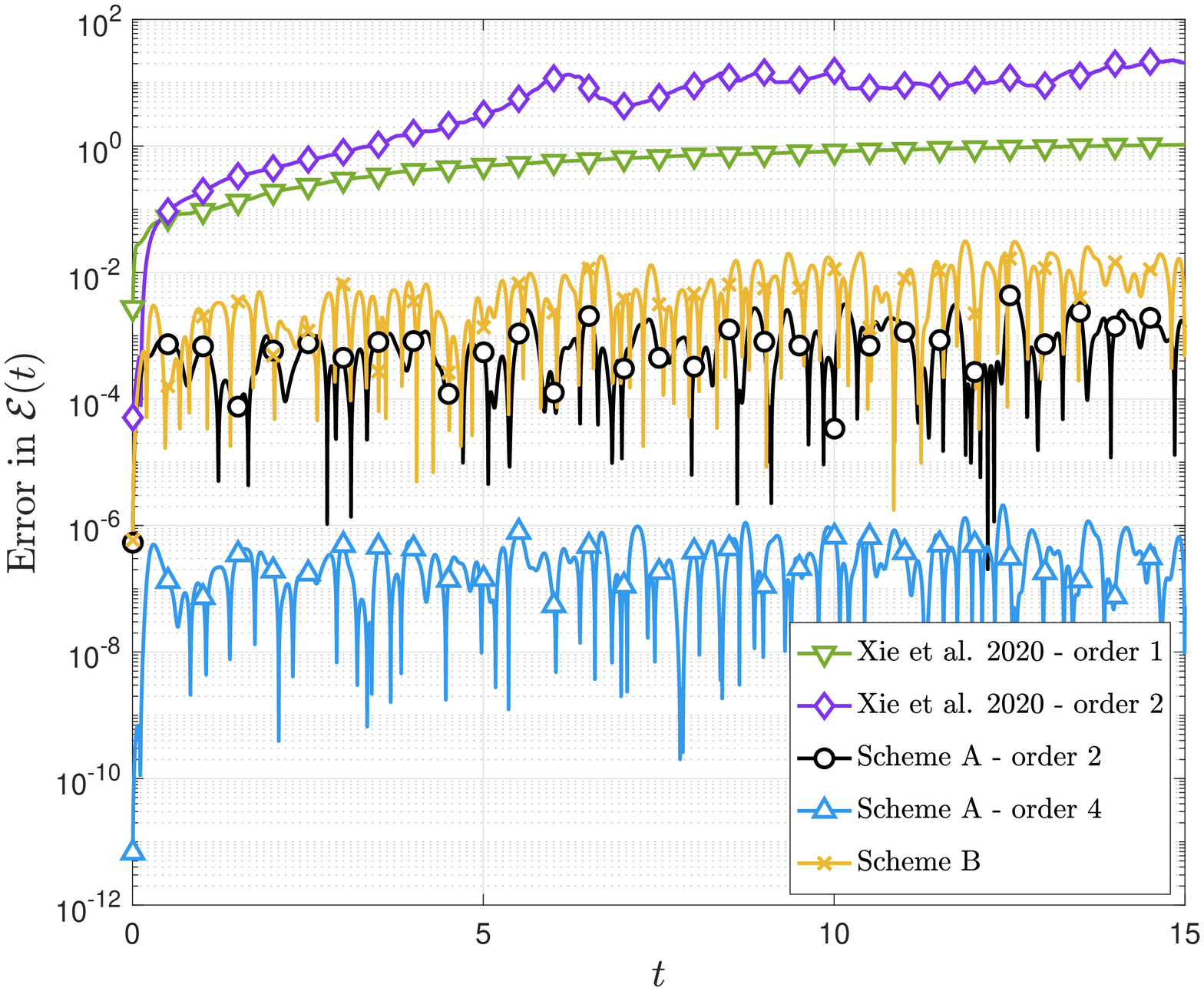}
		\caption{Absolute error in $\mathcal{E}(t)$.}
	\end{subfigure}
\begin{subfigure}{0.495\textwidth}
\includegraphics[width=0.95\textwidth]{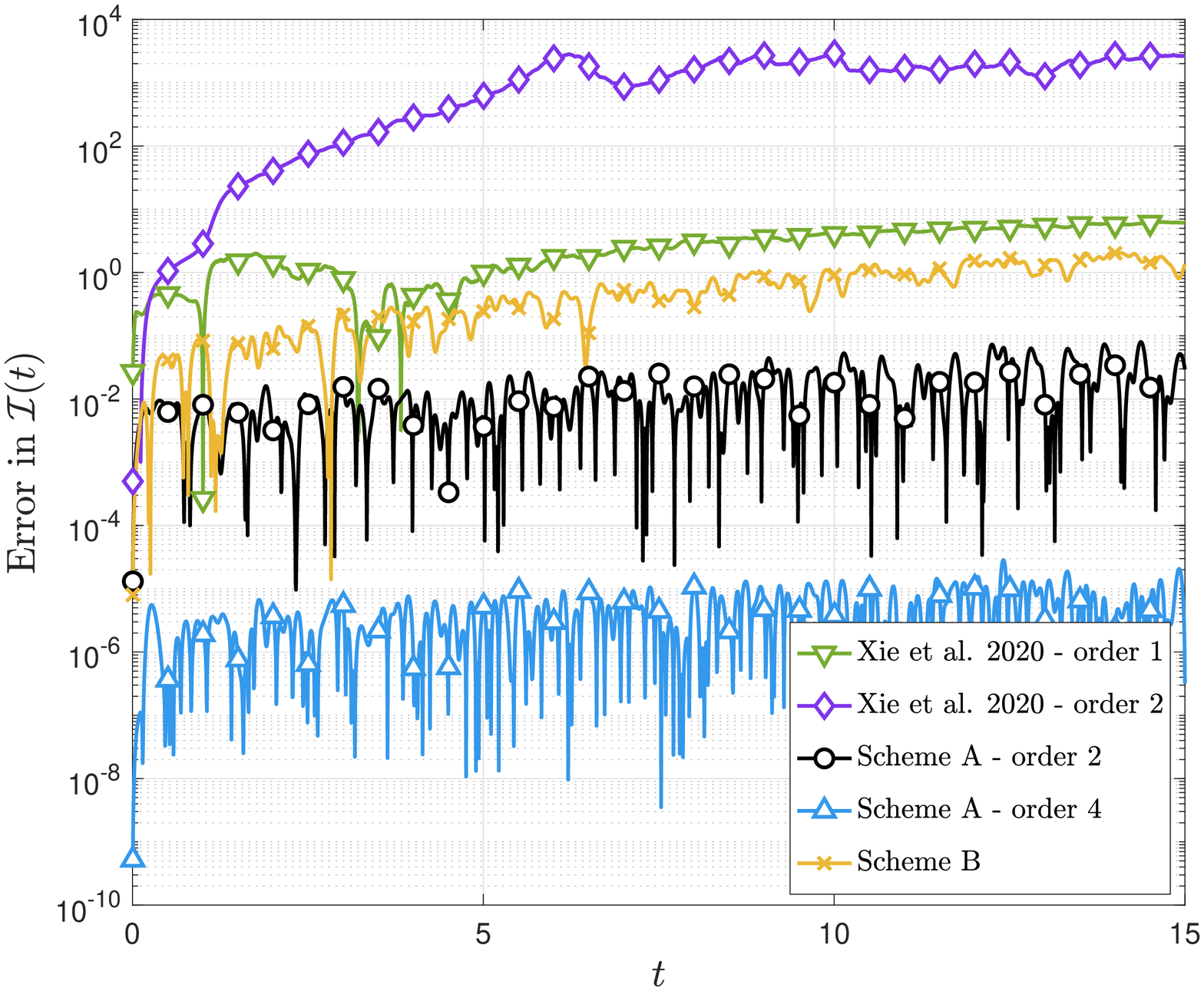}
		\caption{Absolute error in $\mathcal{I}(t)$.}
	\end{subfigure}
\caption{Error in the approximation of the conserved quantities for $\tau=1/200, N=128$ and initial conditions \eqref{eqn:smooth_initial_data}.}
 \label{fig:structure_preservation_plots_smooth_N_128} 
\end{figure}
\vspace{-0.5cm}
\subsection{Rough SM-solutions and application to the vortex filament equations}\label{sec:application_to_VFE}
As a second example we consider the evolution of a closed low-regularity vortex filament in an ideal fluid. The filament $\mathbf{X}(t):\mathbb{T}\rightarrow\mathbb{R}^3$ evolves according to the vortex filament equation (VFE)
\begin{align*}
	\partial_t\mathbf{X}=\partial_x\mathbf{X}\times\partial_{x}^2\mathbf{X}.
\end{align*}
The tangent vector field $\mathbf{T}=\partial_x\mathbf{X}$ satisfies the SM equation \eqref{eqn:SMEqn}. The VFE has been the subject of several numerical studies, amongst them \cite{buttke1988numerical} and \cite{de2009numerical}. In the present example we consider a low-regularity initial vortex filament which is flat and composed of two straight sections that are connected by half-circles as indicated in Fig.~\ref{fig:evolution_of_low-reg_vortex_filament} (which also features the time evolution of this vortex filament {with $h=0.01$ and for two different spatial discretisations}).

\begin{figure}[h!]
    \centering
    \begin{subfigure}{0.495\textwidth}
\includegraphics[width=1.0\textwidth]{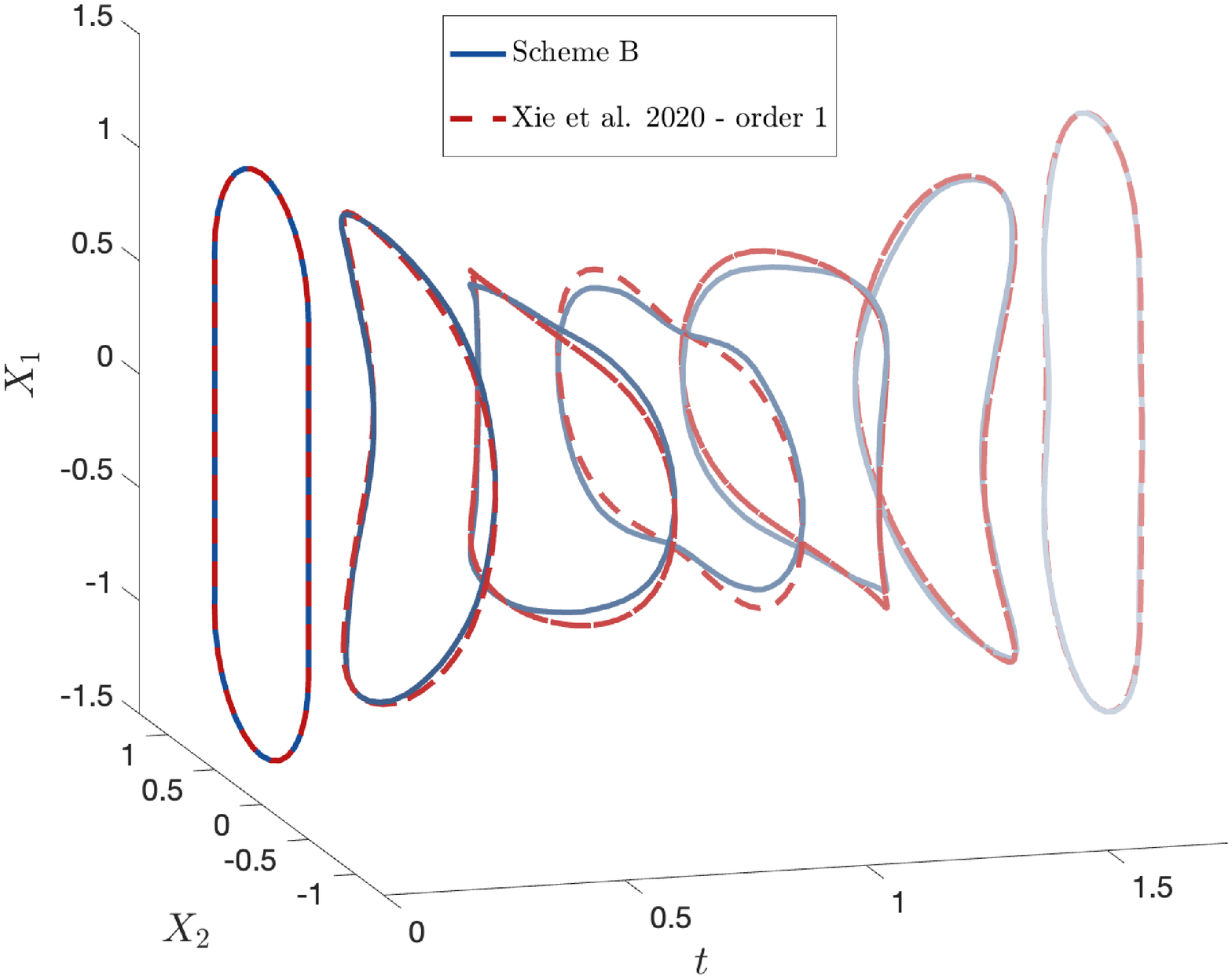}
		\caption{$N=64$.}
	\end{subfigure}
\begin{subfigure}{0.495\textwidth}
\includegraphics[width=1.0\textwidth]{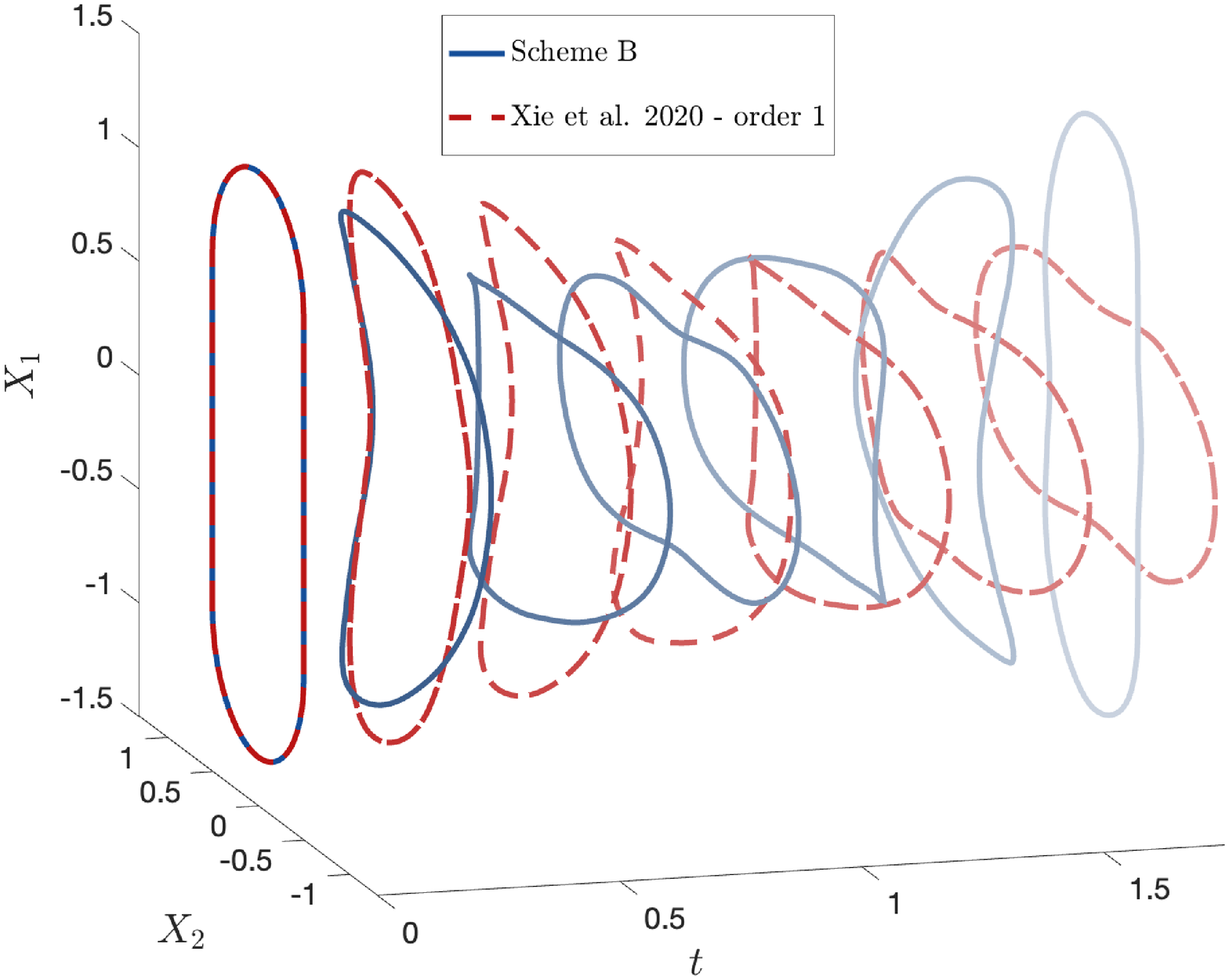}
		\caption{$N=256$.}
	\end{subfigure}
 \caption{Evolution of low-regularity vortex filament {for $h=0.01$}.}
 \label{fig:evolution_of_low-reg_vortex_filament}
\end{figure}

The tangent field initially takes the form
\begin{align}\label{eqn:low-reg_initial_data}
  \mathbf{T}_0(x)=\begin{cases}(\cos(2x+\pi),\sin(2x+\pi),0),&\text{if\ }-\pi\leq x<-\pi/2,\\
  (1,0,0),&\text{if\ }-\pi/2\leq x<0,\\
  (\cos(2x),\sin(2x),0),&\text{if\ }0\leq x<\pi/2,\\
  (-1,0,0),&\text{if\ }\pi/2\leq x<\pi,
  \end{cases}
\end{align}
so the curvature of the corresponding planar curve (i.e. $\tau_0\equiv 0$) is piecewise constant, which means that ${\mathbf{T}_0\in H^{3/2-\epsilon}}$ and $u_0\in H^{1/2-\epsilon}$ for all $\epsilon >0$. {We recall the existence of weak solutions for this class of initial conditions by \cite{jerrard_smets_2012}.} This is less regularity than required in Cor.~\ref{cor:global_convergence_low_regularity_regime}, but we will demonstrate that in practical performance our low-regularity scheme B can still achieve reliable convergence in this regime, while {all of the classical methods suffer from significant order reduction}. This clearly underlines the important properties of our new method. {In the following experiments the reference solution is computed using scheme B with $h=2^{-15}$ and $N=4096$.} Indeed in our convergence graph {with $N=256$} in Fig.~\ref{fig:convergence_plots_rough_N_128} we observe that {the convergence behaviour of all classical methods and even our scheme A is significantly worse than in the smooth case,} while our scheme B is able to achieve reliable convergence at rate roughly $\mathcal{O}(h^{1/2})$ {until the spatial discretisation error becomes dominant and the error thus stagnates at roughly $0.05$}. {Although the other methods exhibit some convergence until the spatial discretisation error is attained the convergence appears to be much slower and more unreliable than in the smooth case.} {Note the spatial discretisation error was not visible in Fig.~\ref{fig:convergence_plots_smooth_N_128} and Fig.~\ref{fig:convergence_plots_smooth_N_1024} because it is much smaller in the smooth case due to the rapid decay of Fourier coefficients in the solution. {As we increase the number of spatial discretisation points to $N=1024$ (cf. Fig.~\ref{fig:convergence_plots_rough_N_1024}) the spatial discretisation error becomes smaller and the error curve for our scheme B is consequently lower, while for the remaining methods the effect of order reduction becomes even more pronounced and the latter appear to require even smaller values of $h$ to achieve a comparable degree of convergence.}} {The problem experienced with prior work, for example Xie et al. \cite{xie2020second}, in this low-regularity regime is also visible in Fig.~\ref{fig:evolution_of_low-reg_vortex_filament} which shows the detrimental effect an increase in $N$ has on the approximation for fixed $h$, while the novel scheme B is able to reliably approximate the solution even for large values of $N$ without deteriotion of its convergence properties with respect to $h$.}

\begin{figure}[h!]
    \centering
    \begin{subfigure}{0.495\textwidth}
\includegraphics[width=0.95\textwidth]{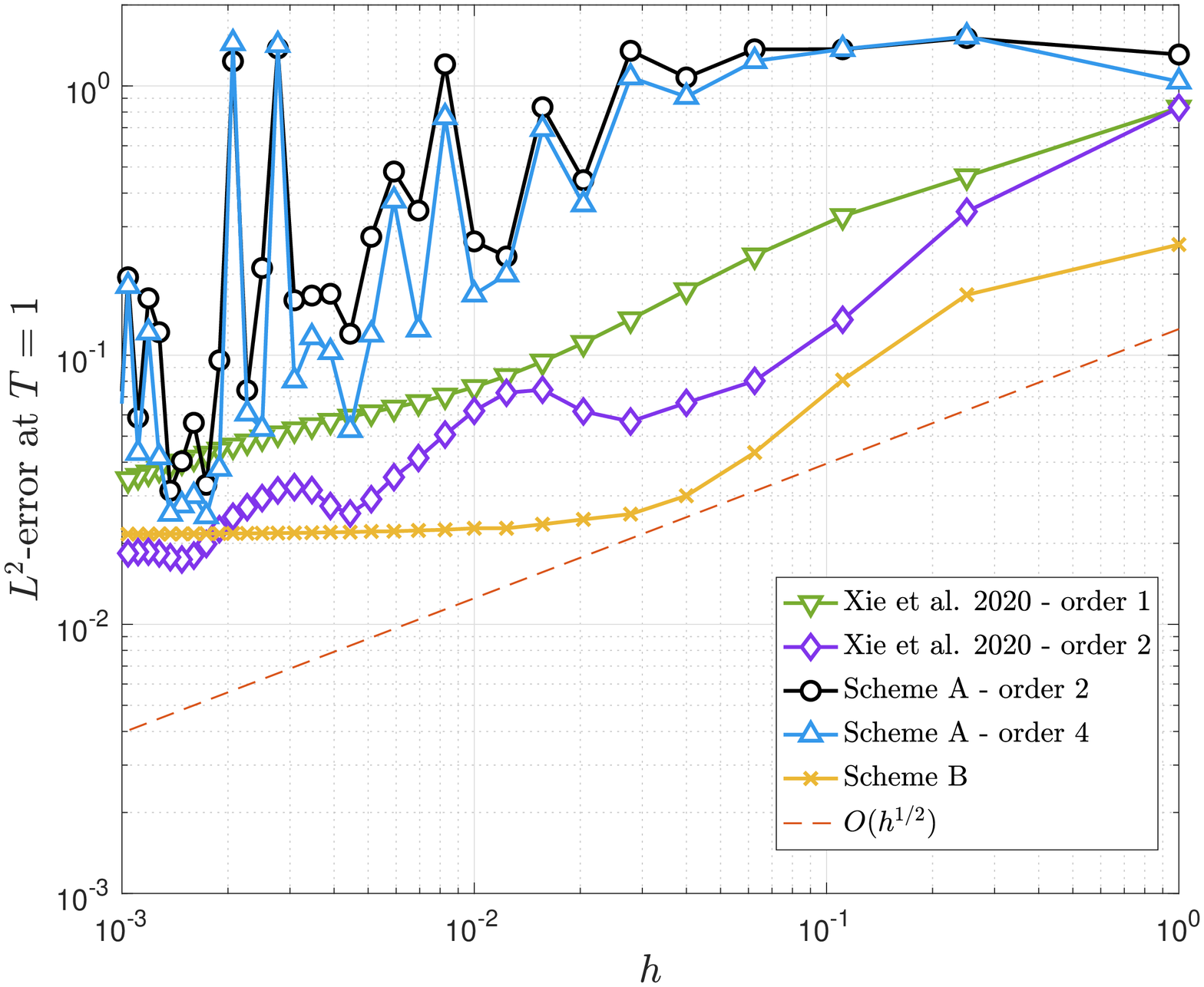}
		\caption{$L^2$-error in $\mathbf{T}$ at time $T=1$ versus the {step size}.}
  \label{fig:convergence_plots_rough_N_128_timestep}
	\end{subfigure}
\begin{subfigure}{0.495\textwidth}
\includegraphics[width=0.95\textwidth]{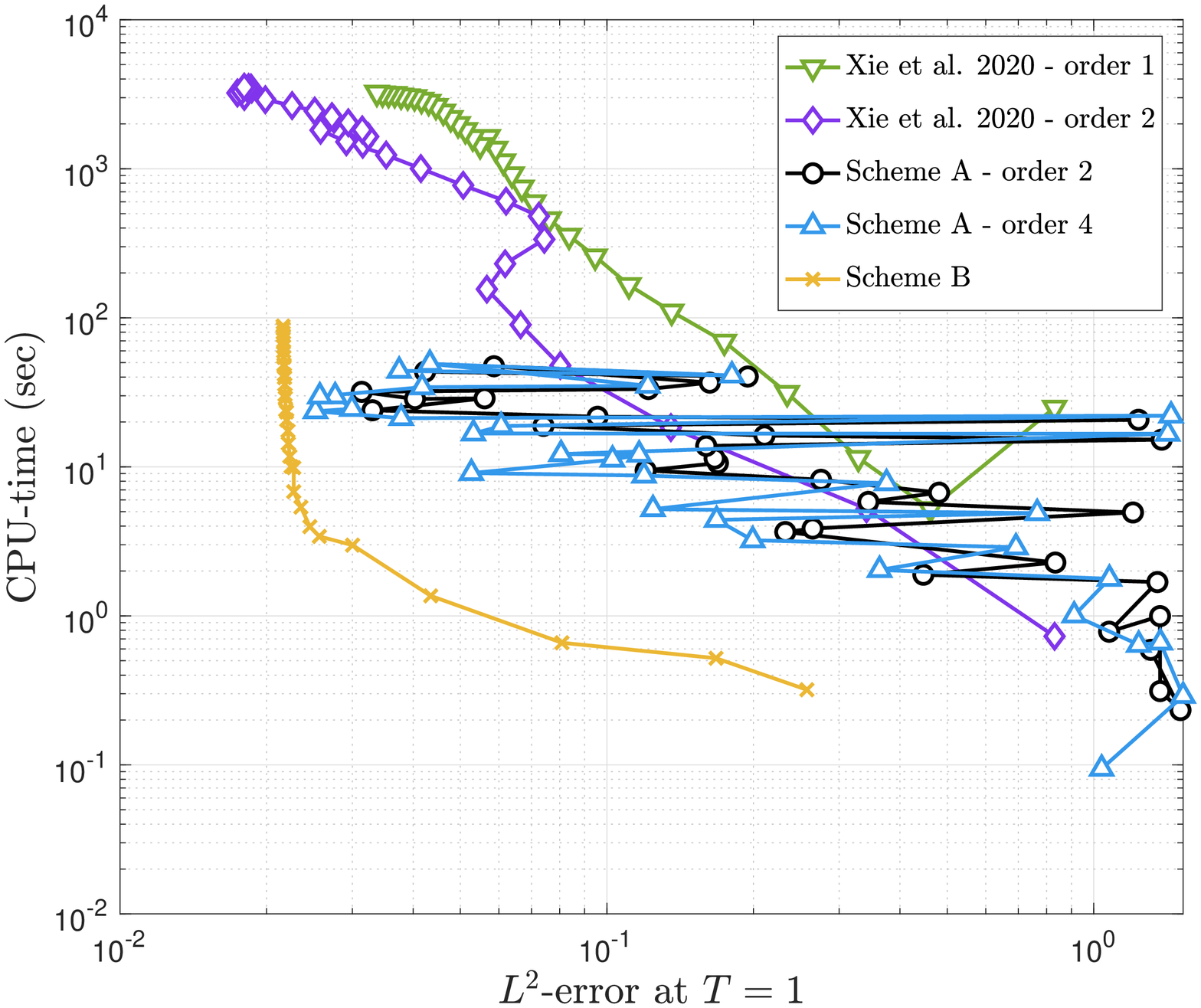}
		\caption{CPU time versus the $L^2$-error in $\mathbf{T}$ at time $T=1$.}
  \label{fig:convergence_plots_rough_N_128_cputime}
	\end{subfigure}
 \caption{Convergence plots for low-regularity initial data \eqref{eqn:low-reg_initial_data} and $N=256$.}
 \label{fig:convergence_plots_rough_N_128} 
\end{figure}

\begin{figure}[h!]
    \centering
    \begin{subfigure}{0.495\textwidth}
\includegraphics[width=0.95\textwidth]{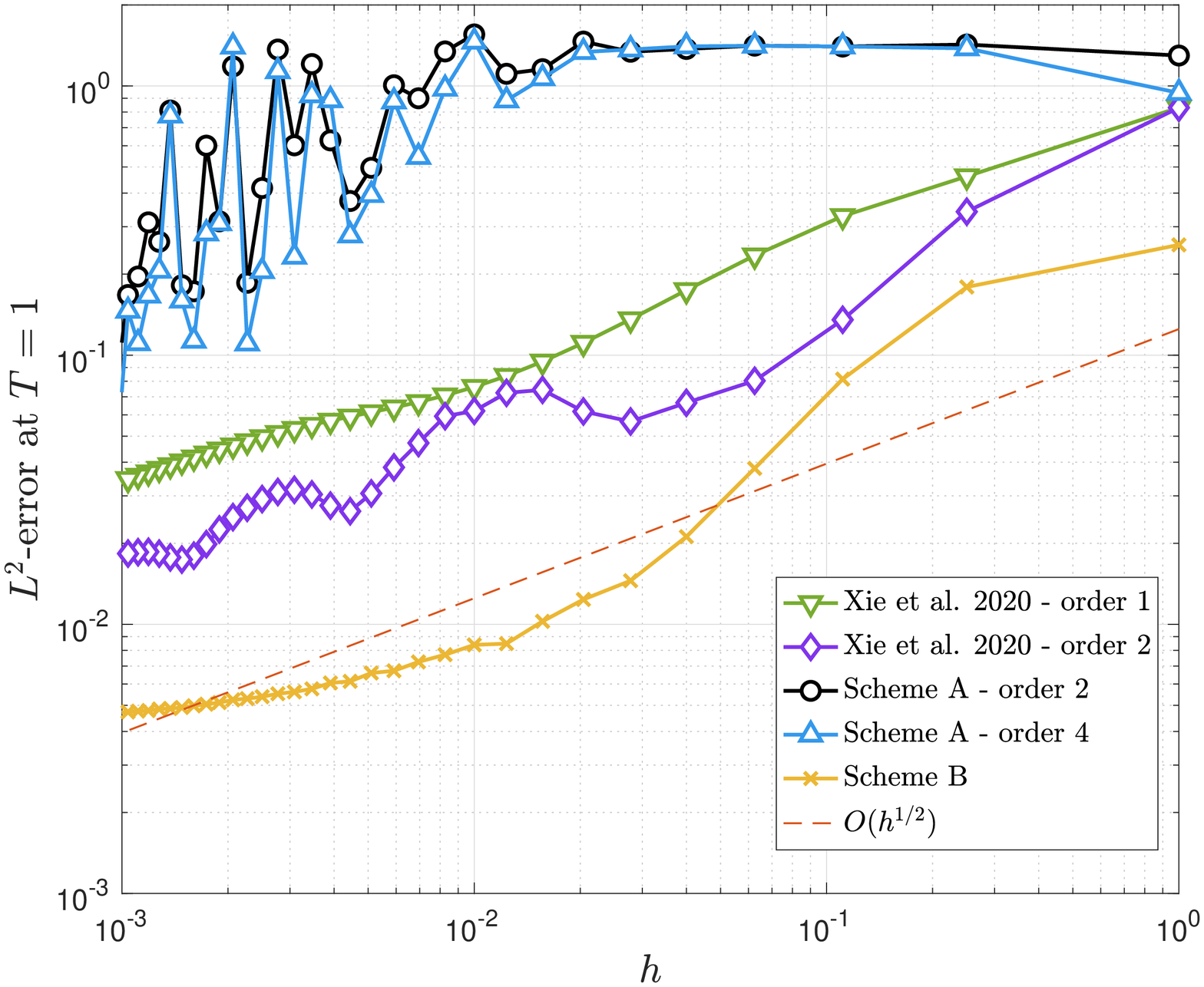}
		\caption{$L^2$-error in $\mathbf{T}$ at time $T=1$ versus the {step size}.}
  \label{fig:convergence_plots_rough_N_1024_timestep}
	\end{subfigure}
\begin{subfigure}{0.495\textwidth}
\includegraphics[width=0.95\textwidth]{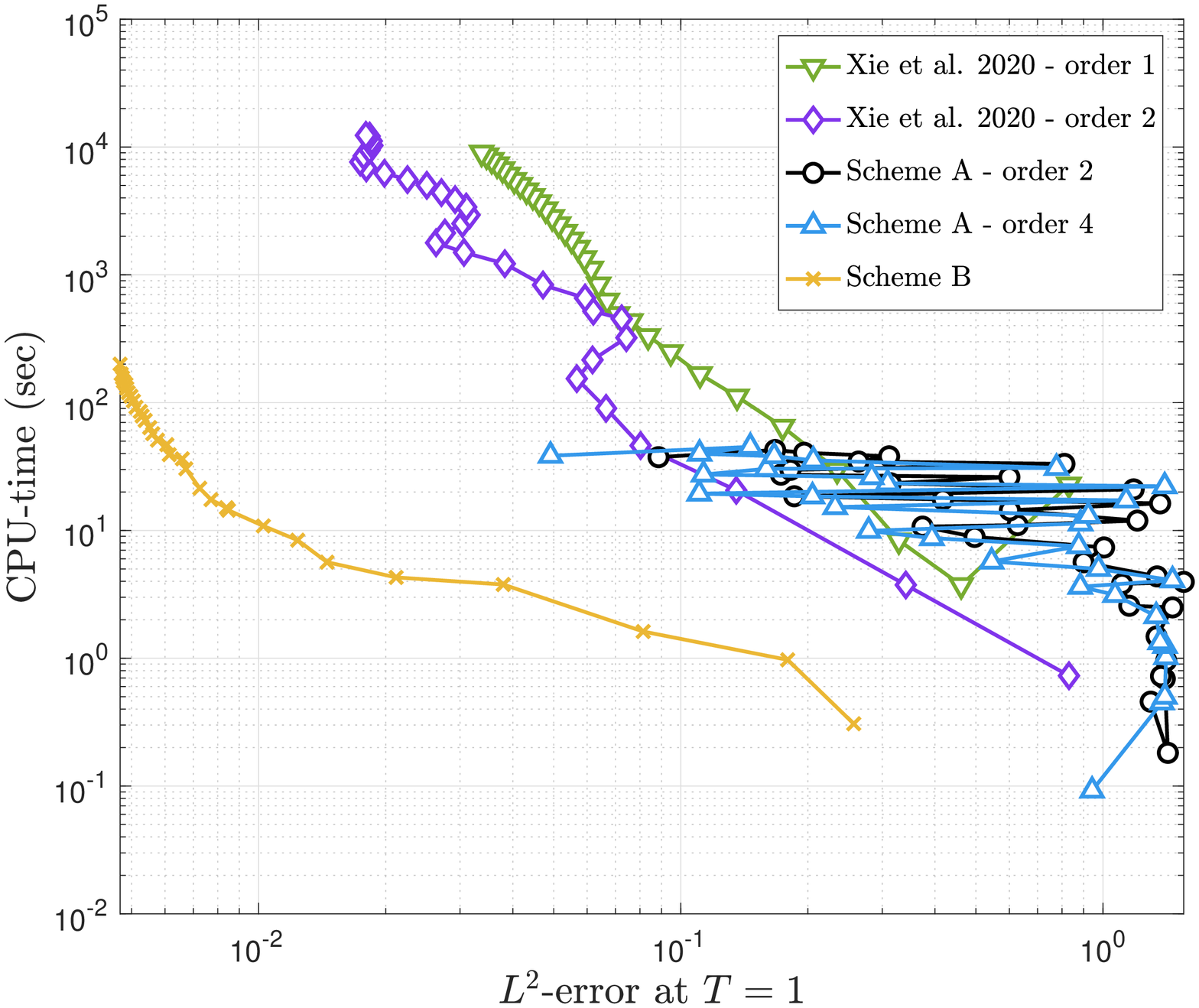}
		\caption{CPU time versus the $L^2$-error in $\mathbf{T}$ at time $T=1$.}
  \label{fig:convergence_plots_rough_N_1024_cputime}
	\end{subfigure}
 \caption{Convergence plots for low-regularity initial data \eqref{eqn:low-reg_initial_data} and $N=1024$.}
 \label{fig:convergence_plots_rough_N_1024} 
\end{figure}

Finally, we note that for this low-regularity regime we have ${\partial_{x}^2}\mathbf{T}\notin L^{2}$ so we cannot make sense of the action $\mathcal{I}$. Nevertheless, we can look at the absolute error in $\mathcal{E}$ and we find that our symmetric low-regularity integrator scheme B appears to preserve the action reasonably well too (cf. Fig.~\ref{fig:structure_preservation_plot_rough_N_128}).
\begin{figure}[h!]
    \centering
\includegraphics[width=0.45\textwidth]{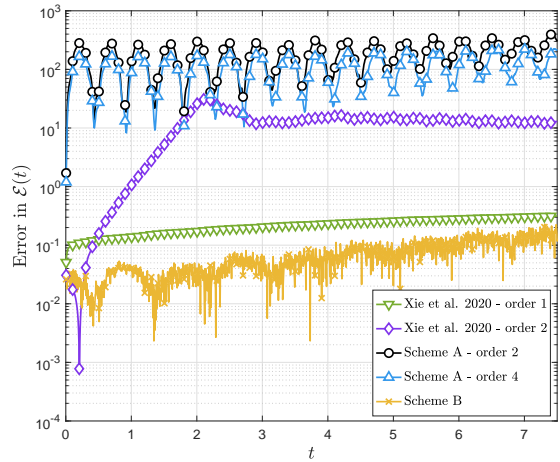}
		\caption{Absolute error in the approximation of $\mathcal{E}(t)$ for $h=1/200, N=128$.}
  \label{fig:structure_preservation_plot_rough_N_128} 
\end{figure}

\newpage\section{Concluding remarks}\label{sec:conclusions}

In this manuscript we presented a novel numerical approach to the SM equation by exploiting the Hasimoto transform. Using this nonlinear transform we were able to design the first fully explicit symmetric unconditionally stable numerical schemes for the SM equation with values in $\mathbb{S}^2$, which notably can be designed to arbitrary order. We also developed a tailored low-regularity integrator for the SM (including the FLowRH transform) which extends the computational possibilities to regimes which were not accessible with prior methods. Our rigorous error analysis and computational experiments demonstrate that our novel methodology is able to significantly outperform the current state-of-the-art.

As a possible direction of future work we note that our algorithm for the fast computation of index restricted convolution-type sums, which we described in Appendix~\ref{app:fast_computation_of_index_restricted_convolutions}, opens up a plethora of new possibilities for the design of low-regularity integrators of dispersive partial differential equations. Indeed, until now the design of such integrators (and in particular resonance-based schemes) was restricted to a composition of diagonal operations in Fourier and physical space together with a fast Fourier transform to switch between these pictures and the present concept of fast computation of index restricted convolution type expressions may extend more generally to a wider class of integrators for dispersive systems.
\vspace{-0.2cm}
\section*{Acknowledgements}
The authors would like to thank Daan Huybrechs (KU Leuven), Arieh Iserles (University of Cambridge), Maryna Kachanovska (INRIA Saclay) and Luis Vega (Basque Center for Applied Mathematics) for several interesting and helpful discussions. VB gratefully acknowledges support from the Institut Universitaire
de France membership and from the French ANR project SingFlows. {GM and KS gratefully acknowledge funding from the European Research Council (ERC) under the European
	Union’s Horizon 2020 research and innovation programme (grant agreement No.\ 850941). GM additionally gratefully acknowledges funding from the European Union’s Horizon Europe research and innovation programme under the Marie Skłodowska--Curie grant agreement No.\ 101064261.}
\begin{appendices}
	\section{Proof of Theorem~\ref{thm:low_regularity_convergence_thm_NLS}}\label{app:proof_of_convergence_thm_splitting_NLS}
	We recall the statement of Thm.~\ref{thm:low_regularity_convergence_thm_NLS}:
	{\begin{theorem}\label{thm_app:low_regularity_convergence_thm_NLS} Fix $s>1/2$ (corresponding to the norm in which we measure convergence) and $\gamma\in(0,1]$ (corresponding to the convergence order). If \eqref{eqn:NLS_in_hasimoto_transform} has a solution $u\in\mathcal{C}^0(0,T;H^{s+\gamma}(\mathbb{T}))$ then $\exists$ $C,h_0>0$ depending on $s,\gamma, \sup_{t\in[0,T]}\|u\|_{H^{s+\gamma}}$, such that for any $0< h<h_0$ we have
\begin{align*}
			\left\|u^m-u(t_m)\right\|_{H^s}\leq Ch^\gamma, \quad \forall\, 0\leq m\leq \left\lfloor\frac{T}{h}\right\rfloor,
		\end{align*}
  where $t_m=h m, m=0,\dots,\lfloor T/h\rfloor,$ and $u^m$ is computed using the method \eqref{eqn:symmetric_resonance_method}.
\end{theorem}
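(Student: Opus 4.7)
The plan is to combine a sharp low-regularity local error estimate with a Lipschitz stability bound in $H^s$, and then close the argument by a standard Lady Windermere's fan telescoping over $M = \lfloor T/h\rfloor$ steps. The structural observation that drives everything is that \eqref{eqn:symmetric_resonance_method} is exactly the symmetric average of the forward Ostermann--Schratz first-order low-regularity integrator
\[
\Phi^{\rightarrow}_h(w) := e^{ih\partial_x^2}w + \tfrac{ih}{2}e^{ih\partial_x^2}\bigl[w^2\,\varphi_1(-ih\partial_x^2)\bar w\bigr],
\]
and the corresponding time-reversed step written in its implicit form with $u^{m+1}$ on the right. Before anything else, because $s > 1/2$ and hence $H^s(\mathbb{T})$ is a Banach algebra while $e^{ih\partial_x^2}$ and $\varphi_1(\pm ih\partial_x^2)$ act as uniformly bounded operators on $H^s$, Banach's fixed-point theorem applied to the right-hand side of \eqref{eqn:symmetric_resonance_method} on a ball around $e^{ih\partial_x^2}u^m$ yields existence and uniqueness of $u^{m+1}\in H^s$ for any $h<h_0(\|u^m\|_{H^s})$ --- this is the content already anticipated in Theorem~\ref{thm:implicit_NLS_integrator_fixed_pt_iterations}.

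For the local error I would work with the twisted variable $v(t) = e^{-it\partial_x^2}u(t)$ of \eqref{eqn:twisted_variable_equation} and Duhamel's formula over $[t_m,t_{m+1}]$. On the Fourier side the cubic convolution carries the resonance phase $\phi(k_1,k_2,k_3) = 2(k_1-k_2)(k_2-k_3)$. Following the resonance analysis of \cite{ostermann2018low} (see also \cite{bruned2022resonance}), first replace $v(t_m+s)$ by $v(t_m)$ inside the Duhamel integral (at a cost of $O(h)$, using the $H^s$-algebra and the evolution equation), and then isolate from $\phi$ the dominant piece $-k^2$ whose exponential integrates exactly to produce the operator $\varphi_1(-ih\partial_x^2)$. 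The non-resonant remainder is controlled via the bilinear bound $\|fg\|_{H^s}\lesssim\|f\|_{H^s}\|g\|_{H^s}$ combined with the elementary identity $\varphi_1(iz)-1 = iz\,\varphi_2(iz)$, which trades one spatial derivative for one factor of $h$; the fractional order $\gamma\in(0,1]$ is then obtained either directly from a fractional bilinear estimate or by interpolation between the endpoint $\gamma=1$ (first-order convergence under $H^{s+1}$, as in \cite{ostermann2018low}) and the trivial $\gamma=0$ boundedness. This delivers
\[
\bigl\|\Phi^{\rightarrow}_h(u(t_m)) - u(t_{m+1})\bigr\|_{H^s} \leq C h^{1+\gamma},
\]
and by time-reversal the same bound holds for the implicit backward step $\tilde u^{m+1} = e^{ih\partial_x^2}u(t_m) + \tfrac{ih}{2}[(\tilde u^{m+1})^2\varphi_1(ih\partial_x^2)\overline{\tilde u^{m+1}}]$ versus $u(t_{m+1})$.

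To transfer these two one-sided estimates to the symmetrised scheme, write $u^{m+1} = u(t_{m+1}) + e$ and substitute into \eqref{eqn:symmetric_resonance_method} with $u^m = u(t_m)$; the forward and backward residuals contribute $\tfrac{1}{2}A + \tfrac{1}{2}B_0$ with $\|A\|_{H^s} + \|B_0\|_{H^s} \lesssim h^{1+\gamma}$, while the nonlinearity evaluated at $u(t_{m+1})+e$ in place of $u(t_{m+1})$ produces an $O(h)$ contraction in $e$. A small fixed-point argument in $e$ then yields the local error bound $\|u^{m+1} - u(t_{m+1})\|_{H^s} \leq C h^{1+\gamma}$. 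Lipschitz stability in $H^s$ on a ball follows from the same Banach-algebra estimates: two numerical trajectories starting from $u^m,\tilde u^m$ satisfy $\|u^{m+1}-\tilde u^{m+1}\|_{H^s} \leq (1+Ch)\|u^m - \tilde u^m\|_{H^s}$. A Lady Windermere's fan telescoping, supplemented by an a priori bootstrap keeping $\|u^m\|_{H^s}$ inside a fixed ball on $[0,T]$, yields the global estimate $\|u^m - u(t_m)\|_{H^s} \leq C h^\gamma$.

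The main obstacle is the sharp local error at the fractional order $\gamma$: one must carefully track where the $\gamma$ missing derivatives are spent when approximating the oscillatory cubic Duhamel integral by the product of $\varphi_1(\pm ih\partial_x^2)$ and frozen pointwise evaluations, and verify that the symmetric averaging does not spoil this gain. In particular the forward and backward residuals $A,B_0$ must both be shown to be $O(h^{1+\gamma})$ with the \emph{same} constants depending only on $\sup_{t}\|u\|_{H^{s+\gamma}}$, which requires a careful bookkeeping of the bilinear interactions at each endpoint --- the heart of the analysis.
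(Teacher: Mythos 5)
Your overall architecture (well-posedness of the implicit step by Banach fixed point, an $e^{Ch}$-Lipschitz stability bound in $H^s$, a small fixed-point argument in the error $e$ to transfer an explicit local residual to the implicit symmetric scheme, and a Lady Windermere's fan) coincides with the paper's, and those parts are fine. The genuine gap is in the local error. Your starting premise --- that \eqref{eqn:symmetric_resonance_method} is the average of the forward Ostermann--Schratz step and its time-reversal --- is not correct as stated: the Ostermann--Schratz integrator carries the filter $\varphi_1(-2ih\partial_x^2)$ (the resonant phase of the conjugated factor is $2k_2^2$, not $k_2^2$), whereas the scheme uses $\varphi_1(\mp ih\partial_x^2)$. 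Consequently your one-sided residual $A=\Phi_h^{\rightarrow}(u(t_m))-u(t_{m+1})$, with $\Phi_h^{\rightarrow}$ defined via $\varphi_1(-ih\partial_x^2)$, is \emph{not} $O(h^{1+\gamma})$ under $H^{s+\gamma}$: the discrepancy is the Fourier multiplier
\begin{align*}
\frac{h}{2}\bigl|\varphi_1(-ihk^2)-\varphi_1(-2ihk^2)\bigr|=\frac{|e^{-ihk^2}-1|^2}{4k^2}\leq C\,h^{1+\gamma}|k|^{2\gamma},
\end{align*}
which costs $2\gamma$ derivatives on the conjugated factor (equivalently, under $H^{s+\gamma}$ it is only $O(h^{1+\gamma/2})$). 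The same happens if, as you suggest, you isolate only $-k_2^2$ from the resonance phase: the leftover $k_2^2$ sits on a single frequency and cannot be distributed. Bounding $\|A\|_{H^s}+\|B_0\|_{H^s}$ separately therefore yields at best global order $\gamma/2$, not $\gamma$.

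What rescues the claimed rate is a cancellation \emph{between} the forward and backward halves, and this is precisely what the paper's proof of Lemma~\ref{lem:local_error_NLS_integrator} is built around: it compares the scheme not with two full steps but with the \emph{composition of two Ostermann--Schratz half-steps} of size $h/2$, for which $\varphi_1(-2i\tfrac h2\partial_x^2)=\varphi_1(-ih\partial_x^2)$ matches the scheme's filter exactly, and then controls the remaining mismatch through the identity
\begin{align*}
e^{-i\frac h2 k^2}\varphi_1(ihl^2)-e^{-i\frac h2(-l^2+m^2+n^2)}\varphi_1(-ihl^2)=e^{-i\frac h2 k^2}\varphi_1(ihl^2)\bigl(1-e^{ih(mn-lm-ln)}\bigr),
\end{align*}
whose exponent contains only cross terms $mn$, $lm$, $ln$, so that $|h(mn-lm-ln)|^{\gamma}$ distributes at most $\gamma$ derivatives onto each of three distinct factors and the $H^{s+\gamma}$ hypothesis suffices. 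To repair your proof you must either reproduce this forward--backward cancellation explicitly (i.e.\ estimate $\tfrac12 A+\tfrac12 B_0$ as a single object on the Fourier side) or adopt the paper's half-step comparison; the separate bookkeeping of $A$ and $B_0$ that you flag as ``the heart of the analysis'' cannot succeed at the stated regularity.
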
}
For the proof of this statement it is useful to define the auxiliary function
\begin{align*}
    \mathcal{S}_1(u):=e^{ih {\partial_{x}^2}}u^m+i\frac{h}{4}e^{ih{\partial_{x}^2}}\left[\left(u^{m}\right)^2\varphi_1(-ih{\partial_{x}^2})\overline{u^m}\right]+i\frac{h}{4}\left[u^2\varphi_1(ih{\partial_{x}^2})\overline{u}\right].
\end{align*}
Then we can prove the following crucial result:
 
\begin{theorem}\label{thm:implicit_NLS_integrator_fixed_pt_iterations} Let $R>0$ and $s>1/2$. Then there is a $h_R>0$ such that for all $h\in [0,h_R)$ and any $u^{\corr{m}}\in B_R(H^s):=\{\tilde{v}\in H^s\,\vert\, \|\tilde{v}\|_{H^s}<R\}$ we have $u^{\corr{m+1}}$ the exact solution of \eqref{eqn:symmetric_resonance_method} is given by the following limit in $H^s$:
		\begin{align}\label{eqn:limit_expression_u^n+1}
			u^{m+1}=\lim_{j\rightarrow\infty}\mathcal{S}_1^{(j)}(e^{i{\partial_{x}^2} h}u^m),\quad\text{where\ \ }\mathcal{S}_1^{(j)}(u^m)=\underbrace{\mathcal{S}_1\circ\cdots\circ\mathcal{S}_1}_{j-\text{times}}(e^{ih{\partial_{x}^2}}u^m).
		\end{align}
		{Moreover, we have the estimate
			\begin{align}\label{eqn:distance_next_time_step_from_previous_Hs}
				\left\|u^{m+1}-e^{ih{\partial_{x}^2}}u^m\right\|_{H^s}\leq h \tilde{C}_R,
			\end{align}
			for some $\tilde{C}_R>0$ which depends only on $R$ (and $s$).}
\end{theorem}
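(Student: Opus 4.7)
The plan is to apply the Banach fixed point theorem to the map $\mathcal{S}_1$ on a suitable closed ball of $H^s$ centred at $e^{ih\partial_x^2}u^m$. The existence of $u^{m+1}$ as a fixed point, the convergence of the iteration \eqref{eqn:limit_expression_u^n+1}, and the quantitative estimate \eqref{eqn:distance_next_time_step_from_previous_Hs} will all fall out of a single contraction argument. The two structural facts I will lean on are that $H^s(\mathbb{T})$ is a Banach algebra for $s>1/2$, and that the Fourier multipliers $e^{\pm ih\partial_x^2}$ and $\varphi_1(\pm ih\partial_x^2)$ are bounded on $H^s$ uniformly in $h$ (the former isometrically; the latter because $|\varphi_1(iy)|=|2\sin(y/2)/y|\le 1$ for real $y$).

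First I would fix the candidate ball. Given $u^m\in B_R(H^s)$ and $h>0$, set
\begin{equation*}
B_{\delta}:=\bigl\{v\in H^s\,:\,\|v-e^{ih\partial_x^2}u^m\|_{H^s}\le\delta\bigr\},
\end{equation*}
so that every $v\in B_\delta$ satisfies $\|v\|_{H^s}\le R+\delta$. For the self-map property, I would expand
\begin{equation*}
\mathcal{S}_1(u)-e^{ih\partial_x^2}u^m=\tfrac{ih}{4}e^{ih\partial_x^2}\!\left[(u^m)^2\varphi_1(-ih\partial_x^2)\overline{u^m}\right]+\tfrac{ih}{4}\!\left[u^2\varphi_1(ih\partial_x^2)\bar u\right]
\end{equation*}
and, using the algebra property together with the uniform bound on $\varphi_1$, estimate this by $C_s h\bigl(R^3+(R+\delta)^3\bigr)$. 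Choosing $\delta=h\tilde{C}_R$ with $\tilde{C}_R$ large enough (depending only on $R$ and $s$), this is at most $\delta$ for all sufficiently small $h$, so $\mathcal{S}_1(B_\delta)\subset B_\delta$.

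Next I would show $\mathcal{S}_1$ is a contraction on $B_\delta$. Only the $u^{m+1}$-term is $u$-dependent, so
\begin{equation*}
\mathcal{S}_1(u)-\mathcal{S}_1(w)=\tfrac{ih}{4}\bigl[u^2\varphi_1(ih\partial_x^2)\bar u-w^2\varphi_1(ih\partial_x^2)\bar w\bigr].
\end{equation*}
Writing $u^2A\bar u-w^2A\bar w=(u-w)(u+w)A\bar u+w^2A(\bar u-\bar w)$ with $A=\varphi_1(ih\partial_x^2)$, and applying the algebra inequality together with $\|A\|_{H^s\to H^s}\le 1$, gives
\begin{equation*}
\|\mathcal{S}_1(u)-\mathcal{S}_1(w)\|_{H^s}\le C_sh(R+\delta)^2\,\|u-w\|_{H^s}.
\end{equation*}
For $h<h_R$ small enough (depending only on $R,s$), the Lipschitz constant is $<1/2$, so $\mathcal{S}_1$ is a strict contraction on the complete metric space $B_\delta$. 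The Banach fixed point theorem then yields a unique fixed point $u^{m+1}\in B_\delta$, which by construction solves \eqref{eqn:symmetric_resonance_method}; Picard iteration from any starting point in $B_\delta$ (in particular from $e^{ih\partial_x^2}u^m$) converges to it in $H^s$, giving \eqref{eqn:limit_expression_u^n+1}. The membership $u^{m+1}\in B_\delta$ is exactly the estimate \eqref{eqn:distance_next_time_step_from_previous_Hs} with constant $\tilde{C}_R$.

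The main obstacle is really only bookkeeping: tracking the $R$-dependence of the constants throughout (so that the ball radius $\delta$, the contraction factor, and $h_R$ depend only on $R$ and $s$ and not on the spatial discretisation parameter $N$), and isolating the mild point that $\varphi_1(\pm ih\partial_x^2)$ acts as a contraction on $H^s$ uniformly in $h$ via the elementary bound $|\varphi_1(iy)|\le 1$. Everything else is the standard Banach-algebra nonlinear-estimate machinery typical of fixed-point arguments for semilinear equations at supercritical regularity $s>1/2$.
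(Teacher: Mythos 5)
Your proof is correct and rests on the same core mechanism as the paper's: a contraction argument for $\mathcal{S}_1$ in $H^s$ using the Banach-algebra property for $s>1/2$ and the uniform bound $\|\varphi_1(\pm ih\partial_x^2)\|_{H^s\to H^s}\le 1$. The one genuine difference is the choice of invariant set. The paper contracts $\mathcal{S}_1$ on the large ball $B_R(H^s)$ and then obtains the estimate \eqref{eqn:distance_next_time_step_from_previous_Hs} a posteriori by telescoping the geometric series $\sum_j 2^{-j}\|\mathcal{S}_1(e^{ih\partial_x^2}u^m)-e^{ih\partial_x^2}u^m\|_{H^s}$; it never explicitly verifies that $\mathcal{S}_1$ maps that ball into itself (the telescoping bound implicitly shows the iterates stay within $O(h)$ of the starting point, but this is left unstated). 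You instead contract on the small closed ball of radius $\delta=h\tilde C_R$ centred at $e^{ih\partial_x^2}u^m$, verify the self-map property directly, and then \eqref{eqn:distance_next_time_step_from_previous_Hs} is simply the statement that the fixed point lies in that ball. Your version is slightly cleaner on the invariance point and makes the $h$-dependence of the displacement structurally transparent, at the cost of one extra computation (the self-map estimate); the paper's version gets uniqueness on the larger set $B_R(H^s)$ for free. Both yield the theorem as stated, with constants depending only on $R$ and $s$ and independent of the spatial discretisation, which is the property the paper actually needs downstream.
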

\begin{proof} For the proof of \eqref{eqn:limit_expression_u^n+1} we notice that, for $h$ sufficiently small, $\mathcal{S}_1$ is a contraction mapping on $B_{R}(H^s)$: Indeed, for any $w,v\in B_{R}(H^s)$, we have
		\begin{align*}
			\left\|\mathcal{S}_1(w)-\mathcal{S}_1(v)\right\|_{H^s}=\frac{h}{4}\left\|w^2\varphi(ih{\partial_{x}^2})\bar{w}-v^2\varphi(ih{\partial_{x}^2})\bar{v}\right\|_{H^s}.
		\end{align*}
		Similarly to the proof of Lemma~3.1 in \cite{ostermann2018low} we now observe that there is a constant $c>0$ independent of $w,v$ such that
		\begin{align}\label{eqn:estimate_of_nonlinearity_NLS_convergence}
			\left\|w^2\varphi(ih{\partial_{x}^2})\bar{w}-v^2\varphi(ih{\partial_{x}^2})\bar{v}\right\|_{H^s}\leq c\left\|w-v\right\|_{H^l} \sum_{k=0}^{2}\|w\|_{H^s}^{2-k}\|v\|_{H^s}^k.
		\end{align}
  Thus if we take $h<6R^2c$ we find immediately that $\left\|\mathcal{S}_1(w)-\mathcal{S}_1(v)\right\|_{H^s}\leq\frac{1}{2}\left\|w-v\right\|_{H^s}$ and \eqref{eqn:limit_expression_u^n+1} follows from the Banach fixed-point theorem. Let us now write
\begin{align*}
    \|\mathcal{S}_1^{(J)}(e^{ih{\partial_{x}^2}}u^m)-e^{ih{\partial_{x}^2}}u^m\|_{H^s}&\leq \sum_{j=0}^{J-1}\|\mathcal{S}_1^{(j+1)}(e^{ih{\partial_{x}^2}}u^m)-\mathcal{S}_1^{(j)}(e^{ih{\partial_{x}^2}}u^m)\|_{H^s}\\
    &\leq \sum_{j=0}^{J-1}2^{-j}\|\mathcal{S}_1(e^{ih{\partial_{x}^2}}u^m)-e^{ih{\partial_{x}^2}}u^m\|_{H^s}\leq 2\|\mathcal{S}_1(e^{ih{\partial_{x}^2}}u^m)-e^{ih{\partial_{x}^2}}u^m\|_{H^s}
\end{align*}
Taking the limit as $J\rightarrow\infty$ it thus follows that
\begin{align*}
    \left\|u^{m+1}-e^{ih{\partial_{x}^2}}u^m\right\|_{H^s}\leq h\left\|\left(u^{m}\right)^2\varphi_1(-ih{\partial_{x}^2})\overline{u^m}\right\|_{H^s}\leq hc\|u^m\|_{H^s}^3
\end{align*}
  and so \eqref{eqn:distance_next_time_step_from_previous_Hs} follows.
\end{proof}
We can now prove the following additional properties of the symmetric low-regularity integrator from \eqref{eqn:symmetric_resonance_method} which we denote here by $\Phi_{h}^{NLS}$ (such that $u^{m+1}=\Phi_{h}^{NLS}(u^m)$):
\begin{lemma}[Stability]\label{lem:stability_NLS_integrator} Let $R>0, s>1/2$. Then there is a $h_R>0$ such that for all $0\leq h<h_R$ and any $v,w\in B_R(H^s)$ we have
\begin{align*}
\left\|\Phi_{h}^{NLS}(v)-\Phi_{h}^{NLS}(w)\right\|_{H^s}\leq \exp\left(C_Rh\right)\|v-w\|_{H^s}    
\end{align*}
where $C_R>0$ depends only on $R$ (and $s$).
\end{lemma}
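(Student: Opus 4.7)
The plan is to compare the implicit definitions of $u^{m+1}=\Phi_h^{NLS}(v)$ and $\tilde u^{m+1}=\Phi_h^{NLS}(w)$, split the difference into a linear (exactly isometric) part plus two nonlinear contributions controlled by the bilinear estimate \eqref{eqn:estimate_of_nonlinearity_NLS_convergence}, and then absorb the one contribution that involves the unknowns $u^{m+1},\tilde u^{m+1}$ themselves into the left-hand side.

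First I would fix $h_R>0$ small enough that Theorem~\ref{thm:implicit_NLS_integrator_fixed_pt_iterations} applies to both $v$ and $w$, i.e.\ so that the fixed-point equation defining $\Phi_h^{NLS}$ has a unique solution in a ball and, by \eqref{eqn:distance_next_time_step_from_previous_Hs}, satisfies $\|u^{m+1}\|_{H^s}\leq R+h_R\tilde C_R=:R'$ and similarly for $\tilde u^{m+1}$. This gives us a uniform $H^s$-bound on both iterates that we shall feed into \eqref{eqn:estimate_of_nonlinearity_NLS_convergence}.

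Next I would subtract the two defining relations to obtain
\begin{align*}
u^{m+1}-\tilde u^{m+1}
&=e^{ih\partial_x^2}(v-w)
+i\tfrac{h}{4}e^{ih\partial_x^2}\!\left[v^2\varphi_1(-ih\partial_x^2)\bar v-w^2\varphi_1(-ih\partial_x^2)\bar w\right]\\
&\quad+i\tfrac{h}{4}\!\left[(u^{m+1})^2\varphi_1(ih\partial_x^2)\overline{u^{m+1}}-(\tilde u^{m+1})^2\varphi_1(ih\partial_x^2)\overline{\tilde u^{m+1}}\right].
\end{align*}
Taking $H^s$ norms, the first term is an isometry, and the other two are bounded using \eqref{eqn:estimate_of_nonlinearity_NLS_convergence} (which is uniform in $h$ since $\varphi_1(\pm ih\partial_x^2)$ is bounded on $H^s$ uniformly in $h$, as follows from the symbol bound $|\varphi_1(i\xi)|\leq 1$). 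Using the uniform bound $R'$ for $v,w,u^{m+1},\tilde u^{m+1}$ in $H^s$, this yields
\begin{align*}
\left\|u^{m+1}-\tilde u^{m+1}\right\|_{H^s}\leq \|v-w\|_{H^s}+hC_{R'}\|v-w\|_{H^s}+hC_{R'}\|u^{m+1}-\tilde u^{m+1}\|_{H^s}
\end{align*}
for some constant $C_{R'}>0$ depending only on $s$ and $R'$ (hence on $R$).

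Finally, shrinking $h_R$ further so that $hC_{R'}\leq 1/2$ for all $h<h_R$ lets me absorb the last term on the right into the left-hand side and conclude
\begin{align*}
\|u^{m+1}-\tilde u^{m+1}\|_{H^s}\leq \frac{1+hC_{R'}}{1-hC_{R'}}\|v-w\|_{H^s}\leq (1+hC_R)\|v-w\|_{H^s}\leq e^{hC_R}\|v-w\|_{H^s},
\end{align*}
after redefining $C_R$ appropriately. The main obstacle is the \emph{implicit} character of $\Phi_h^{NLS}$: one must first secure an a priori bound on the iterates (via Theorem~\ref{thm:implicit_NLS_integrator_fixed_pt_iterations}) to have a fixed ball on which \eqref{eqn:estimate_of_nonlinearity_NLS_convergence} gives a uniform constant, and then one must ensure $h_R$ is small enough both for the contraction step and for the absorption step; everything else is a straightforward application of the product/bilinear estimate already used in the paper.
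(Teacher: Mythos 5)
Your proposal is correct and follows essentially the same route as the paper's own proof: subtract the two defining relations, use that $e^{ih\partial_x^2}$ is an isometry on $H^s$, control both nonlinear differences via the trilinear estimate \eqref{eqn:estimate_of_nonlinearity_NLS_convergence} together with the a priori bound on the iterates from Theorem~\ref{thm:implicit_NLS_integrator_fixed_pt_iterations}, and absorb the implicit term to reach the factor $\frac{1+hC}{1-hC}\leq e^{C'h}$. Your explicit handling of the constant redefinition in the last step is in fact slightly more careful than the paper's.
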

\begin{proof} We have
    \begin{align*}
       \left\|\Phi_{h}^{NLS}(v)-\Phi_{h}^{NLS}(w)\right\|_{H^s}&\leq \left\|e^{ih{\partial_{x}^2}} (v-w)\right\|_{H^s}+\frac{h}{4}\left\|w^2\varphi(ih{\partial_{x}^2})\bar{w}-v^2\varphi(ih{\partial_{x}^2})\bar{v}\right\|_{H^s}\\
       &\quad\quad+\frac{h}{4}\left\|\left(\Phi_{h}^{NLS}(v)\right)^2\varphi(ih{\partial_{x}^2})\overline{\Phi_{h}^{NLS}(v)}-\left(\Phi_{h}^{NLS}(w)\right)^2\varphi(ih{\partial_{x}^2})\overline{\Phi_{h}^{NLS}(w)}\right\|_{H^s}\\
       &\leq \left(1+c\frac{h}{4}\sum_{j=0}^2\|u\|_{H^s}^j\|v\|_{H^s}^{2-j}\right)\|u-v\|_{H^s}\\
       &\quad\quad+c\frac{h}{4}\|\Phi_{h}^{NLS}(v)-\Phi_{h}^{NLS}(u)\|_{H^s}\sum_{j=0}^2\|\Phi_{h}^{NLS}(u)\|_{H^s}^j\|\Phi_{h}^{NLS}(v)\|_{H^s}^{2-j}\\
       &\leq \left(1+c\frac{h}{4}R^3\right)\|u-v\|_{H^s}+\frac{h}{4}(1+h_R\tilde{C}_R)^3\|\Phi_{h}^{NLS}(v)-\Phi_{h}^{NLS}(u)\|_{H^s}
    \end{align*}
    where in the final line we used Thm.~\ref{thm:implicit_NLS_integrator_fixed_pt_iterations}. Thus we have
    \begin{align*}
        \left\|\Phi_{h}^{NLS}(v)-\Phi_{h}^{NLS}(w)\right\|_{H^s}\leq \frac{1+h c_2}{1-h c_2}\left\|v-u\right\|_{H^s}\leq \exp(c_2h )\|v-u\|_{H^s}
    \end{align*}
    for some constant $c_2$ depending only on $R,s$ and the result follows.
\end{proof}

\begin{lemma}[Local error]\label{lem:local_error_NLS_integrator}Let us denote by $\varphi_{t}(u_0) = u(t)$ the exact solution to \eqref{eqn:NLS_in_hasimoto_transform} with initial condition $u_0$. Fix $R > 0,s>1/2,\gamma\in(0,1]$, then there is a $h_R> 0$ such that for all $h\in[0,h_R)$ and any $u_0\in B_R(H^s)$ with $\sup_{t\in[0,h ]}\|\varphi_{t}(u_0)\|_{H^{s+\gamma}}<R$ we have
\begin{align*}
    \left\|\varphi_{h}(u_0)-\Phi^{(NLS)}_h(u_0)\right\|_{H^s}\leq  c_Rh^{1+\gamma}
\end{align*}
for some constant $c_R > 0$ depending only on $R,s,\gamma > 0$.
\end{lemma}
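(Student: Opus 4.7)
The plan is to adapt the local-error analysis for resonance-based low-regularity integrators developed in \cite{ostermann2018low} to the symmetric implicit scheme \eqref{eqn:symmetric_resonance_method}. I would first reduce the problem to a consistency-residual estimate. Writing the scheme as $u^{m+1}=\bar\Phi_h(u_0,u^{m+1})$, with
\begin{align*}
\bar\Phi_h(u_0,w):=e^{ih\partial_x^2}u_0+\tfrac{ih}{4}e^{ih\partial_x^2}\bigl[u_0^2\varphi_1(-ih\partial_x^2)\overline{u_0}\bigr]+\tfrac{ih}{4}\bigl[w^2\varphi_1(ih\partial_x^2)\overline{w}\bigr],
\end{align*}
the same contraction argument as in Theorem~\ref{thm:implicit_NLS_integrator_fixed_pt_iterations} and Lemma~\ref{lem:stability_NLS_integrator} shows that, for $h$ sufficiently small, $w\mapsto\bar\Phi_h(u_0,w)$ is Lipschitz on $H^s$ with constant $Ch$ on balls of radius $O(R)$. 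Defining the consistency residual $\mathcal{R}(h):=\varphi_h(u_0)-\bar\Phi_h(u_0,\varphi_h(u_0))$, a standard perturbed fixed-point estimate then gives
\begin{align*}
\|\varphi_h(u_0)-\Phi_h^{(NLS)}(u_0)\|_{H^s}\leq 2\|\mathcal{R}(h)\|_{H^s},
\end{align*}
so the lemma reduces to the single bound $\|\mathcal{R}(h)\|_{H^s}\leq c_R h^{1+\gamma}$.

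To estimate $\mathcal{R}(h)$, I would pass to the twisted variable $v(t)=e^{-it\partial_x^2}u(t)$, so that $\varphi_h(u_0)-e^{ih\partial_x^2}u_0$ is represented in Fourier by the trilinear oscillatory sum
\begin{align*}
\tfrac{i}{2}\sum_{k\in\mathbb{Z}}e^{ikx}e^{-ihk^2}\sum_{k_1-k_2+k_3=k}\int_0^{h}e^{is\phi}\hat v(s)_{k_1}\overline{\hat v(s)_{k_2}}\hat v(s)_{k_3}\,ds,
\end{align*}
with resonance phase $\phi=2(k_2-k_1)(k_2-k_3)$. A careful Fourier-side comparison shows that the two $\varphi_1$-factors in \eqref{eqn:symmetric_resonance_method} encode the quadratures of this integral on $[0,h/2]$ and $[h/2,h]$ obtained by (i) freezing $\hat v(s)$ at the left/right endpoint and (ii) keeping only the dominant part $\phi_{\mathrm{res}}=2k_2^2$ of the phase; the identity $\int_0^{h/2}e^{2isk_2^2}\,ds=(h/2)\varphi_1(ihk_2^2)$ matches this to the $u_0$-term of the scheme, and the analogous computation on $[h/2,h]$ matches it to the $\varphi_h(u_0)$-term. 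Consequently $\mathcal{R}(h)$ decomposes as a \emph{data-freezing defect} plus a \emph{non-resonant phase defect}.

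The freezing defect is easy: combining \eqref{eqn:twisted_variable_equation} with the $H^s$-algebra property ($s>1/2$) and the fact that $e^{\pm it\partial_x^2}$ is an isometry on every $H^\sigma$ yields $\|v(s)-v(s_0)\|_{H^s}\lesssim|s-s_0|R^3$ for $s_0\in\{0,h\}$. Integration against the $O(h)$ quadrature weights yields an $H^s$-contribution of order $h^2$, which is compatible with $h^{1+\gamma}$ since $\gamma\leq 1$.

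The main obstacle is the non-resonant phase defect. Writing $e^{is\phi}=e^{is\phi_{\mathrm{res}}}e^{is\psi}$ with $\psi=-2k_2(k_1+k_3)+2k_1k_3$ and Taylor-expanding $e^{is\psi}-1=is\psi+O(s^2\psi^2)$, the leading correction after $s$-integration has size $h^2|\psi|$, i.e. a polynomial weight of total degree two in $(k_1,k_2,k_3)$. A direct estimate would force two derivatives on the data, hence $u_0\in H^{s+2}$. The low-regularity recovery, in the spirit of \cite[Section~3]{ostermann2018low}, is to redistribute these derivatives across the trilinear product $\hat u_0(k_1)\overline{\hat u_0(k_2)}\hat u_0(k_3)$ by means of the Kato--Ponce/fractional Leibniz inequality together with the embedding $H^s\hookrightarrow L^\infty$ ($s>1/2$): one full derivative can always be absorbed by two of the three factors in $L^\infty$, leaving only a fractional $\gamma\in(0,1]$ derivative to be placed on a single factor. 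A careful bookkeeping then produces an $H^s$-bound of the form $C_R h^{1+\gamma}\|u_0\|_{H^{s+\gamma}}\|u_0\|_{H^s}^2$, with the $[h/2,h]$ contribution handled analogously using the hypothesis $\sup_{t\in[0,h]}\|\varphi_t(u_0)\|_{H^{s+\gamma}}<R$ applied to $\varphi_h(u_0)$. Summing the two defect estimates and invoking the fixed-point reduction proves the lemma.
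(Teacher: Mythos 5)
Your argument is correct in outline, but it takes a genuinely different route from the paper. The paper does not redo the consistency analysis from first principles: it invokes the already-established local error bound of \cite[Lemma~3.2]{ostermann2018low} for \emph{two half-steps} of size $h/2$, which produces an explicit two-stage approximation of $\varphi_h(u_0)$ with defect $O(h^{1+\gamma})$; it then removes the implicitness by substituting $e^{ih\partial_x^2}u_0$ for $u^{m+1}$ inside the nonlinearity, an $O(h^2)$ perturbation thanks to the a priori bound \eqref{eqn:distance_next_time_step_from_previous_Hs} and the prefactor $h/4$; and finally it compares the two resulting explicit trilinear expressions mode by mode, where the entire discrepancy collapses to a single phase factor $1-e^{ih(mn-lm-ln)}$ estimated by $2|h(mn-lm-ln)|^{\gamma}$. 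Your proposal instead reduces the problem to a consistency residual via a perturbed fixed-point estimate (which is valid and plays exactly the role of the paper's substitution step) and then re-derives the resonance analysis from scratch: identifying $\phi=2(k_2-k_1)(k_2-k_3)$, splitting off $\phi_{\mathrm{res}}=2k_2^2$, and bounding the data-freezing and non-resonant defects separately. This amounts to reproving the Ostermann--Schratz consistency lemma adapted to the symmetric endpoints; it is longer, but self-contained and makes the resonance structure explicit, whereas the paper's half-step trick buys brevity at the price of black-boxing the hard estimate. Two points of caution in your version: drop the Taylor expansion $e^{is\psi}-1=is\psi+O(s^2\psi^2)$ entirely and use the fractional bound $|e^{is\psi}-1|\leq 2|s\psi|^{\gamma}$ directly, since the quadratic remainder costs more derivatives than the hypothesis supplies; and note that the implicit term matches the frozen resonant quadrature on $[h/2,h]$ only up to an additional factor $e^{-ih\psi}$ arising when the twisted coefficients $\hat v_{k_j}(h)$ are rewritten in terms of $\hat u_{k_j}(h)$ --- this factor is of the same non-resonant type and is absorbed by the same bound, but it must be tracked explicitly (it is precisely the paper's ``central identity'').
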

\begin{proof} We observe that under the assumptions of this lemma the following estimate was proved in \cite[Lemma~3.2]{ostermann2018low}
\begin{align}\label{eqn:original_estimate_ostermann_schratz}
      \left\|\varphi_h(u_0)-e^{ih {\partial_{x}^2}}u_0-i\frac{h}{2}e^{ih{\partial_{x}^2}}\left[u_0^2\varphi_1(-2ih{\partial_{x}^2})\overline{u_0}\right]\right\|_{H^s}\leq c_{1,R}h^{1+\gamma},
\end{align}
for some constant $c_{1,R} > 0$ depending only on $R,s,\gamma > 0$. \corr{Taking two time steps of size $h/2$ in \eqref{eqn:original_estimate_ostermann_schratz} and iterating this error estimate (as well as using that $H^s$ is a Banach algebra) we find
\begin{align}\label{eqn:local_error_estimate_ostermann_schratz}
          \left\|\varphi_h(u_0)-e^{ih {\partial_{x}^2}}u_0-i\frac{h}{4}e^{ih{\partial_{x}^2}}\left[u_0^2\varphi_1(-ih{\partial_{x}^2})\overline{u_0}\right]-i\frac{h}{4}e^{i\frac{h}{2}{\partial_{x}^2}}\left[\left(e^{i\frac{h}{2}{\partial_{x}^2}}u_0\right)^2\varphi_1(-ih{\partial_{x}^2})\overline{e^{i\frac{h}{2}{\partial_{x}^2}}u_0}\right]\right\|_{H^s}\leq c_{2,R}h^{1+\gamma},
\end{align}
for some constant $c_{2,R} > 0$ depending only on $R,s,\gamma > 0$.} \corr{Moreover, we note by estimate \eqref{eqn:distance_next_time_step_from_previous_Hs} from Thm.~\ref{thm:implicit_NLS_integrator_fixed_pt_iterations} and since $H^s$ is a Banach algebra that, for $h$ sufficiently small, there is a constant $c_{3,R}$ depending on $R,s,\gamma > 0$ such that
\begin{align}\label{eqn:explicit_expression_symmetric_method}
    \left\|\Phi_{h}^{NLS}(u_0)-e^{ih {\partial_{x}^2}}u_0-i\frac{h}{4}e^{ih{\partial_{x}^2}}\left[u_0^2\varphi_1(-ih{\partial_{x}^2})\overline{u_0}\right]-i\frac{h}{4}\left[\left(e^{ih{\partial_{x}^2}}u_0\right)^2\varphi_1(ih{\partial_{x}^2})\overline{e^{ih{\partial_{x}^2}}u_0}\right]\right\|_{H^s}\leq c_{3,R} h^{2}.
\end{align}}
\corr{Finally, we have, writing $\hat{u}_k(0)$ for the Fourier coefficients of $u_0$
\begin{align}\begin{split}\label{eqn:explicit_fourier_mode_estimate}
    &\left\|i\frac{h}{4}e^{i\frac{h}{2}{\partial_{x}^2}}\left[\left(e^{i\frac{h}{2}{\partial_{x}^2}}u_0\right)^2\varphi_1(-ih{\partial_{x}^2})\overline{e^{i\frac{h}{2}{\partial_{x}^2}}u_0}\right]-i\frac{h}{4}\left[\left(e^{ih{\partial_{x}^2}}u_0\right)^2\varphi_1(ih{\partial_{x}^2})\overline{e^{ih{\partial_{x}^2}}u_0}\right]\right\|_{H^s}^2\\
    &\quad=\frac{h}{4}\sum_{k\in\mathbb{Z}}(1+|k|^{2s})\left|\sum_{k+l=m+n}e^{-i\frac{h}{2}(k^2-l^2+m^2+n^2)}\varphi_1(ihl^2)-e^{-ih(m^2+n^2-l^2)}\varphi_1(-ihl^2)\right|^2\left|\hat{u}_m(0)\hat{u}_n(0)\overline{\hat{u}_l(0)}\right|^2.
\end{split}\end{align}
Then, we have the central identity (valid for $k=-l+m+n$)
\begin{align*}
   e^{-i\frac{h}{2}k^2}\varphi_1(ihl^2)-e^{-i\frac{h}{2}(-l^2+m^2+n^2)}\varphi_1(-ihl^2)=e^{-i\frac{h}{2}k^2}\varphi_1(ihl^2)\left(1-e^{ih(mn-lm-ln)}\right),
\end{align*}
which implies, for any $\gamma\in (0,1]$ and any $m,n,k,l\in\mathbb{Z}$ with $k=-l+m+n$,
\begin{align*}
\left|e^{-i\frac{h}{2}k^2}\varphi_1(ihl^2)-e^{-i\frac{h}{2}(-l^2+m^2+n^2)}\varphi_1(-ihl^2)\right|&\leq \frac{\left|1-e^{ihmn-ihlm-ihln}\right|}{|h(mn-lm-ln)|^\gamma}|h(mn-lm-ln)|^\gamma\\
&\leq 2|h(mn-lm-ln)|^\gamma.
\end{align*}
Substituting this estimate into \eqref{eqn:explicit_fourier_mode_estimate} we deduce that (cf. proof of Lemma~3.2 in \cite{ostermann2018low})
\begin{align}\label{eqn:estimate_difference_implicit_explicit}
    \left\|i\frac{h}{4}e^{i\frac{h}{2}{\partial_{x}^2}}\left[\left(e^{i\frac{h}{2}{\partial_{x}^2}}u_0\right)^2\varphi_1(-ih{\partial_{x}^2})\overline{e^{i\frac{h}{2}{\partial_{x}^2}}u_0}\right]-i\frac{h}{4}\left[\left(e^{ih{\partial_{x}^2}}u_0\right)^2\varphi_1(ih{\partial_{x}^2})\overline{e^{ih{\partial_{x}^2}}u_0}\right]\right\|_{H^s}\leq c_{4,R}h^{1+r},
\end{align}
for a constant $c_{4,R}$ depending on $R,s,\gamma > 0$. We conclude by combining the estimates  \eqref{eqn:local_error_estimate_ostermann_schratz}, \eqref{eqn:explicit_expression_symmetric_method} \& \eqref{eqn:estimate_difference_implicit_explicit}.}
\end{proof}
\begin{proof}[Proof of Thm.~\ref{thm_app:low_regularity_convergence_thm_NLS}]
    The proof of this global convergence result is now a direct consequence of Lemmas \ref{lem:stability_NLS_integrator} \& \ref{lem:local_error_NLS_integrator} using a standard Lady Windermere’s fan argument (cf. proof of Thm 3.3 in \cite{ostermann2018low}).
\end{proof}
\section{Proof of Proposition~\ref{prop:quadrature_error_Q1}}\label{app:proof_quadrature_error_Q1}
We recall the statement of the proposition:
\begin{proposition} If $u$ is the solution to \eqref{eqn:NLS_in_hasimoto_transform} then
	\begin{align}\label{eqn:app_error_Q1}
	\left\|I_1[u,t_m,t_{m+1}]- \mathcal{Q}_1[u,t_m,t_{m+1}]\right\|_{L^2}\leq h^2 C,
	\end{align}
where $C$ depends on $\max_{t\in[t_m,t_{m+1}]}\|u(t)\|_{H^1}$. Moreover, for any $u,w:[t_m,t_{m+1}]\times \mathbb{T}\rightarrow\mathbb{C}$ we have
 \begin{align}\label{eqn:app_stability_Q1}
 	\left\|\mathcal{Q}_1[u,t_m,t_{m+1}]-\mathcal{Q}_1[w,t_m,t_{m+1}]\right\|_{L^2}\leq Dh\left(\left\|u(t_{m+1})-w(t_{m+1})\right\|_{H^1}+\left\|u(t_{m})-w(t_{m})\right\|_{H^1}\right),
 \end{align}
where $D>0$ is a constant independent of $u,w$.
\end{proposition}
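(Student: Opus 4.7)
The plan is to prove both parts by working in the twisted variable $v(t)=e^{-it\partial_x^2}u(t)$, where the quadrature $\mathcal{Q}_1$ reduces to a midpoint approximation of the linear integral $\int_{t_m}^{t_{m+1}}e^{is\partial_x^2}\partial_x v(s)\,ds$. In these variables the two estimates become, respectively, a classical midpoint-rule local-error estimate and an elementary boundedness statement for Fourier multipliers.

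For the local error \eqref{eqn:app_error_Q1}, I would first write
\begin{align*}
I_1[u,t_m,t_{m+1}]-\mathcal{Q}_1[u,t_m,t_{m+1}]=\int_{t_m}^{t_{m+1}}e^{is\partial_x^2}\partial_x\Bigl(v(s)-\tfrac{v(t_{m+1})+v(t_m)}{2}\Bigr)\,ds.
\end{align*}
Using $v(s)-\tfrac12(v(t_m)+v(t_{m+1}))=\tfrac12\bigl(\int_{t_m}^s v_t(\sigma)\,d\sigma-\int_{s}^{t_{m+1}}v_t(\sigma)\,d\sigma\bigr)$, the isometry property of $e^{is\partial_x^2}$ on $L^2$, and the fact that $\partial_x$ commutes with $e^{-it\partial_x^2}$, the key is to estimate $\|\partial_x v_t\|_{L^2}$. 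Differentiating \eqref{eqn:twisted_variable_equation} and using $e^{it\partial_x^2}v=u$ gives $\partial_x v_t=-\tfrac{i}{2}e^{-it\partial_x^2}\partial_x(|u|^2u)$, so the isometry of $e^{-it\partial_x^2}$ on $L^2$ together with the Banach algebra property of $H^1(\mathbb{T})$ yields $\|\partial_x v_t\|_{L^2}\leq C\|u\|_{H^1}^3$. Bounding the inner integral by $h\sup_{\sigma\in[t_m,t_{m+1}]}\|\partial_x v_t(\sigma)\|_{L^2}$ and the outer integral by another factor of $h$ produces the desired $h^2$ bound, with $C$ depending polynomially on $\max_{t\in[t_m,t_{m+1}]}\|u(t)\|_{H^1}$.

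For the stability estimate \eqref{eqn:app_stability_Q1}, the linearity of $\mathcal{Q}_1$ in the values $\partial_x u(t_m),\partial_x u(t_{m+1})$ gives
\begin{align*}
\mathcal{Q}_1[u,t_m,t_{m+1}]-\mathcal{Q}_1[w,t_m,t_{m+1}]=h\varphi_1(ih\partial_x^2)\frac{e^{-ih\partial_x^2}\partial_x(u(t_{m+1})-w(t_{m+1}))+\partial_x(u(t_m)-w(t_m))}{2}.
\end{align*}
Since $|\varphi_1(i\xi)|=|e^{i\xi}-1|/|\xi|\leq 1$ on $\mathbb{R}$, the Fourier multiplier $\varphi_1(ih\partial_x^2)$ has operator norm at most $1$ on $L^2$ uniformly in $h$, and $e^{-ih\partial_x^2}$ is an $L^2$-isometry. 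Combined with $\|\partial_x f\|_{L^2}\leq \|f\|_{H^1}$, this yields the claimed bound with an absolute constant $D$.

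The main (and essentially only) obstacle is the local-error bound, where one must recognise that the apparently second-order time integrand $\partial_x u(s)$ is well approximated by the midpoint of its endpoint values \emph{only after twisting}; a naive midpoint rule applied directly to $\partial_x u$ would require an extra time derivative of $u$ (hence at least $H^3$ regularity via the NLS equation), whereas the twisted formulation reduces this to $\partial_x v_t$, which costs only one spatial derivative of $|u|^2u$ and thus requires merely $\|u\|_{H^1}$. The remainder of the argument is routine.
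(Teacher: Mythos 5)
Your proposal is correct and follows essentially the same route as the paper: both parts are proved by passing to the twisted variable, estimating the midpoint error of $v$ via the fundamental theorem of calculus, the $L^2$-isometry of $e^{is\partial_x^2}$ and the Banach algebra property of $H^1(\mathbb{T})$ for the local error, and using linearity together with uniform boundedness of the relevant multipliers for the stability bound. The only (cosmetic) difference is that you phrase the stability step via the bound $|\varphi_1(i\xi)|\leq 1$ on the Fourier multiplier, whereas the paper keeps the integral form in the twisted variable; these are equivalent.
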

\begin{proof}We begin by proving \eqref{eqn:app_error_Q1}: By construction of the quadrature we have
	\begin{align*}
		\left\|I_1[u,t_m,t_{m+1}]- \mathcal{Q}_1[u,t_m,t_{m+1}]\right\|_{L^2}&=\left\|\int_{t_m}^{t_{m+1}} e^{i{\partial_{x}^2} s}\partial_x\left(v(s)-\frac{v(t_{m+1})+v(t_m)}{2}\right)\dd s\right\|_{L^2}\\
		&\leq h\max_{s\in[t_m,t_{m+1}]}\left\|v(s)-\frac{v(t_{m+1})+v(t_m)}{2}\right\|_{H^1}.
	\end{align*}
	Finally, we note using \eqref{eqn:twisted_variable_equation} that
	\begin{align*}
		\left\|v(s)-\frac{v(t_{m+1})+v(t_m)}{2}\right\|_{H^1}\!\!\!\!\!\!&=\frac{1}{4}\left\|\int_{t_m}^s \!\!\!e^{-it{\partial_{x}^2}}\left[\left|e^{it{\partial_{x}^2}}v{(t)}\right|^2\!e^{it{\partial_{x}^2}}v{(t)}\right]\dd {t} -\int_{s}^{t_{m+1}}\!\!\!\!\!\!e^{-it{\partial_{x}^2}}\left[\left|e^{it{\partial_{x}^2}}v{(t)}\right|^2e^{it{\partial_{x}^2}}v{(t)}\right]\dd {t}\right\|_{H^1}\\
		&\leq \frac{h}{2}{\sup_{t\in[t_m,t_{m+1}]}}\left\|u{(t)}\right\|_{H^1}^3,
	\end{align*}
	from which \eqref{eqn:app_error_Q1} follows. To prove \eqref{eqn:app_stability_Q1} we let $z(t)=\exp(-it{\partial_{x}^2})w(t)$ such that
	\begin{align*}
	&\left\|\mathcal{Q}_1[u,t_m,t_{m+1}]-\mathcal{Q}_1[w,t_m,t_{m+1}]\right\|_{L^2}\\
	&\quad\quad\quad\quad\leq\frac{1}{2}\left\|\int_{t_m}^{t_{m+1}} e^{i{\partial_{x}^2} s}\partial_x\left(z(t_{m+1})-v(t_{m+1})\right)\dd s\right\|_{L^2}+\frac{1}{2}\left\|\int_{t_m}^{t_{m+1}} e^{i{\partial_{x}^2} s}\partial_x\left(z(t_{m})-v(t_{m})\right)\dd s\right\|_{L^2}\\
	&\quad\quad\quad\quad\leq\frac{h}{2}\|z(t_{m+1})-v(t_{m+1})\|_{L^2}+\frac{h}{2}\|z(t_{m})-v(t_{m})\|_{L^2},
	\end{align*}
from which \eqref{eqn:app_stability_Q1} follows.
\end{proof}
\section{Fast computation of index-restricted convolution-type sums}\label{app:fast_computation_of_index_restricted_convolutions}
Here we describe how we can exploit fast Toeplitz matrix vector products to compute the quadrature $\mathcal{Q}_2[u,t_m,t_{m+1}]$ on a spatial grid of size $N$ in just $\mathcal{O}(N(\log N)^2)$ operations. The algorithm is motivated by the work of \cite[Section 3]{hairer1985fast} and \cite[Section 4.1]{banjai2010multistep} on fast convolutional quadrature. We recall from section~\ref{sec:low_reg_hasimoto_transfrom} that our goal is the computation of 
\begin{align*}
	\mathcal{Q}_2[u,t_m,t_{m+1}](x_n)&=\sum_{l\in\mathbb{Z}}e^{il x_n}\left[\sum_{\substack{l=k_1-k_2\\k_1^2\geq k_2^2}}h\varphi_1(-ih k_1^2) \frac{e^{i h k_1^2}\hat{u}_{k_1}(t_{m+1})+\hat{u}_{k_1}(t_m)}{2} \frac{\overline{\hat{u}_{k_2}(t_{m+1})}+\overline{\hat{u}_{k_2}(t_m)}}{2}\right.\\
	&\quad\quad\quad\quad\quad\quad\quad\quad+\left.\sum_{\substack{l=k_1-k_2\\k_1^2< k_2^2}}h\varphi_1(ih k_2^2) \frac{\hat{u}_{k_1}(t_{m+1})+\hat{u}_{k_1}(t_m)}{2} \frac{\overline{e^{i h k_2^2}\hat{u}_{k_2}(t_{m+1})}+\overline{\hat{u}_{k_2}(t_m)}}{2}\right],
\end{align*}
for $n=-N/2+1,\dots, N/2$ where $x_n=2\pi\frac{n}{N}$, in $\mathcal{O}\left(N(\log N)^2\right)$ operations from given values of $\hat{v}_{k}(t_{m}),\hat{v}_{k}(t_{m+1}), k=-N/2+1,\dots N/2$. Let us focus on the first sum in this expression, as the second one can be computed analogously - in particular we aim to compute the values
\begin{align*}
	S_n:=\sum_{\substack{n=k_1-k_2\\k_1^2\geq k_2^2}}h\varphi_1(-ih k_1^2) \frac{e^{i h k_1^2}\hat{u}_{k_1}(t_{m+1})+\hat{u}_{k_1}(t_m)}{2} \frac{\overline{\hat{u}_{k_2}(t_{m+1})}+\overline{\hat{u}_{k_2}(t_m)}}{2},
\end{align*}
where we note that the corresponding contribution to $\mathcal{Q}_2[u,t_m,t_{m+1}](x_j), j=-N/2+1,\dots, N/2,$ from $S_n, n=-N/2+1,\dots, N/2,$ can be found in $\mathcal{O}(N\log N)$ operations by applying an FFT. Let us write
\begin{align*}
	w_{k}&:=h\varphi_1(-ih k^2) \frac{e^{i h k^2}\hat{u}_{k}(t_{m+1})+\hat{u}_{k}(t_m)}{2},\quad\text{and}\quad z_{j}:=\frac{\overline{\hat{u}_{-j}(t_{m+1})}+\overline{\hat{u}_{-j}(t_m)}}{2},
\end{align*}
then the computation of $S_n$ reduces to the following index-restricted convolution
\begin{align*}
S_n=\sum_{\substack{n=k+j\\k^2\geq j^2}}w_{k}z_{j}, \quad -N/2+1\leq {n}\leq N/2.
\end{align*}
All values of $w_k,z_j,$ for $k,j=-N/2+1,\dots, N/2$ can be computed in $\mathcal{O}(N)$ operations from $\hat{u}_{k}(t_{m}),\hat{u}_{k}(t_{m+1}),k=-N/2+1,\dots,N/2,$ by simply multiplying (and reversing) the list of values ${\hat{u}_{n}}, n=-N/2+1,\dots, N/2$. We note that if ${n}\geq 0$, then the condition $k^2\geq j^2$ is equivalent to $k\geq j$ and if ${n}<0$ the condition is equivalent to $k<j$. Thus
\begin{align*}
	S_n=\begin{cases}
		S_n^+:=\sum_{\substack{n=k+j\\k\geq j}}w_{k}z_{j},&{n}\geq 0,\\
		S_n^-:=\sum_{\substack{n=k+j\\k<j}}w_{k}z_{j},& {n}<0,
	\end{cases}
\end{align*}
and we can separately compute each vector $S_n^\pm, n=-N/2+1,\dots, N/2$ to recover $S_n$ for all $n=-N/2+1,\dots, N/2$ in $\mathcal{O}(N)$ operations. Let  us describe how to compute $S_n^+$, and note that the second sum $S_n^-$ can be computed analogously (or alternatively in $\mathcal{O}(N\log N)$ operations by subtracting $S_n^+$ from the full convolution $\sum_{n=k+j}w_{k}z_{j}$). We have
\begin{align*}
	S_n^+=\sum_{\substack{-N/2+1\leq k\leq N/2 \\k\geq n-k}} v_{n-k}w_{k}.
\end{align*}
This can be seen as a matrix vector multiplication of the vector $\mathbf{w}=(w_k)_{k=-N/2+1}^{N/2}$, by the matrix $\mathsf{V}$:
\begin{align*}
	\mathsf{V}_{kn}=\begin{cases}
		v_{n-k}, &\text{if\ } -N/2+1\leq n-k\leq N/2\text{\ and\ } n-k\leq k,\\
		0,&\text{otherwise}.
	\end{cases}
\end{align*}
We will focus on the case when $N=2^{N_0}, N_0\in\mathbb{N},$ and note that any other case of $N\in\mathbb{N}$ can be reduced to this by zero padding. An example for $N=16$ of this matrix is shown in Fig.~\ref{fig:sketch_of_first_matrix_subdivision}. Our methodology is now to partition the non-zero entries of the matrix $\mathsf{V}$ into Toeplitz blocks in a structured way, noting that the multiplication by a Toeplitz matrix of size $M\times M$ can be performed in $\mathcal{O}(M\log M)$ operations (for instance by embedding into a $2M\times 2M$ circulant matrix and using FFT).

In what follows we will describe an algorithm to partition the non-zero entries of $\mathsf{V}$ into Toeplitz blocks of the following sizes:
\begin{itemize}
    \item $1$ Toeplitz block, $\mathsf{A}^{(1,1)}$, of size $N/2\times N/2$,
    \item $2$ Toeplitz blocks, $\mathsf{A}^{(2,1)},\mathsf{A}^{(2,2)}$, of sizes $N/4\times N/4$,\ \\\vspace{-1cm}\\
    \begin{align*}
        \vdots\hspace{10cm}
    \end{align*}\ \\\vspace{-1.75cm}\\
    \item $N/4$ Toeplitz blocks, $\mathsf{A}^{(\log_2 N-1,j)},j=1,\dots N/4$, of sizes $2\times 2$,
    \item $N$ blocks, $\mathsf{A}^{(\log_2 N,j)},j=1,\dots N$, of size $1\times 1$.
\end{itemize}
Following this partition we can perform the matrix vector product $\mathbf{S}^+=V\mathbf{w}$ by using blockwise fast Toeplitz products in the following number of operations
\begin{align*}
\underbrace{N}_{(I)}+\sum_{j=1}^{\log_2 N-1}\underbrace{2^{j-1}}_{(II)}\underbrace{\mathcal{O}\left(\frac{N}{2^j}\log N\right)}_{(III)}+\underbrace{\frac{N}{2}}_{(IV)}=\mathcal{O}(N(\log N)^2),
\end{align*}
where $(I)$ corresponds to the cost of $N$ matrix multiplications of size $1\times 1$, $(II)$ corresponds to the corresponds to the number of matrices of size $\frac{N}{2^j}\times\frac{N}{2^j}$, $(III)$ corresponds to the cost of the fast Toeplitz product for a matrix of size $\frac{N}{2^j}\times\frac{N}{2^j}$, $(IV)$ and $(IV)$ corresponds to the cost of adding this contribution to the final output vector $\mathbf{S}^+$. It remains to show how we partition $\mathsf{V}$ into the Toeplitz blocks $\mathsf{A}^{(i,j)}$. We begin by defining the following three initial blocks:
\begin{align*}
    \mathsf{A}^{(1,1)}_{ij}&=V_{(i+N/2)j},\quad i,j=1,\dots, N/2,\\
    \mathsf{A}^{(2,1)}_{ij}&=V_{ij},\quad i,j=1,\dots, N/4,\\
    \mathsf{A}^{(2,2)}_{ij}&=V_{(i+N/4)j},\quad i,j=1,\dots, N/4.
\end{align*}
The remaining nonzero entries of $\mathsf{V}$ are contained in two rectangular blocks $\mathsf{B}^{(2,1)},\mathsf{B}^{(2,2)}$ given by
\begin{align*}
    \mathsf{B}^{(2,1)}_{ij}&=\mathsf{V}_{i(j+N/4)},\quad i=1,\dots,N/2,j=1,\dots, N/4,\\
    \mathsf{B}^{(2,1)}_{ij}&=\mathsf{V}_{(i+N/2)(j+3N/4)},\quad i=1,\dots,N/2,j=1,\dots, N/4.
\end{align*}
This partition is shown in Fig.~\ref{fig:sketch_of_first_matrix_subdivision}. The matrices $\mathsf{B}^{(2,1)},\mathsf{B}^{(2,2)}$ are of the following general form.
\begin{definition} We say a $2^{k+1}\times 2^{k}$-matrix $\mathsf{B}$, some $k\in\mathbb{N}$, is of type M if it has entries of the form
\begin{align*}
    \mathsf{B}_{ij}=\begin{cases}
        c_{i-j},&\text{if\ }i\geq 2j,\\
        0,&\text{otherwise}.
    \end{cases}
\end{align*}
for some $c_{j}\in\mathbb{C},j=1,\dots 2^{k+1}-1$.
\end{definition}

\begin{figure}[h!]\centering
\begin{subfigure}{0.7\textwidth}
\begin{tikzpicture}[>=stealth,thick,baseline]
		\matrix [matrix of math nodes,left delimiter=(,right delimiter=)](A){ 
			v_{0} &v_{-1}&v_{-2}&v_{-3}&0&0&0&0&0&0&0&0&0&0&0&0\\
			v_{1} &v_{0}&v_{-1}&v_{-2}&v_{-3}&0&0&0&0&0&0&0&0&0&0&0\\
			v_{2} &v_{1}&v_{0}&v_{-1}&v_{-2}&0&0&0&0&0&0&0&0&0&0&0\\
			v_{3} &v_{2}&v_{1}&v_{0}&v_{-1}&v_{-2}&0&0&0&0&0&0&0&0&0&0\\
			v_{4} &v_{3}&v_{2}&v_{1}&v_{0}&v_{-1}&0&0&0&0&0&0&0&0&0&0\\
			v_{5} &v_{4}&v_{3}&v_{2}&v_{1}&v_{0}&v_{-1}&0&0&0&0&0&0&0&0&0\\
			v_{6} & v_{5} &v_{4}&v_{3}&v_{2}&v_{1}&v_{0}&0&0&0&0&0&0&0&0&0\\
			v_{7} & v_{6} & v_{5} &v_{4}&v_{3}&v_{2}&v_{1}&v_{0}&0&0&0&0&0&0&0&0\\
			v_{8} & v_{7} & v_{6} &v_{5}&v_{4}&v_{3}&v_{2}&v_{1}&0&0&0&0&0&0&0&0\\
			0 & v_{8} & v_{7} &v_{6}&v_{5}&v_{4}&v_{3}&v_{2}&v_{1}&0&0&0&0&0&0&0\\
			0& 0 & v_{8} & v_{7} &v_{6}&v_{5}&v_{4}&v_{3}&v_{2}&0&0&0&0&0&0&0\\
			0& 0& 0 & v_{8} & v_{7} &v_{6}&v_{5}&v_{4}&v_{3}&v_{2}&0&0&0&0&0&0\\
			0& 0& 0& 0 & v_{8} & v_{7} &v_{6}&v_{5}&v_{4}&v_{3}&0&0&0&0&0&0\\
			0&0& 0& 0& 0 & v_{8} & v_{7} &v_{6}&v_{5}&v_{4}&v_{3}&0&0&0&0&0\\
			0&0&0& 0& 0& 0 & v_{8} & v_{7} &v_{6}&v_{5}&v_{4}&0&0&0&0&0\\
			0&0&0&0& 0& 0& 0 & v_{8} & v_{7} &v_{6}&v_{5}&v_{4}&0&0&0&0\\
		};
	
	\node[rectangle,draw,thick,line width=2pt,
	text = olive,
	fill opacity=0.5,
	fill = green!10!white,
	minimum width = 5.75cm, 
	minimum height = 4.0cm,
	yshift=-0.25cm,xshift=0.4cm] (r) at (A-12-4) {};

	\node[rectangle,draw,dashed,line width=2pt,
	text = olive,
	fill opacity=0.5,
	fill = blue!10!white,
	minimum width = 2.8cm, 
	minimum height = 2.1cm,
	yshift=-0.2cm,xshift=0.45cm] (r) at (A-6-2) {};

	\node[rectangle,draw,dashed,line width=2pt,
	text = olive,
	fill opacity=0.5,
	fill = blue!10!white,
	minimum width = 2.8cm, 
	minimum height = 2.1cm,
	yshift=-0.25cm,xshift=0.45cm] (r) at (A-2-2) {};

 	\node[rectangle,draw, dashdotted,line width=2pt,
	text = olive,
	fill opacity=0.5,
	fill = red!10!white,
	minimum width = 2.9cm, 
	minimum height = 4.2cm,
	yshift=-0.2cm,xshift=0.3cm] (r) at (A-4-6) {};

 	\node[rectangle,draw, dashdotted,line width=2pt,
	text = olive,
	fill opacity=0.5,
	fill = red!10!white,
	minimum width = 2.2cm, 
	minimum height = 4.05cm,
	yshift=-0.22cm,xshift=0.3cm] (r) at (A-12-10) {};

	\matrix [matrix of math nodes,left delimiter=(,right delimiter=)](B){ 
	v_{0} &v_{-1}&v_{-2}&v_{-3}&0&0&0&0&0&0&0&0&0&0&0&0\\
	v_{1} &v_{0}&v_{-1}&v_{-2}&v_{-3}&0&0&0&0&0&0&0&0&0&0&0\\
	v_{2} &v_{1}&v_{0}&v_{-1}&v_{-2}&0&0&0&0&0&0&0&0&0&0&0\\
	v_{3} &v_{2}&v_{1}&v_{0}&v_{-1}&v_{-2}&0&0&0&0&0&0&0&0&0&0\\
	v_{4} &v_{3}&v_{2}&v_{1}&v_{0}&v_{-1}&0&0&0&0&0&0&0&0&0&0\\
	v_{5} &v_{4}&v_{3}&v_{2}&v_{1}&v_{0}&v_{-1}&0&0&0&0&0&0&0&0&0\\
	v_{6} & v_{5} &v_{4}&v_{3}&v_{2}&v_{1}&v_{0}&0&0&0&0&0&0&0&0&0\\
	v_{7} & v_{6} & v_{5} &v_{4}&v_{3}&v_{2}&v_{1}&v_{0}&0&0&0&0&0&0&0&0\\
	v_{8} & v_{7} & v_{6} &v_{5}&v_{4}&v_{3}&v_{2}&v_{1}&0&0&0&0&0&0&0&0\\
	0 & v_{8} & v_{7} &v_{6}&v_{5}&v_{4}&v_{3}&v_{2}&v_{1}&0&0&0&0&0&0&0\\
	0& 0 & v_{8} & v_{7} &v_{6}&v_{5}&v_{4}&v_{3}&v_{2}&0&0&0&0&0&0&0\\
	0& 0& 0 & v_{8} & v_{7} &v_{6}&v_{5}&v_{4}&v_{3}&v_{2}&0&0&0&0&0&0\\
	0& 0& 0& 0 & v_{8} & v_{7} &v_{6}&v_{5}&v_{4}&v_{3}&0&0&0&0&0&0\\
	0&0& 0& 0& 0 & v_{8} & v_{7} &v_{6}&v_{5}&v_{4}&v_{3}&0&0&0&0&0\\
	0&0&0& 0& 0& 0 & v_{8} & v_{7} &v_{6}&v_{5}&v_{4}&0&0&0&0&0\\
	0&0&0&0& 0& 0& 0 & v_{8} & v_{7} &v_{6}&v_{5}&v_{4}&0&0&0&0\\
};
	\end{tikzpicture}
		\caption{The matrix $\mathsf{V}$.}
  \label{fig:sketch_of_first_matrix_subdivision}
 \end{subfigure}
\begin{subfigure}{0.2\textwidth}
	\begin{tikzpicture}[>=stealth,thick,baseline]
		\matrix [matrix of math nodes,left delimiter=(,right delimiter=)](A){ 
			0&0&0&0\\
			c_{1} &0&0&0\\
			c_{2} &0&0&0\\
			c_{3} &c_{2}&0&0\\
			c_{4} & c_{3}&0&0\\
			c_{5} &c_{4}&c_{3}&0\\
			c_{6} & c_{5} &c_{4}&0\\
			c_{7} & c_{6} & c_{5} &c_{4}\\
		};
	
	\node[rectangle,draw,thick,line width=2pt,
	text = olive,
	fill opacity=0.5,
	fill = green!10!white,
	minimum width = 1.1cm, 
	minimum height = 1.0cm,
	yshift=-0.3cm,xshift=0.255cm] (r) at (A-7-1) {};

	\node[rectangle,draw,thick,line width=2pt,
	text = olive,
	fill opacity=0.5,
	fill = green!10!white,
	minimum width = 1.1cm, 
	minimum height = 1.0cm,
	yshift=-0.3cm,xshift=0.255cm] (r) at (A-5-1) {};

	\node[rectangle,draw,dashed,thick,line width=2pt,
	text = olive,
	fill opacity=0.5,
	fill = blue!10!white,
	minimum width = 1.1cm, 
	minimum height = 2.05cm,
	yshift=-0.25cm,xshift=0.255cm] (r) at (A-2-1) {};

	\node[rectangle,draw,dashed,thick,line width=2pt,
	text = olive,
	fill opacity=0.5,
	fill = blue!10!white,
	minimum width = 1.1cm, 
	minimum height = 2.05cm,
	yshift=-0.265cm,xshift=0.29cm] (r) at (A-6-3) {};

	\matrix [matrix of math nodes,left delimiter=(,right delimiter=)](B){ 
			0&0&0&0\\
			c_{1} &0&0&0\\
			c_{2} &0&0&0\\
			c_{3} &c_{2}&0&0\\
			c_{4} & c_{3}&0&0\\
			c_{5} &c_{4}&c_{3}&0\\
			c_{6} & c_{5} &c_{4}&0\\
			c_{7} & c_{6} & c_{5} &c_{4}\\
		};
		
	\end{tikzpicture}
 \caption{The matrix $\mathsf{B}$.}
\label{fig:example_type_M_decomposition}
 \end{subfigure}
\caption{(a) The matrix $\mathsf{V}$ and its partitioning into $\mathsf{A}^{(1,1)}$ (green with solid boundary), $\mathsf{A}^{(2,1)},\mathsf{A}^{(2,2)}$ (blue with dashed boundary) and $\mathsf{B}^{(2,1)},\mathsf{B}^{(2,2)}$ (red with dashdotted boundary). (b) The matrix $\mathsf{B}$ and its partition into $\mathsf{A}^{(1)},\mathsf{A}^{(2)}$ (green with solid boundary) and $\mathsf{B}^{(1)},\mathsf{B}^{(2)}$ (blue with dashed boundary).}
\end{figure}
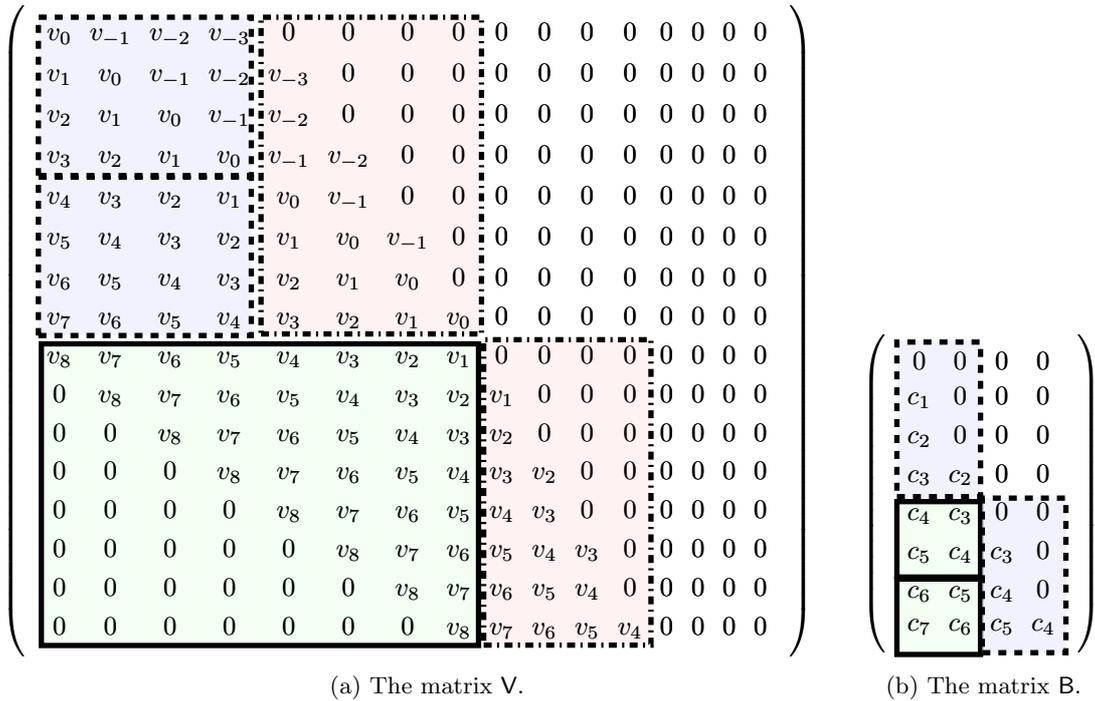
The nonzero entries of each matrix $\mathsf{B}$ of type M with size $2^{k+1}\times 2^{k}$ can be partitioned into two Toeplitz blocks $\mathsf{A}^{(1)},\mathsf{A}^{(2)}$ of size $2^{k-1}\times 2^{k-1}$ and two matrices $\mathsf{B}^{(1)}$ and $\mathsf{B}^{(2)}$ which are both of type M with size $2^{k}\times 2^{k-1}$, by taking
\begin{align*}
    \mathsf{A}^{(1)}_{ij}&=\mathsf{B}_{(i+2^{k})j},\quad i,j=1,\dots, 2^{k-1},\\
    \mathsf{A}^{(2)}_{ij}&=\mathsf{B}_{(i+2^{k}+2^{k-1})j},\quad i,j=1,\dots, 2^{k-1},\\
    \mathsf{B}^{(1)}_{ij}&=\mathsf{B}_{ij},\quad i=1,\dots, 2^{k}, j=1,\dots, 2^{k-1},\\
    \mathsf{B}^{(2)}_{ij}&=\mathsf{B}_{(i+2^{k})(j+2^{k-1})},\quad i=1,\dots, 2^{k}, j=1,\dots, 2^{k-1}.\\
\end{align*}
An example for the case $k=2$ is shown in Fig.~\ref{fig:example_type_M_decomposition}. We can repeat this process inductively until we have partitioned the non-zero entries of $\mathsf{B}$ into Toeplitz blocks and the remaining type M blocks are of size $2\times 1$, which can in turn be partitioned into two $1\times 1$ matrices each. Applying this process to $\mathsf{B}^{(2,1)}, \mathsf{B}^{(2,2)}$ yields exactly the desired partition of our matrix $\mathsf{V}$.

\section{Proof of Proposition~\ref{prop:quadrature_error_Q2}}\label{app:proof_of_error_Q2}
We recall the statement of the result:
\begin{proposition}
	If $u$ is the solution to \eqref{eqn:NLS_in_hasimoto_transform} then
	\begin{align}\label{eqn:app_error_Q2}
		\left\|I_2[u,t_m,t_{m+1}]- \mathcal{Q}_2[u,t_m,t_{m+1}]\right\|_{L^2}\leq h^2 C,
	\end{align}
	where $C>0$ depends on $\max_{t\in[t_m,t_{m+1}]}\|u(t)\|_{H^1}$. Moreover, for any two functions $u,w:[t_m,t_{m+1}]\times \mathbb{T}\rightarrow\mathbb{C}$ we have:
	\begin{align}\label{eqn:app_stability_Q2}
		\left\|\mathcal{Q}_2[u,t_m,t_{m+1}]- \mathcal{Q}_2[w,t_m,t_{m+1}]\right\|_{L^2}\leq D h \left(\left\|u(t_{m+1})-w(t_{m+1})\right\|_{H^1}+\left\|u(t_{m})-w(t_{m})\right\|_{H^1}\right),
	\end{align}
	where $D>0$ depends on $\max\left\{\|u(t_m)\|_{H^1}+\|w(t_m)\|_{H^1},\|u(t_{m+1})\|_{H^1}+\|w(t_m)\|_{H^1}\right\}$.
\end{proposition}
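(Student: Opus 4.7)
The plan is to work entirely in Fourier space and exploit the twisted variable $v(t):=e^{-it\partial_x^2}u(t)$, for which $\hat v_k(t)=e^{itk^2}\hat u_k(t)$ and, by the NLS equation, $\partial_t\hat v_k(t)=\tfrac{i}{2}e^{itk^2}\widehat{|u|^2u}_k(t)$, so that $|\partial_t\hat v_k|\leq \tfrac{1}{2}|\widehat{|u|^2u}_k|$. By the symmetry $(k_1,k_2)\leftrightarrow(-k_2,-k_1)$, it suffices to analyse the $k_1^2\geq k_2^2$ piece. Setting $f_1(s):=\hat v_{k_1}(t_m+s)$, $\phi(s):=\overline{\hat v_{k_2}(t_m+s)}$ and $f_2(s):=e^{isk_2^2}\phi(s)$, the per-pair exact contribution is $e^{-it_m(k_1^2-k_2^2)}\int_0^h e^{-isk_1^2}f_1(s)f_2(s)\,ds$, while the quadrature yields $e^{-it_m(k_1^2-k_2^2)}h\varphi_1(-ihk_1^2)\bar f_1\bar f_2$ with $\bar f_i:=(f_i(0)+f_i(h))/2$.

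For the local error \eqref{eqn:app_error_Q2}, I would decompose
\begin{align*}
f_1 f_2-\bar f_1\bar f_2 = \bar f_2(f_1-\bar f_1)+\bar f_1(f_2-\bar f_2)+(f_1-\bar f_1)(f_2-\bar f_2),
\end{align*}
write $f_i(s)-\bar f_i=\tfrac12\int_0^h\operatorname{sgn}(s-r)f_i'(r)\,dr$, and integrate against $e^{-isk_1^2}$. Fubini yields, for $k_1\neq 0$,
\begin{align*}
\int_0^h e^{-isk_1^2}(f_i(s)-\bar f_i)\,ds=\frac{1}{-2ik_1^2}\int_0^h f_i'(r)\bigl[e^{-ihk_1^2}-2e^{-irk_1^2}+1\bigr]dr,
\end{align*}
with a direct trapezoidal bound for $k_1=0$. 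The factor $1/k_1^2$ combined with $|e^{-ihk_1^2}-2e^{-irk_1^2}+1|\leq\min(4,2hk_1^2)$ produces a uniform $O(h^2)$ bound multiplying $\|f_i'\|_\infty$. For $f_1$ the bound $\|f_1'\|_\infty\leq\tfrac12\|\widehat{|u|^2u}_{k_1}\|_\infty$ closes that contribution; for $f_2$ an analogous expansion of the symbol $\Delta(k_1^2,k_2^2):=h\varphi_1(-ih(k_1^2-k_2^2))-h\varphi_1(-ihk_1^2)\tfrac{1+e^{ihk_2^2}}{2}$, whose Taylor series starts at $\tfrac{h^3}{12}k_2^2(k_1^2+k_2^2)$, gives $|\Delta|\lesssim h^2(k_1^2+k_2^2)$ under $k_1^2\geq k_2^2$. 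Summing over $l=k_1-k_2$ via Plancherel and Young's convolution inequality, together with the Banach algebra estimate $\||u|^2u\|_{H^1}\lesssim\|u\|_{H^1}^3$, yields the advertised $h^2 C$ bound.

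For the stability estimate \eqref{eqn:app_stability_Q2} the same framework applies to $\mathcal Q_2[u]-\mathcal Q_2[w]$: using the bilinear telescoping identity
\begin{align*}
\bar f_1[u]\bar f_2[u]-\bar f_1[w]\bar f_2[w]=(\bar f_1[u]-\bar f_1[w])\bar f_2[u]+\bar f_1[w](\bar f_2[u]-\bar f_2[w]),
\end{align*}
and observing that each difference depends only on the endpoint values $u(t_m)-w(t_m)$ and $u(t_{m+1})-w(t_{m+1})$, the convolution-type sums in Fourier, again with the $H^1$ Banach algebra, reduce the estimate to the Lipschitz form claimed.

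The main obstacle is the $f_2$ piece: since $f_2$ oscillates at frequency $k_2^2$ its derivative carries a large $ik_2^2 f_2$ term that, if treated naively, would force $H^2$ regularity on $u$. The saving grace, built into the quadrature by design, is the cutoff $k_1^2\geq k_2^2$ together with the $1/k_1^2$ gained from integrating against $e^{-isk_1^2}$, which via $k_2^2/k_1^2\leq 1$ exactly absorbs the bad factor and recovers the $O(h^2)$ bound at the advertised regularity level; keeping careful track of the resulting cancellations in $\Delta$ is the technical heart of the argument.
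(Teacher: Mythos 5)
Your Fourier-side setup is faithful to the definition of $\mathcal{Q}_2$ (with your $f_1,f_2$ the quadrature is exactly $h\varphi_1(-ihk_1^2)\bar f_1\bar f_2$ per pair), and your stability argument coincides with the paper's (bilinear telescoping plus the $H^1$ algebra property). For the local error \eqref{eqn:app_error_Q2}, however, you take a genuinely different and more delicate route than the paper. The paper never integrates by parts against the oscillation $e^{-isk_1^2}$: it splits the error into (i) a phase-replacement error, bounded pointwise by $|e^{isk_2^2}-1|\le sk_2^2\le s|k_1||k_2|$ using the cutoff $k_1^2\ge k_2^2$, and (ii) a midpoint error for the slowly varying amplitudes, bounded by $O(h)$ via the twisted equation \eqref{eqn:twisted_variable_equation} exactly as in Prop.~\ref{prop:quadrature_error_Q1}; in both pieces the second power of $h$ comes simply from the length of the integration interval. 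Your approach instead extracts a factor $1/k_1^2$ from the exact oscillatory kernel $e^{-ihk_1^2}-2e^{-irk_1^2}+1$ and trades it against the $k_2^2$ produced by differentiating $f_2$. Both mechanisms exploit the resonance cutoff in the same way, and both ultimately land on the same bilinear sums $\sum_{k_1-k_2=l}|k_1\hat u_{k_1}||k_2\hat u_{k_2}|$, so you lose nothing relative to the paper at the final summation step; but the paper's version is shorter because no second-order information about the integrand is ever needed.

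There is one concrete slip at what you yourself call the technical heart. The bound $|\Delta|\lesssim h^2(k_1^2+k_2^2)$ does \emph{not} close the argument: under $k_1^2\ge k_2^2$ this is of size $h^2k_1^2$, which after multiplying by $|\bar f_1\,\phi(0)|$ forces $k_1^2|\hat u_{k_1}|$ into the convolution and hence $H^2$ regularity in the $k_1$ variable, contradicting the claimed dependence on $\|u\|_{H^1}$ only. What your own computation actually delivers, uniformly in $h,k_1,k_2$, is
\begin{align*}
|\Delta| \;=\; \frac{k_2^2}{2}\left|\int_0^h e^{irk_2^2}\,\frac{e^{-ihk_1^2}-2e^{-irk_1^2}+1}{-ik_1^2}\,\mathrm{d}r\right| \;\le\; \frac{k_2^2}{2k_1^2}\cdot h\cdot\min\bigl(4,\,hk_1^2\bigr)\;\le\;\frac{h^2k_2^2}{2}\;\le\;\frac{h^2}{2}\,|k_1|\,|k_2|,
\end{align*}
which restores the one-derivative-per-factor structure needed for the $H^1$ estimate. (Your Taylor leading term $\tfrac{h^3}{12}k_2^2(k_1^2+k_2^2)\le\tfrac{h^3}{6}k_1^2k_2^2$ is consistent with this, but a Taylor expansion is not uniform in $hk_1^2$, so the uniform bound above is the one to quote.) You should also state explicitly that $\Delta$ arises after freezing the amplitude, $\phi(s)\mapsto\phi(0)$; the residual terms $g(s)(\phi(s)-\phi(0))$ are $O(h)\cdot O(h)$ by brute force since $\|\phi'\|_\infty\le\tfrac12\sup_t|\widehat{|u|^2u}_{k_2}(t)|$, and the product term $(f_1-\bar f_1)(f_2-\bar f_2)$ is handled by bounding the second factor trivially by $2\|\phi\|_\infty$ rather than through $f_2'$. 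With these corrections your argument closes at the same regularity as the paper's.
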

\begin{proof} We begin by proving \eqref{eqn:app_error_Q2}: Note that
	\begin{align*}
		\hspace{-0.4cm}\left\|I_2[u,t_m,t_{m+1}]- \mathcal{Q}_2[u,t_m,t_{m+1}]\right\|_{L^2}^2
	&=\sum_{l\in\mathbb{Z}}\left|\sum_{l=k_1-k_2}\int_{t_{m}}^{t_{m+1}}e^{-is(k_1^2-k_2^2)} \hat{v}_{k_1}(s)\overline{\hat{v}_{k_2}(s)}\dd s\right.\\&\left.-\left[\sum_{\substack{l=k_1-k_2\\k_1^2\geq k_2^2}}\int_{0}^{h}e^{-isk_1^2}\dd s \frac{e^{i h k_1^2}\hat{u}_{k_1}(t_{m+1})+\hat{u}_{k_1}(t_m)}{2} \frac{\overline{\hat{u}_{k_2}(t_{m+1})}+\overline{\hat{u}_{k_2}(t_m)}}{2}\right.\right.\\
  &+\left.\left.\sum_{\substack{l=k_1-k_2\\k_1^2< k_2^2}}\int_{0}^{h}e^{isk_2^2}\dd s \frac{\hat{u}_{k_1}(t_{m+1})+\hat{u}_{k_1}(t_m)}{2} \frac{\overline{e^{i h k_2^2}\hat{u}_{k_2}(t_{m+1})}+\overline{\hat{u}_{k_2}(t_m)}}{2}\right]\right|^2.
	\end{align*}
	Now we note that 
	\begin{align*}
 &\left|\sum_{\substack{l=k_1-k_2\\k_1^2\geq k_2^2}}\left(\int_{t_{m}}^{t_{m+1}}e^{-is(k_1^2-k_2^2)} \hat{v}_{k_1}(s)\overline{\hat{v}_{k_2}(s)}\dd s-\int_{0}^{h}e^{-isk_1^2}\dd s \frac{e^{i h k_1^2}\hat{u}_{k_1}(t_{m+1})+\hat{u}_{k_1}(t_m)}{2} \frac{\overline{\hat{u}_{k_2}(t_{m+1})}+\overline{\hat{u}_{k_2}(t_m)}}{2}\right)\right|\\
		&\quad\leq \left|\sum_{\substack{l=k_1-k_2\\k_1^2\geq k_2^2}}e^{-it_m(k_1^2-k_2^2)}\int_{0}^{h}\left[e^{-is(k_1^2-k_2^2)}-e^{-isk_1^2}\right] \hat{v}_{k_1}(t_m+s)\overline{\hat{v}_{k_2}(t_m+s)}\dd s\right|\\
		&\quad\quad+\left|\sum_{\substack{l=k_1-k_2\\k_1^2\geq k_2^2}} \int_{0}^{h}e^{-isk_1^2}\left[e^{i s k_1^2}\hat{u}_{k_1}(t_m+s)\overline{e^{i s k_2^2}\hat{u}_{k_2}(t_m+s)}\right.\right.\\
  &\quad\quad\quad\quad\quad\quad\quad\quad\quad\quad\quad\quad\quad\quad\quad\quad\left.\left.-\frac{e^{i h k_1^2}\hat{u}_{k_1}(t_{m+1})+\hat{u}_{k_1}(t_m)}{2} \frac{\overline{\hat{u}_{k_2}(t_{m+1})}+\overline{\hat{u}_{k_2}(t_m)}}{2}\right]\dd s\right|
	\end{align*}
Moreover
	\begin{align*}
		&\left|\sum_{\substack{l=k_1-k_2\\k_1^2\geq k_2^2}}e^{-it_m(k_1^2-k_2^2)}\int_{0}^{h}\left[e^{-is(k_1^2-k_2^2)}-e^{-isk_1^2}\right] \hat{v}_{k_1}(t_m+s)\overline{\hat{v}_{k_2}(t_m+s)}\dd s\right|\\
  &\quad\quad\quad\quad\quad\quad\quad\quad\quad\quad\quad\quad\leq\sum_{\substack{l=k_1-k_2\\k_1^2\geq k_2^2}}\int_{t_{m}}^{t_{m+1}}s\left|k_2\right|^2 \left|\hat{u}_{k_1}\right|\left|\overline{\hat{u}_{k_2}}\right|\dd s\leq h^2\sup_{t\in[t_m,t_{m+1}]}\sum_{\substack{l=k_1-k_2\\k_1^2\geq k_2^2}} \left|k_1\hat{u}_{k_1}(t)\right|\left|k_2\overline{\hat{u}_{k_2}}(t)\right|\\
		&\quad\quad\quad\quad\quad\quad\quad\quad\quad\quad\quad\quad\leq h^2\sup_{t\in[t_m,t_{m+1}]}\sum_{\substack{l=k_1-k_2}} \left|\widehat{\partial_xu}_{k_1}(t)\right|\left|\widehat{\partial_xu}_{k_2}(t)\right|\leq ch^2 \sup_{t\in[t_m,t_{m+1}]}\|u(t)\|_{H^1}^2.
	\end{align*}
	A similar estimate holds true for the terms involving $\sum_{\substack{l=k_1-k_2\\k_1^2< k_2^2}}$ and so we find that
	\begin{align*}
		&\left\|I_2[u,t_m,t_{m+1}]- \mathcal{Q}_2[u,t_m,t_{m+1}]\right\|_{L^2}^2\\&\quad\quad\quad\lesssim h^2\sup_{t\in[t_m,t_{m+1}]}\|v(t)\|_{H^1}^2\\
  &\quad\quad\quad\quad+\left\|\int_{0}^{h}\left(e^{i s{\partial_{x}^2}}\left(u(t_{m}+s)-\frac{e^{-i h{\partial_{x}^2}}u(t_{m+1})+u(t_m)}{2}\right)\right)\overline{\left(u(t_m+s)-\frac{u(t_{m+1})+u(t_m)}{2}\right)}\dd s\right\|_{H^1}^2\\
		&\quad\quad\quad\quad\quad\quad+\left\|\int_{0}^{h}\left(u(t_m+s)-\frac{u(t_{m+1})+u(t_m)}{2}\right)\overline{\left(e^{i s{\partial_{x}^2}}\left(u(t_{m}+s)-\frac{e^{-i h{\partial_{x}^2}}u(t_{m+1})+u(t_m)}{2}\right)\right)}\dd s\right\|_{H^1}^2\end{align*}
and thus $\left\|I_2[u,t_m,t_{m+1}]- \mathcal{Q}_2[u,t_m,t_{m+1}]\right\|_{L^2}^2\lesssim h^2\sup_{t\in[t_m,t_{m+1}]}\|u(t)\|_{H^{1}}^2$	using similar estimates to the proof of Prop.~\ref{prop:quadrature_error_Q1}.
 
 To prove \eqref{eqn:app_stability_Q2} we let again $z(t)=\exp(-it{\partial_{x}^2})w(t)$ and observe that by the definition of $\varphi_1,\varphi_2$ we have 
 \begin{align*}
     &\left\|\mathcal{Q}_2[u,t_m,t_{m+1}]- \mathcal{Q}_2[w,t_m,t_{m+1}]\right\|_{L^2}\\
     &\quad\quad\leq h D\left\|\left(e^{-i h{\partial_{x}^2}}u(t_{m+1})+u(t_m)\right)\overline{\left(u(t_{m+1})+u(t_m)\right)}-\left(e^{-i h{\partial_{x}^2}}w(t_{m+1})+w(t_m)\right)\overline{\left(w(t_{m+1})+w(t_m)\right)}\right\|_{H^1}
 \end{align*}
 for some constant $D>0$ independent of $w,u,t_m,h$. The result then follows immediately by submultiplicativity of $\|\cdot\|_{H^1}$.
\end{proof}
\end{appendices}

\addcontentsline{toc}{section}{References}
\bibliographystyle{siam}     
\nocite{*}  
\bibliography{biblio1}

\begin{thebibliography}{10}

\bibitem{AlamaBronsard23}
{\sc Y.~Alama~Bronsard}, {\em {A symmetric low-regularity integrator for the
  nonlinear Schr\"{o}dinger equation}}, arXiv preprint, arXiv:2301.13109
  (2023).

\bibitem{bronsard2022error}
{\sc Y.~{Alama Bronsard}}, {\em {Error analysis of a class of semi-discrete
  schemes for solving the Gross--Pitaevskii equation at low regularity}}, J.
  Comput. Appl. Math., 418 (2023), p.~114632.

\bibitem{alamabrunedmaierhoferschratz23}
{\sc Y.~Alama~Bronsard, Y.~Bruned, G.~Maierhofer, and K.~Schratz}, {\em
  {Symmetric resonance based integrators and forest formulae}}, arXiv preprint,
  arXiv:2305.16737 (2023).

\bibitem{banica2009stability}
{\sc V.~Banica and L.~Vega}, {\em On the stability of a singular vortex
  dynamics}, Commun. Math. Phys., 286 (2009), pp.~593--627.

\bibitem{banica2013stability}
\leavevmode\vrule height 2pt depth -1.6pt width 23pt, {\em Stability of the
  self-similar dynamics of a vortex filament}, Archive for Rational Mechanics
  and Analysis, 210 (2013), pp.~673--712.

\bibitem{banica2013}
\leavevmode\vrule height 2pt depth -1.6pt width 23pt, {\em {The initial value
  problem for the Binormal Flow with rough data}}, {Ann. Sci. de l'Ecole Norm.
  Sup\'erieure}, 48 (2015), pp.~1423--1455.

\bibitem{banica2020evolution}
\leavevmode\vrule height 2pt depth -1.6pt width 23pt, {\em Evolution of
  polygonal lines by the binormal flow}, Ann. PDE, 6 (2020), pp.~1--53.

\bibitem{banjai2010multistep}
{\sc L.~Banjai}, {\em Multistep and multistage convolution quadrature for the
  wave equation: algorithms and experiments}, SIAM J. Sci. Comput., 32 (2010),
  pp.~2964--2994.

\bibitem{besse2002order}
{\sc C.~Besse, B.~Bid{\'e}garay, and S.~Descombes}, {\em {Order estimates in
  time of splitting methods for the nonlinear Schr{\"o}dinger equation}}, SIAM
  J. Numer. Anal., 40 (2002), pp.~26--40.

\bibitem{blanes2013optimized}
{\sc S.~Blanes, F.~Casas, P.~Chartier, and A.~Murua}, {\em Optimized high-order
  splitting methods for some classes of parabolic equations}, Math. Comput., 82
  (2013), pp.~1559--1576.

\bibitem{bruned2022resonance}
{\sc Y.~Bruned and K.~Schratz}, {\em Resonance-based schemes for dispersive
  equations via decorated trees}, Forum Math. Pi, 10 (2022).

\bibitem{buttke1988numerical}
{\sc T.~F. Buttke}, {\em A numerical study of superfluid turbulence in the
  self-induction approximation}, J. Comput. Phys., 76 (1988), pp.~301--326.

\bibitem{Calini_2011}
{\sc A.~Calini, S.~F. Keith, and S.~Lafortune}, {\em Squared eigenfunctions and
  linear stability properties of closed vortex filaments}, Nonlinearity, 24
  (2011), p.~3555.

\bibitem{chang2000k}
{\sc N.~Chang, J.~Shatah, and K.~Uhlenbeck}, {\em Schr\"odinger maps}, Comm
  Pure Appl. Math, 53 (2000), pp.~590--602.

\bibitem{de2009numerical}
{\sc F.~de~la Hoz, C.~J. Garcia-Cervera, and L.~Vega}, {\em {A numerical study
  of the self-similar solutions of the Schr\"odinger map}}, SIAM J. Appl.
  Math., 70 (2009), pp.~1047--1077.

\bibitem{de2022vortex}
{\sc F.~de~la Hoz, S.~Kumar, and L.~Vega}, {\em Vortex filament equation for a
  regular polygon in the hyperbolic plane}, J. Nonlinear Sci., 32 (2022),
  pp.~1--34.

\bibitem{de2014vortex}
{\sc F.~De~la Hoz and L.~Vega}, {\em Vortex filament equation for a regular
  polygon}, Nonlinearity, 27 (2014), p.~3031.

\bibitem{de2020landau}
{\sc A.~de~Laire}, {\em The Landau-Lifshitz equation and related models}, PhD
  thesis, Universit{\'e} de Lille, 2020.

\bibitem{dingwang2001}
{\sc W.~Ding and Y.~Wang}, {\em {Local Schr\"odinger flow into K\"ahler
  manifolds}}, Sci. Sinica Ser. A, 44 (2001), pp.~1446--1464.

\bibitem{weinan2001numerical}
{\sc W.~E and X.-P. Wang}, {\em {Numerical methods for the Landau--Lifshitz
  equation}}, SIAM J. Numer. Anal.,  (2001), pp.~1647--1665.

\bibitem{faou2012geometric}
{\sc E.~Faou}, {\em {Geometric numerical integration and Schr{\"o}dinger
  equations}}, vol.~15, European Mathematical Society, 2012.

\bibitem{faou2004energy}
{\sc E.~Faou, E.~Hairer, and T.-L. Pham}, {\em Energy conservation with
  non-symplectic methods: examples and counter-examples}, BIT, 44 (2004),
  pp.~699--709.

\bibitem{fengmaierhoferschratz23}
{\sc Y.~Feng, G.~Maierhofer, and K.~Schratz}, {\em {Long-time error bounds of
  low-regularity integrators for nonlinear Schr\"odinger equations }}, arXiv
  preprint, arXiv:2302.00383 (2023).

\bibitem{gamayun2019domain}
{\sc O.~Gamayun, Y.~Miao, and E.~Ilievski}, {\em {Domain-wall dynamics in the
  Landau-Lifshitz magnet and the classical-quantum correspondence for spin
  transport}}, Phys. Rev. B, 99 (2019), p.~140301.

\bibitem{hairer1985fast}
{\sc E.~Hairer, C.~Lubich, and M.~Schlichte}, {\em {Fast numerical solution of
  nonlinear Volterra convolution equations}}, SIAM J. Sci. Stat. Comp., 6
  (1985), pp.~532--541.

\bibitem{hairer2013geometric}
{\sc E.~Hairer, C.~Lubich, and G.~Wanner}, {\em Geometric Numerical
  Integration: Structure-Preserving Algorithms for Ordinary Differential
  Equations}, Springer, 2013.

\bibitem{hasimoto1972soliton}
{\sc H.~Hasimoto}, {\em A soliton on a vortex filament}, J. Fluid. Mech., 51
  (1972), pp.~477--485.

\bibitem{hochbruck_ostermann_2010}
{\sc M.~Hochbruck and A.~Ostermann}, {\em Exponential integrators}, Acta
  Numer., 19 (2010), pp.~209--286.

\bibitem{iserles_munthe-kaas_norsett_zanna_2000}
{\sc A.~Iserles, H.~Z. Munthe-Kaas, S.~P. Nørsett, and A.~Zanna}, {\em
  Lie-group methods}, Acta Numer., 9 (2000), pp.~215--365.

\bibitem{iserles1999solution}
{\sc A.~Iserles and S.~P. N{\o}rsett}, {\em {On the solution of linear
  differential equations in Lie groups}}, Philos. T. Roy. Soc. A, 357 (1999),
  pp.~983--1019.

\bibitem{ISERLES2001379}
{\sc A.~Iserles, S.~Nørsett, and A.~Rasmussen}, {\em {Time symmetry and
  high-order Magnus methods}}, Appl. Numer. Math., 39 (2001), pp.~379--401.
\newblock Themes in Geometric Integration.

\bibitem{jerrard_smets_2012}
{\sc R.~L. Jerrard and D.~Smets}, {\em On {Schr\"odinger} maps from
  {$\mathbb{T}^1$ to~$\mathbb{S}^2$}}, Ann. Sci. de l'Ecole Norm. Sup\'erieure,
  Ser. 4, 45 (2012), pp.~637--680.

\bibitem{jin_markowich_sparber_2011}
{\sc S.~Jin, P.~Markowich, and C.~Sparber}, {\em {Mathematical and
  computational methods for semiclassical Schr\"odinger equations}}, Acta
  Numer., 20 (2011), pp.~121--209.

\bibitem{koch2013error}
{\sc O.~Koch, C.~Neuhauser, and M.~Thalhammer}, {\em {Error analysis of
  high-order splitting methods for nonlinear evolutionary Schr{\"o}dinger
  equations and application to the MCTDHF equations in electron dynamics}},
  ESAIM-Math. Model. Num., 47 (2013), pp.~1265--1286.

\bibitem{koiso1996vortex}
{\sc N.~Koiso}, {\em {Vortex filament equation and semilinear Schr{\"o}dinger
  equation, Nonlinear Waves}}, tech. rep., Hokkaido University Technical Report
  Series in Mathematics 43, 1996.

\bibitem{koiso97}
{\sc N.~Koiso}, {\em {The vortex filament equation and a semilinear
  Schr\"odinger equation in a Hermitian symmetric space}}, Osaka Journal of
  Mathematics, 34 (1997), pp.~199 -- 214.

\bibitem{LAFORTUNE2013766}
{\sc S.~Lafortune}, {\em Stability of solitons on vortex filaments}, Phys.
  Lett. A, 377 (2013), pp.~766--769.

\bibitem{landau1935theory}
{\sc L.~Landau and E.~Lifshitz}, {\em On the theory of the dispersion of
  magnetic permeability in ferromagnetic bodies}, Phys. Z. Sowjetunion, 8
  (1935), pp.~153--169.

\bibitem{lubich2008splitting}
{\sc C.~Lubich}, {\em {On splitting methods for Schr{\"o}dinger-Poisson and
  cubic nonlinear Schr{\"o}dinger equations}}, Math. Comput., 77 (2008),
  pp.~2141--2153.

\bibitem{magnus1954exponential}
{\sc W.~Magnus}, {\em On the exponential solution of differential equations for
  a linear operator}, Commun. Pur. Appl. Math., 7 (1954), pp.~649--673.

\bibitem{maierhoferschratz23}
{\sc G.~Maierhofer and K.~Schratz}, {\em {Bridging the gap: symplecticity and
  low regularity in Runge-Kutta resonance-based schemes}}, arXiv preprint,
  arXiv:2205.05024 (2023).

\bibitem{mcgahagan2004some}
{\sc H.~McGahagan}, {\em {Some existence and uniqueness results for
  Schr{\"o}dinger maps and Landau--Lifshitz--Maxwell equations}}, PhD thesis,
  New York University, 2004.

\bibitem{mcgahagan2007approximation}
\leavevmode\vrule height 2pt depth -1.6pt width 23pt, {\em {An approximation
  scheme for Schr{\"o}dinger maps}}, Commun. Part. Diff. Eq., 32 (2007),
  pp.~375--400.

\bibitem{mclachlan_quispel_2002}
{\sc R.~I. McLachlan and G.~R.~W. Quispel}, {\em Splitting methods}, Acta
  Numer., 11 (2002), pp.~341--434.

\bibitem{misguich2019domain}
{\sc G.~Misguich, N.~Pavloff, and V.~Pasquier}, {\em {Domain wall problem in
  the quantum XXZ chain and semiclassical behavior close to the isotropic
  point}}, SciPost Phys., 7 (2019), p.~025.

\bibitem{nahmod2007schrodinger}
{\sc A.~Nahmod, J.~Shatah, L.~Vega, and C.~Zeng}, {\em Schr{\"o}dinger maps and
  their associated frame systems}, Int. Math. Res. Notices, 2007 (2007),
  pp.~rnm088--rnm088.

\bibitem{nishiyama1994solvability}
{\sc T.~Nishiyama and A.~Tani}, {\em Solvability of the localized induction
  equation for vortex motion}, Commun. Math. Phys., 162 (1994), pp.~433--445.

\bibitem{nishiyama1996initial}
\leavevmode\vrule height 2pt depth -1.6pt width 23pt, {\em Initial and
  initial-boundary value problems for a vortex filament with or without axial
  flow}, SIAM J. Math. Anal., 27 (1996), pp.~1015--1023.

\bibitem{ostermann2021error}
{\sc A.~Ostermann, F.~Rousset, and K.~Schratz}, {\em {Error estimates of a
  Fourier integrator for the cubic Schr{\"o}dinger equation at low
  regularity}}, Found. Comput. Math., 21 (2021), pp.~725--765.

\bibitem{ostermann2018low}
{\sc A.~Ostermann and K.~Schratz}, {\em {Low regularity exponential-type
  integrators for semilinear Schr{\"o}dinger equations}}, Found. Comput. Math.,
  18 (2018), pp.~731--755.

\bibitem{rodnianski2009global}
{\sc I.~Rodnianski, Y.~Rubinstein, and G.~Staffilani}, {\em {On the global
  well-posedness of the one-dimensional Schr{\"o}dinger map flow}}, Anal. PDE,
  2 (2009), pp.~187--209.

\bibitem{rousset2021general}
{\sc F.~Rousset and K.~Schratz}, {\em A general framework of low regularity
  integrators}, SIAM J. Numer. Anal., 59 (2021), pp.~1735--1768.

\bibitem{strang1968construction}
{\sc G.~Strang}, {\em On the construction and comparison of difference
  schemes}, SIAM J. Numer. Anal., 5 (1968), pp.~506--517.

\bibitem{sulem1986continuous}
{\sc P.-L. Sulem, C.~Sulem, and C.~Bardos}, {\em On the continuous limit for a
  system of classical spins}, Commun. Math. Phys., 107 (1986), pp.~431--454.

\bibitem{tani1997solvability}
{\sc A.~Tani and T.~Nishiyama}, {\em Solvability of equations for motion of a
  vortex filament with or without axial flow}, Publ. Res. I. Math. Sci., 33
  (1997), pp.~509--526.

\bibitem{thalhammer2008high}
{\sc M.~Thalhammer}, {\em {High-order exponential operator splitting methods
  for time-dependent Schr{\"o}dinger equations}}, SIAM J. Numer. Anal., 46
  (2008), pp.~2022--2038.

\bibitem{xie2020second}
{\sc C.~Xie, C.~J. Garc{\'\i}a-Cervera, C.~Wang, Z.~Zhou, and J.~Chen}, {\em
  Second-order semi-implicit projection methods for micromagnetics
  simulations}, J. Comput. Phys., 404 (2020), p.~109104.

\bibitem{ZhouGuo1984}
{\sc Y.~Zhou and B.~Guo}, {\em {Existence of weak solution for boundary
  problems of systems of ferro-magnetic chain}}, Sci. Sinica Ser. A, 27 (1984).

\end{thebibliography}

\end{document}